\chardef\bslash=`\\
\newtheorem{theorem}[subsection]{Theorem}
\newtheorem{lemma}[subsection]{Lemma}
\newtheorem{introthm}{Theorem}
\newtheorem{introcor}[introthm]{Corollary}
\newtheorem{proposition}[subsection]{Proposition}
\newtheorem{defn}[subsection]{Definition}
\theoremstyle{remark}
\numberwithin{equation}{subsection}
\newif\iffinalrun
  \newcommand{\need}[1]{}
  \newcommand{\mar}[1]{}
  \newcommand{\need}[1]{{\tiny *** #1}}
  \newcommand{\mar}[1]{\marginpar{\raggedright\tiny  #1}}\fi
\renewcommand\mathbb{\mathbf}
\newcommand{\rec}{{\operatorname{rec}}}
\newcommand{\rbar}{\overline{r}}
\newcommand{\wv}{{\widetilde{v}}}
\renewcommand{\ell}{l}
\def\PSL{\mathrm{PSL}}
\def\PGL{\mathrm{PGL}}
\def\Iw{\mathrm{Iw}}
\newcommand{\ad}{\operatorname{ad}}
\newcommand{\diag}{\operatorname{diag}}
\newcommand{\tr}{\operatorname{tr}}
\newcommand{\Sp}{\operatorname{Sp}}
\newcommand{\A}{\mathbf{A}}
\newcommand{\bA}{\ensuremath{\mathbf{A}}}
\newcommand{\bC}{\ensuremath{\mathbf{C}}}
\newcommand{\bF}{\ensuremath{\mathbf{F}}}
\newcommand{\bG}{\ensuremath{\mathbf{G}}}
\newcommand{\bQ}{\ensuremath{\mathbf{Q}}}
\newcommand{\Q}{\QQ}
\newcommand{\QQ}{{\mathbb Q}}
\newcommand{\bT}{\ensuremath{\mathbf{T}}}
\newcommand{\bZ}{\ensuremath{\mathbf{Z}}}
\newcommand{\bbZ}{\ensuremath{\mathbf{Z}}}
\newcommand{\bbQ}{\ensuremath{\mathbf{Q}}}
\newcommand{\cC}{{\mathcal C}}
\newcommand{\cH}{{\mathcal H}}
\newcommand{\cO}{{\mathcal O}}
\newcommand{\cT}{{\mathcal T}}
\newcommand{\ffrm}{{\mathfrak m}}
\newcommand{\frb}{{\mathfrak b}}
\newcommand{\frakp}{\mathfrak{p}}
\newcommand{\p}{\frakp}
\newcommand{\fra}{{\mathfrak a}}
\newcommand{\C}{\mathbf{C}}
\newcommand{\Qp}{\Q_p}
\DeclareMathOperator{\Lift}{Lift}
\DeclareMathOperator{\Def}{Def}
\DeclareMathOperator{\Gal}{Gal}
\newcommand{\GL}{\mathrm{GL}}
\DeclareMathOperator{\Hom}{Hom}
\DeclareMathOperator{\Ind}{Ind}
\DeclareMathOperator{\SL}{SL}
\DeclareMathOperator{\Spec}{Spec}
\DeclareMathOperator{\Supp}{Supp}
\DeclareMathOperator{\Sym}{Sym}
\newcommand{\Frob}{\mathrm{Frob}}
\newcommand{\Art}{{\operatorname{Art}}}
\newcommand{\Res}{\operatorname{Res}}
\newcommand{\doubleslash}{/\kern-0.2em{/}}
\begin{document}

\author{James Newton and Jack A. Thorne}
\title[Symmetric power functoriality, II]{Symmetric power functoriality for holomorphic modular forms, II}
\begin{abstract} Let $f$ be a cuspidal Hecke eigenform without complex multiplication. We prove the automorphy of the symmetric power lifting $\Sym^n f$ for every $n \geq 1$.
\end{abstract}
\maketitle
\setcounter{tocdepth}{1}
\tableofcontents
\section{Introduction}
Let $F$ be a number field, and let $\pi$ be a cuspidal automorphic representation of $\GL_2(\bA_F)$. Langlands's functoriality principle predicts the existence, for any $n \geq 1$, of an automorphic representation $\Sym^n \pi$ of $\GL_{n+1}(\bA_F)$, characterized by the requirement that for any place $v$ of $F$, the Langlands parameter of $(\Sym^n \pi)_v$ is the image  of the Langlands parameter of $\pi_v$ under the $n$th symmetric power $\Sym^n : \GL_2 \to \GL_{n+1}$ of the standard  representation of $\GL_2$.  

For a more detailed  discussion of the context surrounding this problem, including known results by other authors, we refer the reader to the introduction of \cite{New19b},  of  which this paper  is a continuation.  In that paper we studied the problem of symmetric power functoriality in the case that $F = \bQ$ and $\pi$ is regular algebraic (in which case $\pi$ corresponds to a twist of a cuspidal Hecke eigenform $f$ of weight $k \geq 2$, cf.~\cite[\S 3]{Gel75}). We established the existence of the symmetric power liftings $\Sym^n \pi$ under the assumption that there is no prime $p$ such that $\pi_p$ is supercuspidal. (This includes the case that $f$ has level $\SL_2(\bZ)$.) 

In  this paper we remove this assumption, proving the following theorem:
\begin{introthm}\label{introthm_sympowers}
	Let $\pi$ be a regular algebraic, cuspidal automorphic representation of $\GL_2(\bA_\bQ)$. Suppose that $\pi$ is non-CM. Then for each integer $n \geq 1$, $\Sym^{n} \pi$ exists, as a regular algebraic, cuspidal automorphic representation of $\GL_{n+1}(\bA_\bQ)$.
\end{introthm}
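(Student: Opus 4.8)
The plan is to deduce the theorem from the main result of the first paper of this series — automorphy of $\Sym^n \pi$ when $\pi$ has no supercuspidal local component, whose proof proceeds by induction on $n$, bootstrapping from the classical low-degree cases via Clebsch--Gordan decompositions and automorphy lifting for the resulting tensor products — by means of a solvable base change that removes the supercuspidal components, followed by cyclic descent. The price is that one must first upgrade the first paper's theorem, and the automorphy lifting package underlying it, from $\bQ$ to an arbitrary totally real base field (and hence, via a quadratic CM extension, to the setting in which the unitary-group Taylor--Wiles machinery operates); this generalization, rather than any single new idea, is where most of the effort goes. The one genuinely new local ingredient is the deformation theory at a prime carrying a ``$\Sym^n$ of a supercuspidal type'' for $\GL_{n+1}$, which the first paper avoided, but which — after the base change — reappears only as a ``$\Sym^n$ of a principal-series or Steinberg type'', already within reach of the methods in place.

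In more detail, let $f$ be the newform attached to $\pi$ up to twist and $\rho_{f,p}$ its $p$-adic Galois representation. Every local component $\pi_q$ becomes, after restriction to a suitable solvable (in fact metabelian) extension of $\bQ_q$, a subquotient of a principal series; realizing all of these local extensions inside a single solvable, totally real number field $F/\bQ$ by Grunwald--Wang produces a base change $\pi_F$ that is regular algebraic, cuspidal, has no supercuspidal local component, and is still non-CM — the last because $\rho_{f,p}(G_\bQ) \supseteq \SL_2(\Z_p)$ for $p$ outside a finite set, so that $\rho_{f,p}(G_F)$ remains Zariski-dense in $\SL_2$. The extended first paper then gives that $\Sym^n \pi_F$ is cuspidal automorphic over $F$; since its Galois representation $\Sym^n \rho_{f,p}|_{G_F}$ visibly extends to $G_\bQ$, it is $\Gal(F/\bQ)$-invariant, and descending step by step down the solvable tower $F/\bQ$ by cyclic base change (Arthur--Clozel) yields an automorphic representation of $\GL_{n+1}(\bA_\bQ)$ whose Galois representation, pinned down at the unramified places and hence everywhere, is $\Sym^n \rho_{f,p}$. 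Zariski-density again forces $\Sym^n \rho_{f,p}$ to be irreducible, hence this representation is cuspidal, and by local-global compatibility it is $\Sym^n \pi$.

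The principal obstacle, as indicated, is the base-field generalization itself: one must re-establish, over a general totally real field, both the automorphy lifting theorems in the residually enormous-image case — with the local conditions at $p$ and at the ramified primes, including the level-changing and Ihara-avoidance arguments that feed in residual automorphy — and the analytic-continuation / induction-on-$n$ arguments that assemble them into the symmetric power statement. A secondary annoyance is the bookkeeping for the auxiliary prime $p$, which must be chosen simultaneously large relative to $n$ (the residual enormous-image condition for $\Sym^n \barrho_{f,p}$ can fail for $p$ small relative to $n$), unramified and Fontaine--Laffaille for $\pi_F$, and compatible with the base change $F/\bQ$; so this choice is not automatic and must be made with care, possibly after a further mild base change or twist.

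Finally, the solvable descent requires, at each stage, that the symmetric power in question be cuspidal, since the clean form of Arthur--Clozel descent applies to cuspidal representations. This is exactly where the hypothesis that $\pi$ is non-CM is indispensable: for a CM form, $\Sym^n \pi$ is an isobaric sum of lower-dimensional automorphic representations rather than a cuspidal one, the Galois representation $\Sym^n \rho_{f,p}$ is correspondingly reducible, and the final identification of the descended representation with $\Sym^n \pi$ as a cuspidal form would break down.
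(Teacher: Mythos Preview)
Your strategy is fundamentally different from the paper's, and it contains a genuine gap.

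\textbf{The gap.} Your entire argument rests on generalizing \cite{New19b} from $\bQ$ to an arbitrary totally real field $F$, which you describe as ``where most of the effort goes'' but treat as a matter of bookkeeping. It is not. The proof in \cite{New19b} is built around analytic continuation of symmetric-power functoriality along the Coleman--Mazur eigencurve, together with a delicate ``ping-pong'' between irreducible components and a carefully engineered starting point (ultimately tied to level~$1$ over $\bQ$). None of this transports routinely to Hilbert eigenvarieties over a general totally real $F$: the geometry of the boundary, the supply of known automorphic points to seed the continuation, and the level-$1$ input all depend on features specific to $\bQ$. Your sketch of \cite{New19b} as ``induction on $n$ via Clebsch--Gordan and tensor-product automorphy lifting'' is also not what that paper does; that description fits the Kim--Shahidi range $n \leq 4$, not the general case. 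So the step ``apply the generalized first paper over $F$'' is not available, and you have not indicated how to make it available.

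\textbf{What the paper actually does.} The paper never leaves $\bQ$ to kill supercuspidals. Instead it inducts on $|sc(\pi)|$, the number of supercuspidal primes, in the spirit of Khare--Wintenberger's proof of Serre's conjecture. To remove a prime $p \in sc(\pi)$ one must work with a mod~$p$ congruence, and the obstruction you never confront is that for $p \leq n$ the residual representation $\Sym^{n-1}\overline{r}_{\pi,\iota}$ is \emph{reducible}, so the automorphy lifting theorems of \cite{BLGGT} do not apply. The paper's main new input, Theorem~\ref{thm_automorphy_of_symmetric_power}, is an automorphy lifting theorem designed precisely for this situation: one patches not just the $2$-dimensional deformation ring $R$ but also the $n$-dimensional pseudodeformation ring $P$ together with the universal map $P \to R$, and then uses the vanishing of the adjoint Bloch--Kato Selmer group (the main result of \cite{New19a}) to show $\Spec P_\infty$ is regular at the known automorphic point, forcing the image of $\Spec R_\infty$ into the support of the patched module of rank-$n$ forms. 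The induction is then organized via ``seasoned'' representations (Definition~\ref{def_seasoned}) carrying auxiliary good-dihedral and Steinberg primes that guarantee large residual image at each $p \in sc(\pi)$.

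In short: your base-change-and-descend outline is coherent as a wish, but the load-bearing step --- a totally-real-field version of \cite{New19b} --- is not a generalization you can wave through, and the paper's route avoids it entirely by proving a new kind of lifting theorem instead.
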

In the `missing' cases of $\pi$ which are holomorphic limit of discrete series at $\infty$ or CM, the existence of $\Sym^n \pi$ for all $n$ is well known, although of course $\Sym^n\pi$ is usually not cuspidal. The most difficult case of icosahedral weight one eigenforms (\cite[Theorem 6.4]{Kim2004}) requires Kim and Shahidi's results on tensor product and symmetric power functoriality. We provide some details in Appendix \ref{wt1}.

Using the modularity of elliptic curves over $\bQ$ \cite{MR1839918}, we deduce the following corollary:
\begin{introcor}
	Let $E$ be an elliptic curve over $\bQ$ without complex multiplication. Then, for each integer $n \geq 2$, the completed symmetric power $L$-function $\Lambda(\Sym^n E, s)$ as defined in e.g.\ \cite{Dum09}, admits an analytic continuation to the entire complex plane.
\end{introcor}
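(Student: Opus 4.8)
The plan is to deduce this from Theorem~\ref{introthm_sympowers} together with the modularity theorem for elliptic curves over $\bQ$. Given a non-CM elliptic curve $E/\bQ$, modularity \cite{MR1839918} provides a cuspidal Hecke eigenform $f$ of weight $2$ (of level equal to the conductor of $E$ and with trivial nebentypus) such that $L(E,s) = L(f,s)$; equivalently, a regular algebraic, cuspidal automorphic representation $\pi$ of $\GL_2(\bA_\bQ)$ with $\pi_\infty$ a discrete series of weight $2$ and $L(\pi,s) = L(E, s + \tfrac12)$, the shift reflecting the passage to the unitary normalization. Since $E$ has no complex multiplication, the associated $\ell$-adic representation $\rho_{E,\ell}$ is not induced from a character of the Galois group of a quadratic field, so $\pi$ is non-CM in the sense of Theorem~\ref{introthm_sympowers}.

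Next I would apply Theorem~\ref{introthm_sympowers} to $\pi$ and each $n \geq 2$, obtaining a regular algebraic, cuspidal automorphic representation $\Sym^n \pi$ of $\GL_{n+1}(\bA_\bQ)$. The remaining point is to identify the completed $L$-function $\Lambda(\Sym^n E, s)$ of \cite{Dum09} --- which is defined place-by-place from the Galois representation $\Sym^n H^1_{\mathrm{et}}(E_{\Qbar}, \bQ_\ell)$, with Euler factors at the bad primes read off from the associated Weil--Deligne representations and an archimedean factor prescribed by the Hodge structure --- with the completed standard $L$-function of $\Sym^n \pi$. Using local--global compatibility for $\pi$ (so that the Weil--Deligne parameter of $\pi_p$ matches $\WD(\rho_{E,\ell}|_{\Gal(\Qpbar/\Qp)})$ at every prime $p$, and similarly at $\infty$), together with the defining property of the lift $\Sym^n \pi$ --- that its local parameters are the $n$th symmetric powers of those of $\pi$ --- the local $L$- and $\epsilon$-factors on the two sides agree up to the normalization shift, so that $\Lambda(\Sym^n E, s) = \Lambda(\Sym^n \pi, s - \tfrac n2)$.

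Finally, since $n + 1 \geq 3$ and $\Sym^n \pi$ is cuspidal, the Godement--Jacquet theory of principal $L$-functions shows that $s \mapsto \Lambda(\Sym^n \pi, s)$ extends to an entire function of $s \in \CC$ satisfying a functional equation relating $s$ and $1 - s$; translating back along the shift yields the analytic continuation of $\Lambda(\Sym^n E, s)$ to all of $\CC$ (in fact an entire continuation, with functional equation $s \leftrightarrow n + 1 - s$). I expect the only genuine work to be the bookkeeping in the previous paragraph: matching Dummigan's motivic normalizations --- the Euler factors at primes of bad reduction, the precise shape of the $\Gamma$-factor, and the placement of the functional equation --- with the automorphic conventions attached to $\Sym^n \pi$. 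This is routine given local--global compatibility for $\GL_2$; the substantive input, namely the automorphy of $\Sym^n \pi$, is precisely Theorem~\ref{introthm_sympowers}.
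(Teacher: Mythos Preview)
Your proposal is correct and follows exactly the intended route: the paper does not give a separate proof of this corollary, merely indicating that it follows from Theorem~\ref{introthm_sympowers} together with the modularity of elliptic curves over $\bQ$. You have supplied the standard details (the identification of $L$-functions via local--global compatibility and the appeal to Godement--Jacquet for entireness of cuspidal $L$-functions on $\GL_{n+1}$) that the paper leaves implicit.
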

Our strategy to prove Theorem \ref{introthm_sympowers} is inspired by the proof of Serre's conjecture \cite{Kha09}. There one takes as given Serre's conjecture in the level 1 case (i.e. for every prime  number $p$,  the residual modularity of odd irreducible representations 
\[ \overline{\rho}  : \Gal(\overline{\bQ}/\bQ) \to \GL_2(\overline{\bF}_p) \]
 unramified outside $p$), proved in \cite{Kha06}, and hopes to reduce the general case to this one by induction on the number of primes  away from $p$ at which $\overline{\rho}$ is ramified.

Here we associate to any regular algebraic, cuspidal automorphic representation $\pi$ of $\GL_2(\bA_\bQ)$ the set $sc(\pi)$ of primes $p$  such that $\pi_p$ is supercuspidal. Fixing $n \geq 1$, we prove the existence  of $\Sym^n \pi$ by induction on the cardinality of the set $sc(\pi)$, the case $|sc(\pi)| = 0$ being exactly the main result of \cite{New19b}.

Our induction argument uses congruences between automorphic representations. If $p$ is a prime and $\iota : \overline{\bQ}_p \to \bC$ is an isomorphism, then there is an associated Galois representation 
\[ r_{\pi, \iota}: \Gal(\overline{\bQ}/\bQ) \to \GL_2(\overline{\bQ}_p) \]
 and its mod $p$ reduction 
 \[ \overline{r}_{\pi, \iota} : \Gal(\overline{\bQ}/\bQ) \to \GL_2(\overline{\bF}_p). \]
  If $\pi'$ is another regular algebraic, cuspidal automorphic representation of $\GL_2(\bA_\bQ)$, and there is an isomorphism 
\[ \overline{r}_{\pi, \iota} \cong \overline{r}_{\pi', \iota}\]
(in other words, a congruence modulo $p$ between $\pi$ and $\pi'$), then passage to symmetric powers gives an isomorphism 
\[ \Sym^n \overline{r}_{\pi, \iota} \cong \Sym^n \overline{r}_{\pi', \iota}. \]
If $\Sym^n \pi'$ is known to exist, and the image of the representation  $\Sym^n \overline{r}_{\pi',  \iota}$ is sufficiently non-degenerate (for example, irreducible), then automorphy lifting theorems (such as those proved in \cite{BLGGT}) can be used to deduce the automorphy of $\Sym^n r_{\pi, \iota}$, hence the existence of $\Sym^n \pi$. 

If $p$ is a prime such that $\pi_p$ is supercuspidal, it may be possible to  choose $\pi'$ so that $sc(\pi') = sc(\pi) - \{ p  \}$, opening the way to an  induction argument. This idea of `killing ramification' plays a significant role in \cite{Kha09}.

The difficulty in applying this approach here is that if $p \leq n$ then the representation $\Sym^n \overline{r}_{\pi, \iota}$ is never irreducible, so the automorphy lifting theorems proved in \cite{BLGGT} do not apply. (The automorphy lifting theorems for residually reducible representations proved in \cite{All19} apply only for ordinary representations, a possibility which is ruled out if $\pi_p$ is supercuspidal.) This approach might perhaps yield the existence of $\Sym^n \pi$ when $p > n$ for every $p \in sc(\pi)$, but to get a result like Theorem \ref{introthm_sympowers} a new idea is required.

Here we prove a new kind of automorphy lifting theorem, Theorem \ref{thm_automorphy_of_symmetric_power}, specially tailored to the problem of symmetric powers (although we hope that these ideas will also be useful for other cases of Langlands functoriality). We consider the morphism $P \to R$, where $R$ is the  universal deformation ring of the (supposed irreducible) representation $\overline{r}_{\pi, \iota}$ and $P$ is the universal pseudodeformation ring of the pseudocharacter associated to the symmetric power $\Sym^n \overline{r}_{\pi, \iota}$; the morphism $P \to R$ is the universal one classifying the pseudocharacter of $\Sym^n$ of the universal deformation of $\overline{r}_{\pi, \iota}$. A version of the Taylor--Wiles--Kisin patching argument upgrades this to a commutative diagram
\[ \xymatrix{ P_\infty \ar[d] \ar[r] &  R_\infty \ar[d] \\ P \ar[r] & R, } \]
where $P_\infty, R_\infty$ are `patched deformation rings' and the vertical arrows are surjections. Since $\overline{r}_{\pi, \iota}$ is assumed to be irreducible, the arguments of \cite{Kis09, MR2551765} show that $R_\infty$ is a domain which acts faithfully on a space of patched (rank-2) modular forms. 

The essential additional ingredient is the main result of \cite{New19a}, which shows that $\Spec P$ is regular (of dimension 0) at the point corresponding to the pseudocharacter of the representation $\Sym^n r_{\pi', \iota}$; this in turn implies that $\Spec P_\infty$ is regular at the image of this point in $\Spec P_\infty$, and allows us to deduce that the image of $\Spec R_\infty \to \Spec P_\infty$ is contained in the support of a space of patched (rank-$(n+1)$) modular forms, leading to a proof of Theorem \ref{thm_automorphy_of_symmetric_power}. Our a priori  knowledge about the ring $P$ obviates the need to kill the dual Selmer group of $\Sym^n \overline{r}_{\pi, \iota}$.

To actually prove Theorem \ref{introthm_sympowers}, we combine Theorem \ref{thm_automorphy_of_symmetric_power} with a modified version of the `killing ramification' technique of \cite{Kha09}, based on a variation of the notion of `good dihedral' representation introduced in that paper. This is not quite routine since we need our `good dihedral' automorphic representations $\pi$ to have the property that, if $q$ is the good dihedral prime, then there is an isomorphism $\iota_q : \overline{\bQ}_q \to \bC$ such that $\overline{r}_{\pi, \iota_q}$ has large image. We achieve this by introducing Steinberg type ramification at another auxiliary prime $r$, which is acceptable since the presence of $r$ does not affect the set $sc(\pi)$. We call an automorphic representation $\pi$ that comes equipped with the requisite auxiliary primes `seasoned' (see Definition \ref{def_seasoned}). 

\textbf{Acknowledgements.}  J.T.'s work received funding from the European 
Research Council (ERC) under the European Union's Horizon 2020 research and 
innovation programme (grant agreement No 714405). We thank Toby Gee and an anonymous referee for comments on earlier versions of this manuscript.

\textbf{Notation.}
If $F$ is a perfect field, we generally fix an algebraic closure $\overline{F} / F$ and write $G_F$ for the absolute Galois group of $F$ with respect to this choice. When the characteristic of $F$ is not equal to $p$, we write $\epsilon : G_F \to \bbZ_p^\times$ for the $p$-adic cyclotomic character. We write $\zeta_n \in \overline{F}$ for a fixed choice of primitive $n^\text{th}$ root of unity (when this exists). If $F$ is a number field, then we will also fix embeddings $\overline{F} \to \overline{F}_v$ extending the map $F\to F_v$ for each place $v$ of $F$; this choice determines a homomorphism $G_{F_v} \to G_F$. When $v$ is a finite place, we will write $W_{F_v} \subset G_{F_v}$ for the Weil group, $\cO_{F_v} \subset F_v$ for the valuation ring, $\varpi_v \in \cO_{F_v}$ for a fixed choice of uniformizer, $\Frob_v \in G_{F_v}$ for a fixed choice of (geometric) Frobenius lift, $k(v) = \cO_{F_v} / (\varpi_v)$ for the residue field, and $q_v = \# k(v)$ for the cardinality of the residue field. If $R$ is a ring and  $\alpha \in R^\times$, then we write $\operatorname{ur}_\alpha : W_{F_v} \to R^\times$ for the unramified character which sends $\Frob_v$ to $\alpha$. When $v$ is a real place, we write $c_v \in G_{F_v}$ for complex conjugation. If $S$ is a finite set of finite places of $F$ then we write $F_S / F$ for the maximal subextension of $\overline{F}$ unramified outside $S$ and $G_{F, S} = \Gal(F_S / F)$. 

If $p$ is a prime, then we call a coefficient field a finite extension $E / \bbQ_p$ contained inside our fixed algebraic closure $\overline{\bbQ}_p$, and write $\cO$ for the valuation ring of $E$, $\varpi \in \cO$ for a fixed choice of uniformizer, and $k = \cO / (\varpi)$ for the residue field. We write $\cC_\cO$ for the category of complete Noetherian local $\cO$-algebras with residue field $k$. If $G$ is a profinite group and $\rho : G \to \GL_n(\overline{\bQ}_p)$ is a continuous representation, then we write $\overline{\rho} : G \to \GL_n(\overline{\bF}_p)$ for the associated semisimple residual representation (which is well-defined up to conjugacy). If $F$ is a number field, $v$ is a finite place of $F$, and $\rho, \rho' : G_{F_v} \to \GL_n(\cO_{\overline{\bQ}_p})$ are continuous representations, which are potentially crystalline if $v | p$, then we use the notation $\rho \sim \rho'$ established in \cite[\S 1]{BLGGT} (which indicates that these two representations define points on a common component of a suitable deformation ring).

We write $T_n \subset B_n \subset \GL_n$ for the standard diagonal maximal torus and upper-triangular Borel subgroup. Let $K$ be a non-archimedean characteristic $0$ local field, 
and let $\Omega$ 
be an algebraically 
closed field of characteristic 0. If $\rho : G_K \to \GL_n(\overline{\bbQ}_p)$ is a continuous 
representation (which is de Rham if $p$ equals the residue characteristic of 
$K$), then we write $\mathrm{WD}(\rho) = (r, N)$ for the associated 
Weil--Deligne representation of $\rho$, and $\mathrm{WD}(\rho)^{F-ss}$ for its 
Frobenius semisimplification. We use the cohomological normalisation of 
class field theory: it is the isomorphism $\Art_K: K^\times \to W_K^{ab}$ which 
sends uniformizers to geometric Frobenius elements. When $\Omega = \bC$, we 
have the local Langlands correspondence $\rec_{K}$ for $\GL_n(K)$: a bijection 
between the sets of isomorphism classes of irreducible, admissible 
$\bC[\GL_n(K)]$-modules and Frobenius-semisimple Weil--Deligne representations 
over $\bC$ of rank $n$. In general, we have the Tate normalisation of the local 
Langlands correspondence 
for $\GL_n$ as described in \cite[\S 
2.1]{Clo14}. When $\Omega = 
\bC$, we have $\rec^T_K(\pi) = \rec_K(\pi \otimes | \cdot |^{(1-n)/2})$. 

If $F$ is a number field and $\chi : F^\times \backslash \A_F^\times \to \bC^\times$ is a Hecke character of type $A_0$ (equivalently: algebraic), then for any isomorphism $\iota : \overline{\bQ}_p \to \bC$ there is a continuous character $r_{\chi, \iota} : G_F \to \overline{\bQ}_p^\times$ which is de Rham at the places $v | p$ of $F$ and such that for each finite place $v$ of $F$, $\mathrm{WD}(r_{\chi, \iota}) \circ \Art_{F_v} = \iota^{-1} \chi|_{F_v^\times}$. Conversely, if $\chi' : G_F \to \overline{\bQ}_p^\times$ is a continuous character which is de Rham and unramified at all but finitely many places, then there exists a Hecke character $\chi : F^\times \backslash \A_F^\times \to \bC^\times$ of type $A_0$ such that $r_{\chi, \iota} = \chi'$. In this situation we abuse notation slightly by writing $\chi = \iota \chi'$.

If $F$ is a CM or totally real number field and $\pi$ is an automorphic representation of $\GL_n(\A_F)$, we say that $\pi$ is regular algebraic if $\pi_\infty$ has the same infinitesimal character as an irreducible algebraic representation $W$ of $(\Res_{F/ \bQ} \GL_n)_\bC$. 

If $\pi$ is cuspidal, regular algebraic, and polarizable, in the sense of \cite{BLGGT}, then for any isomorphism $\iota : \overline{\bQ}_p \to \bC$ there exists a continuous, semisimple representation $r_{\pi, \iota} : G_F \to \GL_n(\overline{\bQ}_p)$ such that for each finite place $v$ of $F$, $\mathrm{WD}(r_{\pi, \iota}|_{G_{F_v}})^{F-ss} \cong \rec_{F_v}^T(\iota^{-1} \pi_v)$ (see e.g.\ \cite{Caraianilp}). (When $n = 1$, this is compatible with our existing notation.) We use the convention that the Hodge--Tate weight of the cyclotomic character is $-1$. 

We use special terminology in the case $n = 2$: if $k \geq 2$ is an integer, we say that $\pi$  has weight $k$ if we can take $W = (\otimes_{\tau \in \Hom(F, \bC)} \Sym^{k-2} \bC^2)^\vee$. (If $F$ is totally real, then the cuspidal automorphic representations of weight $k$ are those which are associated to cuspidal Hilbert modular forms of parallel weight $k$.) In this case the Hodge--Tate weights of $r_{\pi, \iota}$ with respect to any embedding $\tau :F \to \overline{\bQ}_p$ are $\{0, k-1\}$ and the character $\epsilon^{k-1} \det r_{\pi, \iota}$ has finite order.

If $F$ is a number field, $G$ is a reductive group over $F$, $v$ is a finite place of $F$, and $U_v$ is an open compact subgroup of $G(F_v)$, then we write $\cH(G(F_v), U_v)$ for the convolution algebra of compactly supported $U_v$-biinvariant functions $f : G(F_v) \to \bZ$ (convolution defined with respect to the Haar measure on $G(F_v)$ which gives $U_v$ volume 1). Then $\cH(G(F_v), U_v)$ is a free $\bZ$-module, with basis given by the characteristic functions $[U_v g_v U_v]$ of double cosets for $g_v \in U_v \backslash G(F_v) / U_v$.

If $1 \leq i \leq n$, let $\alpha_{\varpi_v, i} = \diag(\varpi_v, \dots, \varpi_v, 1, \dots, 1) \in \GL_n(F_v)$ (where there are $i$ occurrences of $\varpi_v$ on the diagonal). We define 
\[ T_v^{(i)} = [\GL_n(\cO_{F_v}) \alpha_{\varpi_v, i} \GL_n(\cO_{F_v})] \in \cH(\GL_n(F_v), \GL_n(\cO_{F_v})). \]
We write $\Iw_v \subset \GL_n(\cO_{F_v})$ for the standard Iwahori subgroup (elements which are upper-triangular modulo $\varpi_v$) and $\Iw_{v, 1} \subset \Iw_v$ for the kernel of the natural map $\Iw_v \to (k(v)^\times)^n$ given by reduction modulo $\varpi_v$, then projection to the diagonal. If $U_v \subset \Iw_v$ is a subgroup containing $\Iw_{v, 1}$, and $1 \leq i \leq n$, then we define
\[ U_{\varpi_v}^{(i)} = [U_v \alpha_{\varpi_v, i} U_v] \in \cH(\GL_n(F_v), U_v). \]

\section{An automorphy lifting theorem for symmetric power representations}\label{sec_ALT_for_sym_powers} 

Let $p$ be a prime and let $F$ be a totally real field. Fix an isomorphism $\iota : \overline{\bQ}_p \to \bC$. Let $n \geq 1$. Let $\pi$ be a regular algebraic, cuspidal automorphic representation of $\GL_2(\bA_F)$ satisfying the following conditions:
\begin{itemize}
\item $\pi$ has weight 2 and is non-CM.
\item For each place $v | p$ of $F$, $r_{\pi, \iota}|_{G_{F_v}}$ is not ordinary, in the sense of \cite[ \S 5.1]{Tho16}. Note that, together with the assumption that $\pi$ has weight 2, this implies that $r_{\pi, \iota}|_{G_{F_v}}$ is potentially Barsotti--Tate.
\item Let $\operatorname{Proj} \overline{r}_{\pi, \iota} : G_F \to \PGL_2(\overline{\bF}_p)$ denote the projective representation associated to $\overline{r}_{\pi, \iota}$. Then there exists $a \geq 1$ such that $p^a > \max(5, 2n-1)$ and there is a sandwich
\[ \PSL_2(\bF_{p^a}) \subset \operatorname{Proj} \overline{r}_{\pi, \iota}(G_F) \subset \PGL_2(\bF_{p^a}), \]
up to conjugacy in $\PGL_2(\overline{\bF}_p)$.
\end{itemize}
We impose the final condition to ensure that we can choose Taylor--Wiles primes such that the image of the corresponding Frobenius element under $\Sym^{n-1}\rbar_{\pi,\iota}$ is regular semisimple. 

The aim of this section is prove the following theorem:
\begin{theorem}\label{thm_automorphy_of_symmetric_power}
Suppose that there exists another regular algebraic, cuspidal automorphic representation $\pi'$ of $\GL_2(\bA_F)$ such that the following conditions are satisfied:
\begin{enumerate}
\item $\pi'$ has weight 2 and is non-CM.
\item For each place $v | p$ of $F$, $r_{\pi', \iota}|_{G_{F_v}}$ is not ordinary.
\item There is an isomorphism $\overline{r}_{\pi', \iota} \cong \overline{r}_{\pi, \iota}$.
\item For each place $v \nmid p$ of $F$, $\pi_v$ is a character twist of the Steinberg representation if and only if $\pi'_v$ is.
\item $\Sym^{n-1} r_{\pi', \iota}$ is automorphic.
\end{enumerate}
Then $\Sym^{n-1} r_{\pi, \iota}$ is automorphic. 
\end{theorem}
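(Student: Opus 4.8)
The plan is to run a Taylor--Wiles--Kisin patching argument comparing the deformation theory of $\overline{r} := \overline{r}_{\pi,\iota} \cong \overline{r}_{\pi',\iota}$ — which is irreducible, by the sandwich hypothesis on $\operatorname{Proj}\overline{r}$ — with the pseudodeformation theory of $\tr\Sym^{n-1}\overline{r}$, and then to feed in the regularity result of \cite{New19a}. First I would make a solvable base change to a totally real field (and fix an auxiliary imaginary CM quadratic extension $\widetilde{F}$), chosen to preserve all running hypotheses — in particular the image of $\operatorname{Proj}\overline{r}$ — and so that $[F:\bQ]$ is even, $\widetilde F/F$ splits at the relevant places, and the local deformation conditions for $\pi$ and $\pi'$ agree at every finite place: at $v\nmid p$ both $\pi_v,\pi'_v$ become unramified or (simultaneously, by hypothesis (4)) twists of Steinberg, while at $v\mid p$ the non-ordinarity hypothesis lets one choose a potentially Barsotti--Tate condition, as in \cite{Kis09, MR2551765}, on a formally smooth (over $\cO[1/p]$) component containing both $r_{\pi,\iota}|_{G_{F_v}}$ and $r_{\pi',\iota}|_{G_{F_v}}$. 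I would then form the universal deformation ring $R$ of $\overline{r}$ for the resulting global problem, with fixed (weight-$2$) determinant, and the universal pseudodeformation ring $P$ of $\tr\Sym^{n-1}\overline{r}$ with the induced local conditions; applying $\Sym^{n-1}$ to the universal deformation gives a canonical map $P\to R$, under which $r_{\pi,\iota},r_{\pi',\iota}$ determine points $x_\pi,x_{\pi'}\in\Spec R$ with images $y_\pi,y_{\pi'}\in\Spec P$.

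Next I would patch the two sides in parallel: the rank-$2$ side uses automorphic forms on a definite quaternion algebra over $F$ computing $\pi,\pi'$ (possible since these are weight $2$, hence discrete series at $\infty$), and the rank-$n$ side uses automorphic forms on a definite unitary group in $n$ variables attached to $\widetilde F/F$, localized at the Hecke maximal ideal cut out by $\tr\Sym^{n-1}\overline{r}$ — this is necessarily only pseudodeformation-theoretic, since $\Sym^{n-1}\overline{r}$ is reducible whenever $p\le n$. Choosing Taylor--Wiles primes $v$ at which $\Sym^{n-1}\overline{r}(\Frob_v)$ is regular semisimple — guaranteed by $\PSL_2(\bF_{p^a})\subseteq\operatorname{Proj}\overline{r}(G_F)$ with $p^a>\max(5,2n-1)$, and a fortiori Taylor--Wiles primes for the rank-$2$ problem — and patching over a common formal power series ring $S_\infty$ over $\cO$, I obtain a commutative square
\[ \xymatrix{ P_\infty \ar[d] \ar[r] & R_\infty \ar[d] \\ P \ar[r] & R, } \]
where $P_\infty$ and $R_\infty$ are obtained from completed tensor products of local deformation rings by adjoining formal variables, the vertical maps are surjections whose kernels are generated by a regular sequence of patching variables (coordinates on $S_\infty$) that cut out, on the patched modules $M_\infty$ over $R_\infty$ and $M^{(n)}_\infty$ over $P_\infty$, the genuine spaces of automorphic forms $M_0$ and $M^{(n)}_0$. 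Since $\overline{r}$ is irreducible, the arguments of \cite{Kis09, MR2551765} show $R_\infty$ is a domain acting faithfully on $M_\infty$.

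The decisive input is the main result of \cite{New19a}: $\Spec P$ is regular of dimension $0$ at $y_{\pi'}$, i.e.\ the local ring $(P)_{y_{\pi'}}$ is a field. Since $\Sym^{n-1}r_{\pi',\iota}$ is automorphic (hypothesis (5)), descent to the unitary group makes $(M^{(n)}_0)_{y_{\pi'}}$ a nonzero module over this field; lifting a basis through the patching variables — a Nakayama argument, using that this sequence is $M^{(n)}_\infty$-regular with quotient $M^{(n)}_0$ — shows $M^{(n)}_\infty$ is free of positive rank over $P_\infty$ in a neighbourhood of the point $\widetilde y_{\pi'}\in\Spec P_\infty$ lying over $y_{\pi'}$. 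Hence $\Supp_{P_\infty}M^{(n)}_\infty$ contains every irreducible component of $\Spec P_\infty$ through $\widetilde y_{\pi'}$. On the other hand, the image of $\Spec R_\infty$ in $\Spec P_\infty$ is irreducible ($R_\infty$ a domain) and contains $\widetilde y_{\pi'}$ (the image of the point of $\Spec R_\infty$ over $x_{\pi'}$), so its closure lies in one such component, and therefore in $\Supp M^{(n)}_\infty$; hence so does the point $\widetilde y_\pi$ over $x_\pi$. Intersecting with $\Spec P\hookrightarrow\Spec P_\infty$ gives $y_\pi\in\Supp M^{(n)}_0$, and since $y_\pi$ is an arithmetic point of the expected weight and type and $\Sym^{n-1}r_{\pi,\iota}$ is irreducible ($\pi$ being non-CM), this membership says exactly that $\Sym^{n-1}r_{\pi,\iota}$ is automorphic, as required.

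The step I expect to be the main obstacle is setting up, and controlling the commutative algebra of, the rank-$n$ pseudodeformation side when $\Sym^{n-1}\overline{r}$ is only residually reducible (the range $p\le n$, precisely where the automorphy lifting theorems of \cite{BLGGT} do not apply): constructing the patched module $M^{(n)}_\infty$ over $P_\infty$, verifying that the patching variables form an $M^{(n)}_\infty$-regular sequence, and arranging the local conditions — especially the potentially Barsotti--Tate conditions at $p$ — so that the numerology of the patching is correct and the dimension-$0$ regularity of $P$ at $y_{\pi'}$ genuinely propagates to local freeness of $M^{(n)}_\infty$ near $\widetilde y_{\pi'}$. This a priori knowledge of $P$ is what replaces, in the residually reducible setting, the usual Galois-cohomological step of annihilating the dual Selmer group of $\Sym^{n-1}\overline{r}$.
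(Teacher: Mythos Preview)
Your overall strategy matches the paper's almost exactly: the parallel patching of the rank-$2$ side (definite quaternion algebra) and the rank-$n$ side (definite unitary group over an auxiliary CM extension), the map $P\to R$ via $\Sym^{n-1}$, the choice of Taylor--Wiles primes with $\Sym^{n-1}\overline r(\Frob_v)$ regular semisimple, and the endgame using the regularity of $P_{(\p')}$ from \cite{New19a} to pin down a unique irreducible component of $\Spec P_\infty$ through $\p'_\infty$ which then contains the image of the irreducible $\Spec R_\infty$. Your Nakayama/local-freeness phrasing is equivalent to the paper's Cohen--Macaulay dimension count.

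The one genuine gap is the case $p=2$, which the hypotheses do not exclude. Your assertion that ``the arguments of \cite{Kis09, MR2551765} show $R_\infty$ is a domain'' is not correct when $p=2$: in that case one cannot kill the dual Selmer group for $\ad^0$ in the usual way (twisting by quadratic characters is invisible modulo $2$), and the patched ring $R_\infty$ typically has several irreducible components. The paper handles this, following \cite{Kha09a, MR2551765}, by patching a larger ring $R'_\infty$ (without fixed global determinant) which \emph{is} a domain, together with a free $\widehat{\bG}_m^\gamma$-action whose $2$-torsion permutes the components of $R_\infty$ transitively. Crucially, this action must also be made to lift compatibly to $P_\infty$ and to both patched modules $H_{D,\infty}$, $H_{G,\infty}$ (on the unitary side via a twisting construction $\chi\mapsto\theta_\chi$ on $U_1$), so that one can translate $\p'_\infty$ to a point $\p''_\infty$ on the same component of $\Spec R_\infty$ as $\p_\infty$ and still know it lies in $\Supp_{P_\infty} H_{G,\infty}$. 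Without this extra structure your argument breaks at the step ``the image of $\Spec R_\infty$ is irreducible''; you should separate out $p=2$ and build in the $\widehat{\bG}_m^\gamma$-equivariance from the start.
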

We begin with a preliminary reduction. Let $E / \bQ_p$ be a coefficient field. After possibly enlarging $E$, we can find conjugates $r, r'$ of $r_{\pi, \iota}, r_{\pi', \iota}$ respectively which take values in $\GL_2(\cO)$. We can also assume that the eigenvalues of each element in the image of $\overline{r}$ lie in $k$. After passage to a soluble totally real extension, we can assume that the following additional conditions are satisfied:
\begin{enumerate}
\setcounter{enumi}{5}
\item $[F : \bQ]$ is even.
\item\label{cond:det} $\det r_{\pi, \iota} = \det r_{\pi', \iota} = \epsilon^{-1}$.
\item For each place $v | p$ of $F$, $\pi_v$ and $\pi'_v$ are unramified.
\item For each finite place $v \nmid p$ of $F$, $\pi_v$ and $\pi'_v$ are Iwahori-spherical. The number of places such that $\pi_v$ is ramified is even.
\item Let $S_p$ denote the set of $p$-adic places of $F$ and $\Sigma$ the set of places $v$ such that $\pi_v$ is ramified.  Let $S = S_p \cup \Sigma$. Then for each $v \in S$, $\overline{r}|_{G_{F_v}}$ is trivial. For each $v \in \Sigma$, $q_v \equiv 1 \text{ mod }p$ and $\pi_v, \pi'_v$ are isomorphic to the Steinberg representation (not just up to twist --- note that condition (\ref{cond:det}) already implies that any such twist is by a quadratic character).
\item There exists an everywhere unramified CM quadratic extension $K / F$, with each place $v \in S$  split in $K$.
\end{enumerate}
Let $\Pi = \pi_K$. Then $\Pi$ is RACSDC (i.e.\ regular algebraic, conjugate self-dual, and cuspidal). We will show that the representation $\Sym^{n-1} r_{\Pi, \iota}$ is automorphic; this will imply Theorem \ref{thm_automorphy_of_symmetric_power}, by soluble descent. We let $\Pi' = \pi'_K$. Then $\Pi'$ is also RACSDC, there is an isomorphism $\overline{r}_{\Pi, \iota} \cong \overline{r}_{\Pi', \iota}$, and $\Sym^{n-1} r_{\Pi', \iota}$ is automorphic. We write $\Pi'_n$ for  the RACSDC automorphic representation of $\GL_n(\bA_{K})$ such that $r_{\Pi'_n, \iota} \cong \Sym^{n-1}  r_{\Pi', \iota}$.

Recall that $\cC_\cO$ denotes the category of complete Noetherian local 
$\cO$-algebras with residue field $k$. If $v$ is a place of $F$, we write $R_v^\square \in \cC_\cO$ for 
the object representing the functor $\Lift_v : \cC_\cO \to \operatorname{Sets}$ 
which associates to $A \in \cC_\cO$ the set of homomorphisms $\widetilde{r} : 
G_{F_v} \to \GL_2(A)$ lifting $\overline{r}|_{G_{F_v}}$ (i.e.\ such that 
$\widetilde{r} \text{ mod }\ffrm_A = \overline{r}$) such  that $\det 
\widetilde{r} = \epsilon^{-1}|_{G_{F_v}}$. We introduce certain quotients of 
$R_v^\square$:
\begin{itemize}
\item If $v \in S_p$, the smallest reduced $\cO$-torsion-free quotient $R_v$ of $R_v^\square$ such that if $F : R_v^\square \to \overline{\bQ}_p$ is a homomorphism such that the pushforward of the universal lifting to $\overline{\bQ}_p$ is crystalline of Hodge--Tate weights $\{0,1 \}$ (with respect to any embedding $F_v \to \overline{\bQ}_p$) and is not  ordinary, then $f$ factors through $R_v$. By \cite[Corollary 2.3.13]{MR2551765}, $R_v$ is a domain of dimension $4 + [F_v : \bQ_p]$.
\item If $v \in \Sigma$, the smallest reduced $\cO$-torsion-free quotient $R_v$ of $R_v^\square$ such that if $f : R_v^\square \to \overline{\bQ}_p$ is a homomorphism such that the pushforward of the universal lifting to $\overline{\bQ}_p$ is an extension of $\epsilon^{-1}$ by the trivial character, then $f$ factors through $R_v$. By \cite[Corollary 2.6.7]{Kis09}, $R_v$ is a domain of dimension 4.
\item If $v \in S_\infty$ (the set of infinite places  of $F$), the quotient $R_v$ of $R_v^\square$ denoted $R_{V_\bF}^{-1, \square}$ in \cite[Proposition 2.5.6]{MR2551765}. Then $R_v$ is a domain of dimension 3.
\end{itemize}
If $Q$ is a finite set of finite places of $F$, disjoint from $S$, then  we write $\operatorname{Def}_Q : \cC_\cO \to \operatorname{Sets}$ for the functor which associates to $A \in \cC_\cO$ the set of $1 + M_2(\ffrm_A)$-conjugacy classes of lifts $\widetilde{r} : G_F \to \GL_2(A)$ of $\overline{r}$ satisfying the following conditions:
\begin{itemize}
\item $\widetilde{r}$ is unramified outside $S \cup Q$, and $\det \widetilde{r} = \epsilon^{-1}$.
\item For each $v \in S \cup S_\infty$, the homomorphism $R_v^\square \to A$ determined by $\widetilde{r}|_{G_{F_v}}$ factors through the quotient $R_v^\square \to R_v$ introduced above.
\end{itemize}
Our assumption on the image of $\overline{r}$ implies that the functor $\operatorname{Def}_Q$ is represented by an object $R_Q \in \cC_\cO$. If $Q$ is empty then we write $R = R_\emptyset$. We also introduce some variants. Let $R_{loc} = \widehat{\otimes}_{v \in S  \cup S_\infty} R_v$. Then $R_{loc}$ is an $\cO$-flat domain of Krull dimension $1 + 3[F: \bQ] + 3 |S|$ (cf.~\cite[Lemma 3.4.12]{Kis09}). We write $\Def_Q^\square : \cC_\cO \to \operatorname{Sets}$ for the functor of $1+M_2(\ffrm_A)$-conjugacy of tuples $(\widetilde{r}, \{ A_v \}_{v \in S \cup S_\infty})$, where $\widetilde{r}$ is as above and $A_v \in 1 + M_2(\ffrm_A)$, and $\gamma \in 1+M_2(\ffrm_A)$ acts by $\gamma \cdot (\widetilde{r}, \{ A_v \}_{v \in S \cup S_\infty}) = (\gamma \widetilde{r} \gamma^{-1}, \{ \gamma A_v \}_{v \in S \cup S_\infty})$. This functor is represented by an object denoted $R_Q^\square \in \cC_\cO$. The tuple of representations $(A_v^{-1} \widetilde{r}|_{G_{F_v}} A_v)_{v \in S \cup S_\infty}$ is independent of the choice of  representative for a given conjugacy class, and the universal  property of $R_v^\square$ determines a homomorphism $R_{loc} \to R_Q^\square$.

The objects in this paragraph will only be used in the case $p = 2$. We write $\Def_Q' : \cC_\cO \to \operatorname{Sets}$ for the functor of $1 + M_2(\ffrm_A)$-conjugacy classes of lifts $\widetilde{r} : G_F \to \GL_2(A)$ of $\overline{r}$ satisfying the following conditions:
\begin{itemize}
\item $\widetilde{r}$ is unramified outside $S \cup Q$, and if $v \in S$ then $\det \widetilde{r}|_{G_{F_v}} = \epsilon^{-1}|_{G_{F_v}}$.
\item For each $v \in S \cup S_\infty$, the homomorphism $R_v^\square \to A$ determined by $\widetilde{r}|_{G_{F_v}}$ factors through the quotient $R_v^\square \to R_v$ introduced above.
\end{itemize}
We write $\Def_Q^{\prime, \square}$ for the functor of $1 + M_2(\ffrm_A)$-conjugacy classes of tuples $(\widetilde{r}, \{ A_v \}_{v \in S \cup S_\infty})$, where $\widetilde{r}$ is as above and $A_v \in 1 + M_2(\ffrm_A)$. Then the functors $\Def_Q'$ and $\Def_Q^{\prime, \square}$ are represented by objects $R_Q'$, $R_Q^{\prime, \square} \in \cC_\cO$ and there is again a natural morphism $R_{loc} \to R_Q^{\prime, \square}$.

Let $t = \det \Sym^{n-1} (r|_{G_K})$ and $t' = \det \Sym^{n-1} (r'|_{G_K})$ denote the group determinants over $\cO$ (in the sense of \cite{chenevier_det}) associated to these two symmetric power representations, and let $\overline{t}$ denote the group determinant over $k$ which is  their common reduction modulo $\varpi$. We introduce the object $P \in \cC_\cO$ which is the quotient $R_S$ of $R_{\overline{t}, S}^{[0, n-1]}$ introduced in \cite[\S 2.19]{New19a}. Informally, $P$ represents the functor of conjugate self-dual group determinants of $G_{K, S}$ lifting $\overline{t}$ which have similitude character $\epsilon^{1-n}$ and are semistable with Hodge--Tate weights in the interval $[0, n-1]$. 
\begin{lemma}\label{lem_sym_power_is_crystalline}
Let $A \in \cC_\cO$ be Artinian, let $v \in S_p$, and let $\widetilde{r} : G_{F_v} \to \GL_2(A)$ be a lift of $\overline{r}$ of determinant $\epsilon^{-1}|_{G_{F_v}}$ such that the associated homomorphism $R_v^\square \to A$ factors through $R_v^\square \to R_v$. View $A^n$ as an $A[G_{F_v}]$-module via the representation $\Sym^{n-1} \widetilde{r} : G_{F_v} \to \GL_n(A)$ (we equip $\Sym^{n-1}A^2$ with its standard ordered basis, cf.~\cite[Definition 3.3.1]{blgg}). Then $A^n$ is isomorphic, as $\bZ_p[G_{F_v}]$-module, to a subquotient of a lattice in a crystalline (in particular, semistable) $\bQ_p[G_{F_v}]$-module with all Hodge--Tate weights in the interval $[0, n-1]$.
\end{lemma}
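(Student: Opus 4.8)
The plan is to reduce the statement to the well-known behaviour of symmetric powers of crystalline representations of Hodge--Tate weights $\{0,1\}$, using the structure of the ring $R_v$ as a domain of the expected dimension. First I would note that since $A$ is Artinian with residue field $k$, and the map $R_v^\square \to A$ classifying $\widetilde{r}|_{G_{F_v}}$ factors through the reduced, $\cO$-torsion-free domain $R_v$, the point of $\Spec R_v$ determined by this map lies in the closure of the $\overline{\bQ}_p$-points corresponding to crystalline lifts of Hodge--Tate weights $\{0,1\}$ (with respect to any embedding $F_v \to \overline{\bQ}_p$) which are not ordinary. Concretely, since $R_v[1/p]$ is regular on a Zariski-dense set of such crystalline points (by the cited \cite[Corollary 2.3.13]{MR2551765}), one can choose a lift $\widetilde{R} : G_{F_v} \to \GL_2(\cO')$ over the ring of integers $\cO'$ of some finite extension $E'/E$, factoring through $R_v$, such that $\widetilde{R} \otimes_{\cO'} E'$ is crystalline of Hodge--Tate weights $\{0,1\}$, and such that $\widetilde{R} \bmod (\text{appropriate ideal})$ recovers $\widetilde{r}$ up to enlarging $A$. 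The precise mechanism is the standard one: because $A$ is Artinian, the map $R_v \to A$ factors through $R_v / \ffrm_{R_v}^N$ for some $N$, and one can lift the $k$-point, then the Artinian point, to a characteristic-zero point of $\Spec R_v$ supported at a minimal prime, i.e. to an honest crystalline representation $\widetilde{R}$ over $\cO'$, together with a surjection of $\cO'[G_{F_v}]$-modules (or at least a sub/quotient relation) $(\cO')^2 \twoheadrightarrow \cdots \to A^2$ compatible with the Galois action after base change.

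The key step is then purely a statement about crystalline representations: if $V$ is a crystalline $E'$-representation of $G_{F_v}$ of dimension $2$ with all Hodge--Tate weights in $\{0,1\}$, then $\Sym^{n-1} V$ is crystalline with all Hodge--Tate weights in $[0,n-1]$. This is immediate from the tensor compatibility of the functor $D_{\cris}$ and the fact that Hodge--Tate weights of $\Sym^{n-1}$ of a representation with weights $h_1,\dots,h_d$ are sums $h_{i_1}+\dots+h_{i_{n-1}}$; with each $h_i \in \{0,1\}$ these sums lie in $[0,n-1]$. Choosing a $G_{F_v}$-stable $\cO'$-lattice $L \subset V$ with $L \otimes_{\cO'} k = \overline{r}|_{G_{F_v}}^{\oplus}$-compatible, $\Sym^{n-1} L$ is a $G_{F_v}$-stable $\cO'$-lattice inside the crystalline representation $\Sym^{n-1} V$.

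Finally I would stitch these together: the functoriality of $\Sym^{n-1}$ applied to the lattice/sub-quotient relation between $\widetilde{R}$ (over $\cO'$) and $\widetilde{r}$ (over $A$) yields that $A^n = \Sym^{n-1} A^2$ is, as a $\bZ_p[G_{F_v}]$-module, a subquotient of $\Sym^{n-1}$ of an $\cO'$-lattice in the crystalline $\bQ_p[G_{F_v}]$-module $\Sym^{n-1}(V)$, whose Hodge--Tate weights lie in $[0,n-1]$; one just has to be careful that $\Sym^{n-1}$ of a subquotient is a subquotient of $\Sym^{n-1}$ (true over a field; over $\cO'$ one passes to the associated graded or uses that $\Sym$ is a functor on the category of lattices up to isogeny), and to record that the standard ordered basis convention of \cite[Definition 3.3.1]{blgg} makes the identification $\Sym^{n-1}\widetilde{r}$ on $A^n$ literally the representation in question. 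The main obstacle is the bookkeeping in the first paragraph: producing the characteristic-zero crystalline lift $\widetilde{R}$ over a DVR together with an explicit sub/quotient comparison with the Artinian representation $\widetilde{r}$, i.e. making precise the passage from "the point of $\Spec R_v$ defined by $\widetilde{r}$ lies in the Zariski closure of crystalline points" to "$A^n$ is a subquotient of a lattice in a crystalline representation". Everything after that is formal.
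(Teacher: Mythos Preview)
Your approach differs from the paper's and has a genuine gap at precisely the step you flag as ``bookkeeping''. You propose to lift the Artinian point $R_v \to A$ to a single point $R_v \to \cO'$ over a DVR, so that $\widetilde{r}$ is recovered from the crystalline lift $\widetilde{R}$ by reduction modulo an ideal of $\cO'$. But an Artinian quotient of $R_v$ can have embedding dimension greater than one (already $R_v/\ffrm_{R_v}^2$ does, since $\dim R_v > 1$), and no quotient of a DVR has this property; so in general there is no $\cO'$-point through which $R_v \to A$ factors, and no surjection $(\cO')^2 \twoheadrightarrow A^2$ of the kind you need. The hedge ``or at least a sub/quotient relation'' does not help without further argument. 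One \emph{can} repair the strategy by using finitely many points $\phi_i : R_v \to \cO_i$ with $\bigcap_i \ker\phi_i \subset \ker(R_v \to A)$: such $\phi_i$ exist because $R_v[1/p]$ is Jacobson with trivial Jacobson radical, and Chevalley's theorem in the complete local ring $R_v$ then lets finitely many kernels fall inside any given $\ffrm_{R_v}^N$. This realises $A^2$ as a quotient of a $G_{F_v}$-stable $\bZ_p$-free submodule of $\bigoplus_i \cO_i^2$, after which your $\Sym^{n-1}$ step goes through. But this is the substance of the proof, not bookkeeping, and you have not supplied it.

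The paper's proof takes a shorter and quite different route that avoids lifting to characteristic zero entirely. It uses that the construction of $R_v$ in \cite{Kis09, MR2551765} guarantees that the dual of $A^2$ is the generic fibre of a finite flat group scheme over $\cO_{F_v}$. A theorem of Berthelot (\cite[Th\'eor\`eme 3.1.1]{Ber82}) embeds any such group scheme in a $p$-divisible group, whose Tate module $L$ is a lattice in a crystalline representation with Hodge--Tate weights in $\{0,1\}$; dualising the embedding exhibits $A^2$ directly as a \emph{quotient} of $L$. Since $\Sym^{n-1}_{\bZ_p}$ preserves surjections, $\Sym^{n-1}_A A^2$ is a quotient of the lattice $\Sym^{n-1}_{\bZ_p} L$ in a crystalline representation with Hodge--Tate weights in $[0,n-1]$. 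Obtaining a quotient (rather than a general subquotient) at the rank-$2$ stage is exactly what makes the passage to $\Sym^{n-1}$ immediate.
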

\begin{proof}
It follows from the construction in \cite{Kis09, MR2551765} that the dual of $A^2$ is isomorphic, as $\bZ_p[G_{F_v}]$-module, to the generic fibre of a finite flat group scheme over $\cO_{F_v}$. By \cite[Th\'eor\`eme 3.1.1]{Ber82}, there exists a lattice $L$ in a crystalline $\bQ_p[G_{F_v}]$-representation with Hodge--Tate weights in the interval $[0, 1]$ such that $A^2$ is isomorphic, as $\bZ_p[G_{F_v}]$-module, to a quotient of $L$. It follows that $\Sym^{n-1}_A A^2$ is isomorphic, as $\bZ_p[G_{F_v}]$-module, to a quotient of $\Sym^{n-1}_{\bZ_p} L$. The proof is complete on noting that $\Sym^{n-1}_{\bZ_p} L$ is a lattice in $(\Sym^{n-1}_{\bZ_p} L)[1/p]$, a crystalline $\bQ_p[G_{F_v}]$-representation with all Hodge--Tate weights in the  interval  $[0, n-1]$.
\end{proof}
Let $A \in \cC_\cO$, and let $\widetilde{r} : G_F \to \GL_2(A)$ be a lift of $\overline{r}$ which determines a map $R \to A$. Lemma \ref{lem_sym_power_is_crystalline} shows that the pseudocharacter associated to $\Sym^{n-1} (\widetilde{r}|_{G_K})$ satisfies condition (2.16.1) of \cite{New19a}. In particular there are morphisms $P \to \cO$ associated to the pseudocharacters $t, t'$. Taking the pseudocharacter of the symmetric power of the universal deformation over $R$ determines a morphism $P \to R$ in $\cC_\cO$.  

We will study this morphism using the Taylor--Wiles method. In this paper we call a Taylor--Wiles datum of level $N \geq 1$ a tuple $(Q, \widetilde{Q}, (\alpha_v, \beta_v)_{v \in Q})$, where:
\begin{itemize}
\item $Q$ is a finite set of places of $F$, split in $K$, such that for each $v \in Q$, $q_v \equiv 1 \text{ mod }p^N$.
\item For each $v \in Q$, we have fixed a factorisation $v = \wv \wv^c$ in $K$ such that $\widetilde{Q} = \{ \wv \mid v \in Q \}$.
\item For each $v \in Q$, $\alpha_v, \beta_v \in k$ are eigenvalues of $\overline{r}(\Frob_v)$. We require that $\alpha_v^{n-1}, \alpha_v^{n-2} \beta_v, \dots, \beta_v^{n-1}$ are distinct elements of $k$.
\end{itemize}
We note that this last condition is stronger than the one typically appearing in applications to automorphy of 2-dimensional Galois representations and is specially adapted to our purposes here.

Let $r^{univ}_Q : G_F \to \GL_2(R_Q)$ denote a representative of the universal deformation.  If $v \in Q$ then $r^{univ}_Q|_{G_{F_v}}$ is conjugate (in  $\GL_2(R_Q)$) to a unique representation  of the  form $A_v \oplus B_v$, where $A_v \text{ mod }\ffrm_{R_Q} = \operatorname{ur}_{\alpha_v}$and $B_v \text{ mod }\ffrm_{R_Q} = \operatorname{ur}_{\beta_v}$. The characters $A_v, B_v : G_{F_v} \to R_Q^\times$ are independent of the choice of $r^{univ}_Q$. We write $\Delta_Q = \prod_{v \in Q} k(v)^\times(p)$ (i.e.  product of maximal $p$-power quotients of $k(v)^\times$). The collection of characters $A_v \circ \Art_{F_v}|_{\cO_{F_v}^\times}$ ($v \in Q$) determine a homomorphism $\cO[\Delta_Q] \to R_Q$ with the property that the natural map $R_Q \to R$ factors through a canonical isomorphism $R_Q \otimes_{\cO[\Delta_Q]} \cO \cong R$.

We write $P_{Q} \in \cC_\cO$ for the quotient of the quotient $R_{S \cup Q}$ of 
$R_{\overline{t}, S \cup Q}^{[0, n-1]}$ introduced in \cite[\S 2.19]{New19a} 
corresponding to pseudodeformations $\widetilde{t}$ of $\overline{t}$ with the 
following additional properties:
\begin{itemize}
	\item For each $v \in Q$, $\widetilde{t}|_{G_{K_\wv}}$ factors through the 
	maximal Hausdorff abelian quotient $G_{K_{\wv}} \to G_{K_\wv}^{ab}$.\footnote{It is possible that our condition on the eigenvalues of $\Frob_v$ necessarily entails that $\widetilde{t}$ is abelian at $\wv$. However, we haven't verified this and it doesn't cost us anything to build this in to the definition of $P_Q$.} By 
	\cite[Proposition 2.5]{All19} (cf.~\cite[Lemma 4.28]{New19a}, in \cite{All19} we generalise \cite[Proposition 1.5.1]{bellaiche_chenevier_pseudobook} to arbitrary characteristic), there are unique characters $\chi_{v, 1}, \dots, \chi_{v, n}$ lifting 
	$\operatorname{ur}_{\alpha_v^{n-i} \beta_v^{i-1}}$ such that 
	$\widetilde{t}|_{G_{K_\wv}}$ is the pseudocharacter associated to $\chi_{v, 
	1} \oplus \dots \oplus \chi_{v, n}$.
	\item Let $m = \lfloor n /2 \rfloor$ and let $i \in \{ 1, \dots, n \}$. If $n = 2m$ is even, then 
	$\chi_{v, i}|_{I_{K_\wv}} = \chi_{v, m}^{n+1-2i}|_{I_{K_\wv}}$. If $n = 
	2m+1$ is odd, then $\chi_{v, i}|_{I_{K_\wv}} = \chi_{v, 
	m}^{(n+1-2i)/2}|_{I_{K_\wv}}$. 
\end{itemize}
The characters $\chi_{v, m}$ ($v \in Q$) give $P_Q$ the structure of 
$\cO[\Delta_Q]$-algebra, and again there is a universal morphism $P_Q \to 
R_Q$. We remark that this need not be a morphism of $\cO[\Delta_Q]$-algebras when $n$ is odd, although it is when $n$  is even.

To carry out the patching argument, we  need to introduce spaces of automorphic forms. We first discuss automorphic forms on a definite quaternion algebra over $F$, following the set-up of \cite[\S 3.1]{MR2551765} and \cite[\S 7]{Kha09a}. Let $D$ be a definite quaternion algebra over $F$, ramified precisely at the infinite places and at the places of $\Sigma$. Fix a choice of maximal order $\cO_D$ and for each finite place $v \not\in \Sigma$, an identification $\cO_D \otimes_{\cO_F} \cO_{F_v} \cong M_2(\cO_{F_v})$. If $U = \prod_v U_v \subset (D \otimes_F \bA_F^\infty)^\times$ is an open subgroup, then we write $H_D(U)$ for the set of functions $f : (D \otimes_{\cO_F} \bA_F^\infty)^\times \to \cO$ such that for all $\gamma \in D^\times$, $z \in (\bA_F^\infty)^\times$, $g \in (D \otimes_{\cO_F} \bA_F^\infty)^\times$, and $u \in U$, we have $f(\gamma g z u) = f(g)$. 

We define 
\[ U_0 = \left( \prod_{v \not\in \Sigma} (\cO_D \otimes_{\cO_F} \cO_{F_v})^\times \right) \times \left( \prod_{v \in \Sigma} (D \otimes_F F_v)^\times \right). \]
If $(Q, \widetilde{Q}, \{ \alpha_v, \beta_v \}_{v \in Q})$ is a Taylor--Wiles datum of level $N \geq 1$, then we define $U_1(Q;N) = \prod_v U_1(Q; N)_v \subset U_0(Q) = \prod_v U_0(Q)_v$ by $U_1(Q; N)_v = U_0(Q)_v = U_{0, v}$ if $v \not\in Q$, and $U_0(Q)_v = \Iw_v$ and 
\[ U_1(Q; N)_v = \left\{\left( \begin{array}{cc} a & b \\ c & d \end{array} \right) \in \Iw_v : a d^{-1} \mapsto 1 \in k(v)^\times(p) /  (p^N) \right\} \]
if $v \in Q$. Thus $U_1(Q;N) \subset U_0(Q)$ is a normal subgroup with quotient  
\[ U_0(Q) / U_1(Q; N) \cong \Delta_Q / (p^N). \]

We introduce Hecke operators. If $v \not\in \Sigma \cup Q$ is a finite place of $F$, then the 
unramified Hecke operators $T_v^{(1)}, T_v^{(2)}$  act on $H_D(U_0(Q))$ and 
$H_D(U_1(Q; N))$. If $v \in Q$ then the operator $U^{(1)}_{\varpi_v}$ acts on 
$H_D(U_0(Q))$ and $H_D(U_1(Q; N))$.  We write $\bT^{univ}_{D, \Sigma \cup Q}$ 
for the polynomial ring over $\cO$ in the indeterminates $T_v^{(1)}, 
T_v^{(2)}$($v \not\in \Sigma \cup Q$) and $\bT^{univ, Q}_{D, \Sigma \cup Q} = 
\bT^{univ}_{D, \Sigma \cup Q}[\{U^{(1)}_{\varpi_v}\}_{v \in Q}]$.

There is a unique maximal ideal $\ffrm_D \subset \bT^{univ}_{D, \Sigma}$ of 
residue field $k$ such that for all finite places $v \not\in S$ of $F$, 
the characteristic polynomial of $\overline{r}(\Frob_v)$ equals $X^2 - 
T_v^{(1)} X + q_v T_v^{(2)} \text{ mod }\ffrm_D$ and for each $v \in S_p$, $T_v^{(1)} \in \ffrm_D$ and $T_v^{(2)} - 1 \in \ffrm_D$. If $(Q, \widetilde{Q}, 
(\alpha_v, \beta_v)_{v \in Q})$ is a Taylor--Wiles datum, then we  write 
$\ffrm_{D, Q}$ for the maximal ideal of $\bT^{univ, Q}_{D, \Sigma \cup Q}$ 
generated by $\ffrm_D \cap \bT^{univ}_{D, \Sigma \cup Q}$ and the elements 
$U^{(1)}_{\varpi_v} - \alpha_v$ ($v \in Q$). 

If  $\chi : F^\times \backslash (\bA_F^\infty)^\times / \det U_1(Q; N) \to \cO^\times$ is a quadratic character  and $f \in  H_D(U_1(Q; N))$, then we define  $f \otimes \chi \in  H_D(U_1(Q; N))$  by the formula $(f \otimes \chi)(g) = \chi(\det(g)) f(g)$. We observe that if $p = 2$ and $f  \in H_D(U_1(Q; N))_{\ffrm_{D, Q}}$ then $f \otimes \chi \in H_D(U_1(Q; N))_{\ffrm_{D, Q}}$.

Let $m_D \geq 0$ denote the $p$-adic valuation of the least common multiple of 
the exponents of the Sylow $p$-subgroups of the finite groups $F^\times 
\backslash (U (\bA_F^\infty)^\times \cap t^{-1} D^\times t)$ for $t \in (D 
\otimes_F \bA_F^\infty)^\times$; this number is finite, see \cite[\S 
7.2]{Kha09a}.
\begin{proposition}\label{prop_automorphic_forms_on_D_at_level_Q} Let $N \geq 1$ and let $(Q, \widetilde{Q}, (\alpha_v, \beta_v)_{v \in Q})$ be a Taylor--Wiles datum of level $N + m_D$.
\begin{enumerate}
\item The maximal ideals $\ffrm_D$ and $\ffrm_{D, Q}$ are in the support of $H_D(U_0)$ and $H_D(U_0(Q))$, respectively.
\item $H_D(U_1(Q; N))_{\ffrm_{D, Q}}$ is a $\bT^{univ}_{D, \Sigma \cup Q} \otimes_\cO \cO[\Delta_Q / (p^N)]$-module free as $\cO[\Delta_Q / (p^N)]$-module, and there is an isomorphism $H_D(U_1(Q; N))_{\ffrm_{D, Q}} \otimes_{\cO[\Delta_Q / (p^N)]} \cO \cong H_D(U_0)_{\ffrm_D}$ of $\bT^{univ}_{D, \Sigma \cup Q}$-modules.
\item There exists a structure on $H_D(U_1(Q; N))_{\ffrm_{D, Q}}$ of 
$R_Q$-module such that for any representative $r_Q^{univ}$ of the universal 
deformation of $\overline{r}$ and for each finite place $v\not\in S \cup Q$ of 
$F$, $\tr r_Q^{univ}(\Frob_v)$ acts as $T_v^{(1)}$ and $\det 
r_Q^{univ}(\Frob_v)$ acts as $q_v T_v^{(2)}$. Moreover, the 
$\cO[\Delta_Q]$-module structure induced by the map $\cO[\Delta_Q] \to R_Q$ 
agrees with the one in the second part of the lemma.
\end{enumerate}
\end{proposition}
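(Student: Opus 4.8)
For part (1), the plan is to exhibit an explicit automorphic form. Let $\pi^D$ be the Jacquet--Langlands transfer of $\pi$ to $D^\times$; by the running hypotheses on $\pi$ (unramified at $v \mid p$, Iwahori-spherical at all finite places, and isomorphic to the Steinberg representation exactly at the places of $\Sigma$, which are ramified in $D$), the representation $\pi^D$ has a nonzero $U_0$-fixed vector and hence contributes a Hecke eigenform $f_\pi \in H_D(U_0) \otimes_\cO \overline{\bQ}_p$. Its system of Hecke eigenvalues reduces modulo $\varpi$, via $\iota$ and local--global compatibility, to the system cut out by $\ffrm_D$: the characteristic polynomial relations at places $v \notin S$ hold because $\overline{r}_{\pi,\iota} \cong \overline{r}$, and the conditions $T_v^{(1)} \in \ffrm_D$, $T_v^{(2)} - 1 \in \ffrm_D$ at $v \in S_p$ follow from a direct computation with local--global compatibility at $v$, using $\det r_{\pi,\iota} = \epsilon^{-1}$ and the non-ordinarity of $r_{\pi,\iota}|_{G_{F_v}}$ (which forces the trace of crystalline Frobenius to reduce to $0$ modulo $\varpi$). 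So $\ffrm_D$ lies in the support of $H_D(U_0)$; the claim for $\ffrm_{D,Q}$ follows either from this together with part (2), or directly by passing to Iwahori level at the places of $Q$ and observing that $U^{(1)}_{\varpi_v}$ acts on the two-dimensional space of $\Iw_v$-fixed vectors of $\pi^D_v$ with eigenvalues reducing to the two eigenvalues of $\overline{r}(\Frob_v)$, so that the eigenvector for the eigenvalue lifting $\alpha_v$ contributes to $H_D(U_0(Q))_{\ffrm_{D,Q}}$.

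For part (2), the freeness of $H_D(U_1(Q;N))_{\ffrm_{D,Q}}$ over $\cO[\Delta_Q/(p^N)]$ is the standard Taylor--Wiles local computation: $U_1(Q;N)$ is a normal subgroup of $U_0(Q)$ with quotient $\Delta_Q/(p^N)$, the module of functions decomposes over the finite double-coset set, and the possible failure of freeness on each orbit is governed by a finite stabilizer whose relevant $p$-part has exponent at most $p^{m_D}$; working at level $N + m_D$ and localizing at the non-Eisenstein maximal ideal $\ffrm_{D,Q}$ (for which $\overline{r}$ is irreducible) guarantees that $\Delta_Q/(p^N)$ acts freely, so the localized module is finite free (cf.\ \cite[\S 3.1]{MR2551765}, \cite[\S 7.2]{Kha09a}). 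Taking $\Delta_Q/(p^N)$-coinvariants recovers $H_D(U_0(Q))_{\ffrm_{D,Q}}$, and a level-lowering argument at the places of $Q$ --- using $q_v \equiv 1 \bmod p$ and $\alpha_v \neq \beta_v$ (a consequence of the imposed distinctness of $\alpha_v^{n-1}, \dots, \beta_v^{n-1}$, since $\alpha_v\beta_v \neq 0$), so that $U^{(1)}_{\varpi_v}$ acts semisimply and the eigenvalue $\equiv \alpha_v$ is a simple root of the relevant Hecke polynomial --- splits off a direct summand isomorphic to $H_D(U_0)_{\ffrm_D}$ as $\bT^{univ}_{D,\Sigma\cup Q}$-modules.

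For part (3), let $\bT$ be the image of $\bT^{univ,Q}_{D,\Sigma\cup Q}$ in $\End_\cO(H_D(U_1(Q;N))_{\ffrm_{D,Q}})$, a reduced, $\cO$-flat, complete local $\cO$-algebra with residue field $k$. Since $\overline{r}$ is absolutely irreducible (by the large-image hypothesis), the Galois representations attached to the constituent automorphic forms, together with local--global compatibility and Carayol's lemma, glue to a continuous lift $\rho : G_F \to \GL_2(\bT)$ of $\overline{r}$, unramified outside $S \cup Q$, with $\det\rho = \epsilon^{-1}$ and $\tr\rho(\Frob_v) = T_v^{(1)}$, $\det\rho(\Frob_v) = q_v T_v^{(2)}$ for $v \notin S \cup Q$. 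To produce the morphism $R_Q \to \bT$ and hence the $R_Q$-module structure, one checks that $\rho$ meets the local conditions defining $R_Q$: at $v \in S_\infty$ this is automatic for forms on a definite quaternion algebra; at $v \in \Sigma$ the contributing $\GL_2(\bA_F)$-forms are (untwisted) Steinberg --- forced by $U_{0,v} = (D\otimes_F F_v)^\times$ and the Jacquet--Langlands correspondence --- so $\rho|_{G_{F_v}}$ is an extension of $\epsilon^{-1}$ by the trivial character on a dense set of points and factors through $R_v$; at $v \in S_p$ the constituent representations are crystalline of Hodge--Tate weights $\{0,1\}$ and non-ordinary (non-ordinarity being forced, as in part (1), by the conditions $T_v^{(1)} \in \ffrm_D$, $T_v^{(2)} - 1 \in \ffrm_D$), so $\rho|_{G_{F_v}}$ factors through $R_v$ by the theory of \cite{MR2551765}; and at $v \in Q$, $\rho|_{G_{F_v}}$ is residually a direct sum of distinct unramified characters, hence decomposes as $A_v \oplus B_v$ by Hensel's lemma. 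Finally, the agreement of the two $\cO[\Delta_Q]$-module structures is a local computation at the places of $Q$: both are given by the diamond operators $U_0(Q)/U_1(Q;N) \cong \Delta_Q/(p^N)$, which on the Galois side match $A_v \circ \Art_{F_v}|_{\cO_{F_v}^\times}$ by the compatibility of $\rec$ with the action of the Iwahori--Hecke algebra on an unramified principal series.

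The step I expect to be the main obstacle is the verification of the local conditions at the $p$-adic places in part (3): one needs that \emph{every} automorphic form contributing to $H_D(U_0)_{\ffrm_D}$ is non-ordinary at each $v \mid p$, which is exactly why the defining conditions of $\ffrm_D$ at the $p$-adic places are taken as above, and the conclusion that $\rho|_{G_{F_v}}$ factors through $R_v$ then rests on the properties of the potentially Barsotti--Tate local deformation ring established in \cite[Corollary 2.3.13]{MR2551765}.
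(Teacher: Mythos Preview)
Your proposal is correct and follows essentially the same route as the paper, which simply cites the Jacquet--Langlands correspondence for part (1), \cite[Corollary 7.5]{Kha09a} for part (2), and \cite[Lemma 9.1]{Kha09a} together with \cite[Lemma 3.2.7]{MR2551765} for part (3); you have accurately unpacked what these references do, and in particular you have identified the one point the paper singles out explicitly, namely that the conditions $T_v^{(1)} \in \ffrm_D$ at $v \in S_p$ force every contributing automorphic representation to be non-ordinary, so that the local deformation condition at $p$ is satisfied. One small imprecision: in part (2) the freeness of $H_D(U_1(Q;N))_{\ffrm_{D,Q}}$ over $\cO[\Delta_Q/(p^N)]$ comes entirely from the level-$N+m_D$ mechanism (which ensures $U_1(Q;N)$ contains all the relevant $p$-torsion, so that $\Delta_Q/(p^N)$ already acts freely on the double coset set), not from the non-Eisenstein localization.
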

\begin{proof}
The first part is a consequence of the Jacquet--Langlands correspondence (and the existence of $\pi$). The second part is \cite[Corollary 7.5]{Kha09a}. The third part is proved in the same way as \cite[Lemma 9.1]{Kha09a} (cf.~also \cite[Lemma 3.2.7]{MR2551765}); note that since $T_v^{(1)} \in \ffrm_D$ for each $v \in S_p$, only automorphic representations which are non-ordinary at each $v \in S_p$ can contribute to $H_D(U_1(Q; N))_{\ffrm_D}$.
\end{proof}
We next discuss automorphic forms on a definite unitary group of  rank $n$. We therefore fix a unitary group $G$ over $F$, split by $K / F$, as in \cite[\S 4.1]{New19a}, together with an extension of $G$ to a reductive group scheme over $\cO_F$. We recall that $G$ comes equipped with isomorphisms $\iota_w : G_{\cO_{F_v}} \to \Res_{\cO_{K_w} / \cO_{F_v}} \GL_n$ for each place $v$ of $F$ which splits $v = w w^c$ in $K$. Moreover, for each place $v \nmid \infty$ of $F$, $G_{F_v}$ is quasi-split, while for each place $v | \infty$ of $F$, $G(F_v)$ is compact.

If $V = \prod_v V_v \subset G(\bA_{F^+}^\infty)$ is an open compact subgroup, then we write $H_G(V)$ for the set of functions $f : G(F) \backslash G(\bA_{F}^\infty) / V \to \cO$. We define $V_0 = \prod_v V_{0, v}$ by choosing $V_{0, v} = G(\cO_{F_v})$ if $v \not\in \Sigma$ and $V_{0, v} = \iota_\wv^{-1} \Iw_\wv$ if $v \in \Sigma$. If $(Q, \widetilde{Q}, \{ \alpha_v, \beta_v \}_{v \in Q})$ is a Taylor--Wiles datum of level $N \geq 1$, then we define $V_1(Q; N) = \prod_v V_1(Q; N)_v \subset V_0(Q) = \prod_v V_0(Q)_v$ by $V_1(Q; N)_v = V_0(Q)_v = V_{0, v}$ if $v \not\in Q$, and $V_0(Q)_v = \iota_\wv^{-1} \Iw_\wv$ and
\[ V_1(Q; N)_v = \iota_\wv^{-1} \{ (a_{ij}) \in \Iw_\wv \mid \prod_{i=1}^n a_{ii}^{n+1-2i} \mapsto 1 \in k(\wv)^\times(p) / (p^N) \}\]
if $v \in Q$ and $n$ is even, and 
\[ V_1(Q; N)_v = \iota_\wv^{-1} \{ (a_{ij}) \in \Iw_\wv \mid \prod_{i=1}^n a_{ii}^{(n+1-2i)/2} \mapsto 1 \in k(\wv)^\times(p) / (p^N) \}\]
if $v \in Q$ and $n$ is odd. Thus $V_1(Q; N) \subset V_0(Q)$ is a normal subgroup with quotient  \[ V_0(Q) / V_1(Q; N) \cong \Delta_Q  / (p^N). \]
 We introduce Hecke operators for the group $G$. If $v \not\in S \cup Q$ is a place of $F$ which splits $v = w w^c$ in $K$, then the unramified Hecke operators $\iota_w^{-1} T_w^{(i)}$ ($i = 1, \dots n$) act on the spaces $H_G(V_0(Q))$ and $H_G(V_1(Q; N))$. If $v \in Q$ then the operators $\iota_\wv^{-1} U_{\varpi_\wv}^{(i)}$ ($i = 1, \dots, n$) act on the spaces $H_G(V_0(Q))$ and $H_G(V_1(Q; N))$. We  write $\bT^{univ}_{G, S \cup Q}$ for the polynomial ring over $\cO$ in the indeterminates $T_w^{(i)}$ (where $w$  is a place of $K$ split over  $F$, not lying above  $S \cup Q$ and $1 \leq i \leq n$), and $\bT^{univ, Q}_{G,S \cup  Q} = \bT^{univ}_{G, S\cup Q}[\{ U_{\varpi_\wv}^{(i)} \}^{i = 1, \dots, n}_{v \in Q}]$. Thus $H_G(V_0(Q))$ and $H_G(V_1(Q; N))$ are $\bT^{univ, Q}_{G, S\cup Q}$-modules.

There is a unique maximal ideal $\ffrm_G \subset \bT^{univ}_{G, S}$ of residue field $k$ such that for all finite places $w$ of $K$, split over $F$ and not lying above $S$, the characteristic polynomial of $\Sym^{n-1} \overline{r} (\Frob_w)$ equals $\sum_{i=0}^n (-1)^i q_w^{i(i-1)/2 } T_w^{(i)} X^{n-i} \text{ mod }\ffrm_G$. If $(Q, \widetilde{Q}, (\alpha_v, \beta_v)_{v \in Q})$ is a Taylor--Wiles datum, then we write $\ffrm_{G, Q}$ for the maximal ideal of $\bT^{univ, Q}_{G, S \cup Q}$ generated by $\ffrm_G \cap \bT^{univ}_{G, S \cup Q}$ and the elements 
\[ U_{\varpi_\wv}^{(i)} - q_\wv^{i(1-i)/2} \prod_{j=1}^i \alpha_v^{n-j} \beta_v^{j-1} \]
for $v \in Q$, $i = 1, \dots, n$.

The unitary group $G$ comes with a determinant map $\det : G \to U_1$, where $U_1  = \ker (\mathbf{N}_{K / F} : \Res_{K / F} \bG_m \to \bG_m)$. If
\[ \theta : U_1(F) \backslash U_1(\bA_F^\infty) / \det V_1(Q;  N) \to \cO^\times\]
 is a character and $f \in H_G(V_1(Q; N))$ then we define $f \otimes \theta  \in H_G(V_1(Q; N))$ by  the formula $(f \otimes \theta)(g)  = \theta(\det(g)) f(g)$.   If $f \in H_G(V_1(Q;  N))_{\ffrm_{G, Q}}$ and $\overline{\theta}$ is trivial, then  $f \otimes  \theta \in H_G(V_1(Q;  N))_{\ffrm_{G, Q}}$. We will use this construction only in conjunction with the following lemma.
 \begin{lemma}
 Suppose that $n$ is even and that $p = 2$. Suppose that  
 \[ \chi : F^\times \backslash (\bA_F^\infty)^\times / \det U_1(Q; N) \to \cO^\times
\]
 is a quadratic character. Then there exists a unique character \[ \theta_\chi  : U_1(F) \backslash U_1(\bA_F^\infty) / \det V_1(Q;  N) \to  \cO^\times \]
such that for all $z \in (\bA_K^\infty)^\times$ we have $\theta_\chi(z / z^c) = 
\chi(z  z^c)$. 
 \end{lemma}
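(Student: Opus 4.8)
The plan is to write $\theta_\chi$ down explicitly by means of Hilbert's Theorem~90 for the norm torus $U_1 = \ker(\mathbf{N}_{K/F})$, and then to check that it descends to the quotient in the statement; the hypotheses that $\chi$ is quadratic, that $n$ is even, and that $p = 2$ will enter at separate, clearly identifiable points.

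First I would record the following form of Hilbert~90: since $K/F$ is unramified at every finite place, $z \mapsto z/z^c$ gives surjections $K^\times \twoheadrightarrow U_1(F)$, $(\bA_K^\infty)^\times \twoheadrightarrow U_1(\bA_F^\infty)$, and $\cO_{K_v}^\times \twoheadrightarrow U_1(\cO_{F_v})$ for every finite place $v$ (the local statements being trivial at split places and following from $H^q(\Gal(K_v/F_v), \cO_{K_v}^\times) = 0$ at inert places; the adelic statement then follows since $u \in U_1(\cO_{F_v})$ for almost all $v$). One is thus \emph{forced} to define $\theta_\chi(z/z^c) = \chi(z z^c) = \chi(\mathbf{N}_{K/F} z)$, which makes uniqueness automatic; and this is well defined precisely because $\chi$ is quadratic, for if $z/z^c = z'/z'^c$ then $z' = a z$ with $a \in (\bA_F^\infty)^\times$, whence $z' z'^c = a^2 z z^c$ and $\chi(a^2) = \chi(a)^2 = 1$. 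Multiplicativity of $z \mapsto z/z^c$ and of $z \mapsto z z^c$ then shows $\theta_\chi$ is a homomorphism $U_1(\bA_F^\infty) \to \cO^\times$, and triviality on $U_1(F)$ is immediate because for $z \in K^\times$ we have $\mathbf{N}_{K/F} z \in F^\times$, on which $\chi$ is trivial.

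It remains to show $\theta_\chi$ is trivial on $\det V_1(Q;N) = \prod_v \det V_1(Q;N)_v$, which I would verify place by place, using that at a place $v$ split in $K$ the identification $U_1(F_v) \cong F_v^\times$ carries $\theta_\chi|_{U_1(F_v)}$ to $\chi_v$. For $v \notin S \cup Q$ we have $\det V_1(Q;N)_v = \det G(\cO_{F_v}) \subseteq U_1(\cO_{F_v})$ and $\chi$ is unramified at $v$ (since $\det U_1(Q;N)_v \supseteq \cO_{F_v}^\times$), so $\theta_\chi$ kills $U_1(\cO_{F_v})$ because $\mathbf{N}_{K/F}$ sends $\cO_{K_v}^\times$ into $\cO_{F_v}^\times$; for $v \in \Sigma$ the character $\chi_v$ is itself trivial because $\det U_1(Q;N)_v$ is the image of the reduced norm $D_v^\times \to F_v^\times$, which is all of $F_v^\times$; and $v \in S_p$ is handled exactly as $v \notin S \cup Q$. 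The genuinely substantive case, which I expect to be the main obstacle, is $v \in Q$: here one unwinds the definitions of $U_1(Q;N)_v \subset \GL_2(\cO_{F_v})$ and of $V_1(Q;N)_v$, transports the latter via $\iota_\wv$ to a subgroup of $\GL_n(\cO_{F_v})$, identifies $\det\colon G \to U_1$ at the split place $v$ with the usual determinant, and compares modulo $\varpi_v$. A short computation gives $\det U_1(Q;N)_v = \{\lambda \in \cO_{F_v}^\times : \overline{\lambda} \in (k(v)^\times)^2\}$, so the hypothesis on $\chi$ says exactly that $\chi_v$ is trivial on units whose reduction is a square. On the other side, the exponents $n+1-2i$ in the definition of $V_1(Q;N)_v$ are all \emph{odd} because $n$ is even, so modulo squares $\prod_i \overline{a_{ii}}^{\,n+1-2i} \equiv \prod_i \overline{a_{ii}} = \overline{\det(a_{ij})}$; combined with $(k(v)^\times)^{2^N} \subseteq (k(v)^\times)^2$ (where $p = 2$ is used), the defining congruence of $V_1(Q;N)_v$ forces $\overline{\det(a_{ij})} \in (k(v)^\times)^2$, hence $\det V_1(Q;N)_v \subseteq \det U_1(Q;N)_v$ and $\theta_\chi$ is trivial there. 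I would emphasise that the parity of $n$ is genuinely necessary: for odd $n$ the diagonal exponents $(n+1-2i)/2$ need not all have the same parity, $\det V_1(Q;N)_v$ need not lie in $\det U_1(Q;N)_v$, and a ramified quadratic $\chi$ would obstruct the construction --- this containment is precisely the compatibility that the level groups $U_1(Q;N)$ and $V_1(Q;N)$ were engineered to have when $n$ is even.
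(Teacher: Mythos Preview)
Your proof is correct and follows essentially the same approach as the paper: both use Hilbert~90 for the torus $U_1$ to produce the unique $\theta_\chi$ on $U_1(F)\backslash U_1(\bA_F^\infty)$, and then verify triviality on $\det V_1(Q;N)$ place by place, the only nontrivial case being $v\in Q$, where one shows $\det V_1(Q;N)_v$ lies in the units with square image in $k(v)^\times$ (using that the exponents $n+1-2i$ are odd) and that a quadratic $\chi_v$ kills this subgroup. Your write-up is more explicit than the paper's (separating $v\in\Sigma$ and $v\in S_p$ from the generic case, and computing $\det U_1(Q;N)_v$ rather than just invoking that $\chi$ is quadratic), but the content is the same.
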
 
 \begin{proof}
 There is a short exact sequence of $F$-groups
 \[ 0 \to \bG_m \to \Res_{K / F} \bG_m \to U_1 \to 0, \]
where the last map is $z \mapsto z / z^c$. By Hilbert 90 we have a short exact sequence
\[ 0 \to F^\times \backslash (\bA_F^\infty)^\times \to K^\times \backslash (\bA_K^\infty)^\times \to U_1(F) \backslash U_1(\bA_F^\infty) \to 0. \]
This shows that there is a unique character $\theta_\chi : U_1(F) \backslash U_1(\bA_F^\infty)  \to \cO^\times$ such that $\theta_\chi(z / z^c) = \chi(z z^c)$ for all $z \in (\bA_K^\infty)^\times$. We need to check that $\theta_\chi$ is trivial on $\det V_1(Q;  N)$. This can be checked locally at each finite place of $F$. At places $v \not\in Q$ it follows from the fact that $\chi$ is unramified at $v$. If $v \in Q$ then we see, using that $n$ is even and identifying $U_1(F_v)$ with $F_\wv^\times$, that $\det V_1(Q; N)_v$ is contained in the subgroup of $\cO_{F_\wv}^\times$ with square image in $k(\wv)^\times$. Since $\chi$ is quadratic, $\theta_\chi$ annihilates this subgroup, and we're done. 
 \end{proof}
Let $m_G \geq 0$ denote the $p$-adic valuation of the least common multiple of 
the exponents of the Sylow $p$-subgroups of the finite groups $G(F) \cap t V_0 
t^{-1}$ ($t \in G(\A_F^\infty)$). 
\begin{proposition}\label{prop_automorphic_forms_on_G_at_level_Q}
Let $N \geq 1$ and let $(Q, \widetilde{Q}, (\alpha_v, \beta_v)_{v \in Q}))$ be a Taylor--Wiles datum of level $N + m_G$. Then:
\begin{enumerate}
\item The  maximal ideals $\ffrm_G$, $\ffrm_{G, Q}$ are  in the support of $H_G(V_0)$ and $H_G(V_0(Q))$, respectively.
\item $H_G(V_1(Q; N))_{\ffrm_{G, Q}}$ is a $\bT^{univ}_{G, S \cup Q} \otimes_\cO \cO[\Delta_Q / (p^N)]$-module free as $\cO[\Delta_Q / (p^N)]$-module, and there is an isomorphism $H_G(V_1(Q; N))_{\ffrm_{G, Q}} \otimes_{\cO[\Delta_Q / (p^N)]} \cO \cong H_G(V_0)_{\ffrm_G}$ of $\bT^{univ}_{G, S \cup Q}$-modules.
\item There exists a structure on $H_G(V_1(Q; N))_{\ffrm_{G, Q}}$ of $P_Q$-module such that if $\Lambda_i^{univ} : G_K \to P_Q$ ($i = 1, \dots, n$) are the coefficients of the universal characteristic polynomial, defined as in \cite[\S 1.10]{chenevier_det}, then for any finite place $v \not\in S \cup Q$ of $F$ which splits $v = w w^c$ in $K$, $\Lambda_i^{univ}(\Frob_w)$ acts on $H_G(V_1(Q; N))_{\ffrm_{G, Q}}$ as $q_w^{i(i-1)/2} T_w^{(i)}$. Moreover, the $\cO[\Delta_Q]$-module structure on  $H_G(V_1(Q; N))_{\ffrm_{G, Q}}$ induced by the map $\cO[\Delta_Q] \to P_Q$ agrees with the one in the second part of the lemma.
\end{enumerate}
\end{proposition}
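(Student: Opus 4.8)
The argument is parallel to the proof of Proposition~\ref{prop_automorphic_forms_on_D_at_level_Q}, with \cite[\S 4]{New19a} (and, when $p = 2$, \cite{Tho16}) in place of \cite{Kha09a, MR2551765}; we indicate only the points that use the hypotheses and conventions of this section. For the first assertion, recall from hypothesis~(5) of Theorem~\ref{thm_automorphy_of_symmetric_power} that $\Sym^{n-1} r_{\Pi', \iota}$ is automorphic, realised by the RACSDC representation $\Pi'_n$ of $\GL_n(\bA_K)$. For $v \in \Sigma$ the representation $\pi'_v$ is Steinberg, so $\mathrm{WD}(r_{\Pi', \iota}|_{G_{K_\wv}})$ is an unramified twist of the $2$-dimensional special representation and $\Sym^{n-1}$ of it is an unramified twist of the $n$-dimensional special representation; thus $(\Pi'_n)_\wv$ is an unramified twist of the Steinberg representation of $\GL_n(K_\wv)$, in particular Iwahori-spherical. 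Descending $\Pi'_n$ from $\GL_n(\bA_K)$ to $G$ (using the descent input to the construction of $H_G$ recalled in \cite[\S 4]{New19a}) gives an automorphic representation contributing to $H_G(V_0)$, and comparing Hecke eigenvalues at split unramified places shows $\ffrm_G$ lies in the support of $H_G(V_0)$. For $\ffrm_{G, Q}$: the Taylor--Wiles condition that $\alpha_v^{n-1}, \alpha_v^{n-2}\beta_v, \dots, \beta_v^{n-1}$ be distinct makes $\Sym^{n-1}\overline{r}(\Frob_v)$ regular semisimple, so the $G(\cO_{F_v})$-spherical vector, after passage to $\Iw_\wv$ and localisation at $\ffrm_G$, becomes an eigenvector for $\iota_\wv^{-1} U^{(i)}_{\varpi_\wv}$ with eigenvalue congruent to $q_\wv^{i(1-i)/2}\prod_{j=1}^{i}\alpha_v^{n-j}\beta_v^{j-1}$ modulo $\ffrm$, so $\ffrm_{G, Q}$ is in the support of $H_G(V_0(Q))$.

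The second assertion is the usual Taylor--Wiles freeness statement. Since the level is $N + m_G$, with $m_G$ bounding the $p$-part of the exponents of the finite groups $G(F) \cap tV_0t^{-1}$, the group $\Delta_Q/(p^N) = V_0(Q)/V_1(Q;N)$ acts freely on the $\ffrm_{G,Q}$-part of the space of functions on $G(F)\backslash G(\bA_F^\infty)/V_1(Q;N)$, so $H_G(V_1(Q;N))_{\ffrm_{G,Q}}$ is free over $\cO[\Delta_Q/(p^N)]$ with $\Delta_Q/(p^N)$-coinvariants equal to $H_G(V_0(Q))_{\ffrm_{G,Q}}$; and the distinctness of the residual eigenvalues $\alpha_v^{n-i}\beta_v^{i-1}$ at the $v \in Q$ identifies the latter with $H_G(V_0)_{\ffrm_G}$ through the operators $\iota_\wv^{-1}U^{(i)}_{\varpi_\wv}$, as in \cite[Corollary 7.5]{Kha09a} (cf.~\cite[\S 4]{New19a}).

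For the third assertion, let $\lambda : \bT^{univ, Q}_{G, S \cup Q} \to \overline{\bQ}_p$ be a Hecke eigensystem occurring in $H_G(V_1(Q;N))_{\ffrm_{G,Q}}$; it has an associated semisimple representation $r_\lambda : G_K \to \GL_n(\overline{\bQ}_p)$ and group determinant $t_\lambda$ of $G_{K, S \cup Q}$ lifting $\overline{t}$, and $r_\lambda$ is unramified outside $S \cup Q$, crystalline at $S_p$ (since the contributing automorphic representations are unramified at $S_p$), and of level $V_1(Q;N)_\wv$ at $v \in Q$. We must check that $t_\lambda$ lies in the locus cut out by $P_Q$. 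It is conjugate self-dual with similitude character $\epsilon^{1-n}$ (using condition~(7)), and crystalline --- in particular semistable --- with Hodge--Tate weights in $[0, n-1]$ at the $p$-adic places, since the contributing forms have the weight forced by $\pi$ having weight $2$; this follows from local-global compatibility for $\GL_n$ as in \cite[\S 4]{New19a}. At $v \in Q$: the distinctness of $\alpha_v^{n-i}\beta_v^{i-1}$ forces $\mathrm{WD}(r_\lambda|_{G_{K_\wv}})$ to have trivial monodromy and $r_\lambda|_{G_{K_\wv}}$ to be a sum $\chi_{v, 1}\oplus\dots\oplus\chi_{v, n}$ with $\chi_{v, i} \bmod \ffrm = \operatorname{ur}_{\alpha_v^{n-i}\beta_v^{i-1}}$; localisation at $\ffrm_{G, Q}$ pins down the ordering, while $V_1(Q;N)_\wv$-invariance forces the character of $\Iw_\wv/V_1(Q;N)_\wv \cong k(\wv)^\times(p)/(p^N)$ through which the eigenvector transforms to be a single character $\nu$, and local-global compatibility at $\wv$ then translates this --- matching $\nu$ with $\chi_{v, m}|_{I_{K_\wv}}$ (the exponent of $\chi_{v,m}$ being $1$ in either parity of $n$) --- into $\chi_{v, i}|_{I_{K_\wv}} = \chi_{v, m}^{n+1-2i}|_{I_{K_\wv}}$ for $n$ even and $\chi_{v, i}|_{I_{K_\wv}} = \chi_{v, m}^{(n+1-2i)/2}|_{I_{K_\wv}}$ for $n$ odd. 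Interpolating over all such $\lambda$, and running the construction of Galois determinants from Hecke eigensystems as in \cite[Lemma 9.1]{Kha09a} and \cite[\S 4]{New19a}, one obtains a group determinant over the relevant quotient of $\bT^{univ, Q}_{G, S \cup Q}$ which factors through $P_Q$; this gives the $P_Q$-module structure on $H_G(V_1(Q;N))_{\ffrm_{G,Q}}$. The identity $\Lambda^{univ}_i(\Frob_w) = q_w^{i(i-1)/2}T_w^{(i)}$ is local-global compatibility at split unramified places, and the compatibility of the two $\cO[\Delta_Q]$-structures is exactly the statement $\nu = \chi_{v, m}|_{I_{K_\wv}}$ proved above.

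The main point requiring care is the analysis at $v \in Q$ in the third assertion: the defining exponents $n + 1 - 2i$ (respectively $(n+1-2i)/2$) of the level subgroups $V_1(Q;N)_\wv$ are rigged precisely to mirror the inertial relations cutting out $P_Q$, and one must run the Iwahori--Hecke computation and local-global compatibility at $\wv$ with care to conclude that the Galois representations attached to the Hecke eigensystems in $H_G(V_1(Q;N))_{\ffrm_{G,Q}}$ are abelian at $\wv$ of exactly the prescribed shape. The verifications at the $p$-adic places (crystallinity with the correct Hodge--Tate weights) and of the similitude character, while routine given the running hypotheses, also need attention.
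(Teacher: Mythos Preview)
Your outline is broadly correct, but the route you take differs from the paper's in a way worth noting. You work directly at level $V_1(Q;N)$ with the quotient $\Delta_Q$ and the ring $P_Q$, verifying the inertial constraints at $v\in Q$ pointwise on Hecke eigensystems and then interpolating (using that the Hecke algebra is reduced). The paper instead proceeds in two steps: it first introduces the ``standard'' auxiliary objects $\Delta'_Q=\prod_{v\in Q}k(v)^\times(p)^n$, the deeper level $V'_1(Q;N)\subset V_1(Q;N)$ (all diagonal entries trivial in $k(\wv)^\times(p)/(p^N)$), and the larger pseudodeformation ring $P'_Q$ (only requiring abelianness at $\wv$, no inertial relations). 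For these, the freeness, the coinvariants identification, and the $P'_Q$-structure are exactly the usual Taylor--Wiles package, handled by direct citation. The paper then passes to the quotient: trace gives $H_G(V'_1(Q;N))_{\ffrm_{G,Q}}\otimes_{\cO[\Delta'_Q/(p^N)]}\cO[\Delta_Q/(p^N)]\cong H_G(V_1(Q;N))_{\ffrm_{G,Q}}$, and the key algebraic observation $P'_Q\otimes_{\cO[\Delta'_Q]}\cO[\Delta_Q]\cong P_Q$ transports the module structure without ever needing to verify the inertial relations $\chi_{v,i}|_I=\chi_{v,m}^{n+1-2i}|_I$ (resp.\ $\chi_{v,m}^{(n+1-2i)/2}|_I$) directly on automorphic forms.

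What this buys: your direct approach is conceptually cleaner but puts real weight on the Iwahori--Hecke analysis at $v\in Q$ (you acknowledge this is the delicate point). Concretely, you need that a $U_{\varpi_\wv}^{(i)}$-eigenvector in $H_G(V_1(Q;N))_{\ffrm_{G,Q}}$ with the prescribed residual eigenvalues transforms under $(k(\wv)^\times)^n$ by a character $\xi$ with $\xi_i=\psi^{n+1-2i}$ for a single $\psi$, \emph{and} that local-global compatibility matches $\xi_i$ with $\chi_{v,i}|_{I_{K_\wv}}$ in the ordering fixed by $\ffrm_{G,Q}$. This is true, but is precisely the kind of computation the paper's two-step manoeuvre is designed to avoid; the paper's compatibility of $\cO[\Delta'_Q]$-structures at level $V'_1$ is instead deduced from the uniqueness of the decomposition of residually multiplicity-free pseudocharacters (\cite[Proposition~1.5.1]{bellaiche_chenevier_pseudobook}), which is a purely algebraic input. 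Either route works, but if you keep yours you should spell out the local calculation at $\wv$ more carefully rather than leaving it as ``one must run the Iwahori--Hecke computation \dots\ with care''.
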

\begin{proof}
The first part may be deduced, as in \cite[\S 4.3]{New19a}, from \cite[Th\'eor\`eme 5.4]{labesse} (and the existence of $\Pi'_n$). The other parts are proved in a very similar way to the second and third parts of Proposition \ref{prop_automorphic_forms_on_D_at_level_Q}, as we now explain. We begin by constructing a more familiar set of objects. Let $\Delta'_Q = \prod_{v \in Q} k(v)^\times(p)^n$. Let $V'_1(Q; N) = \prod_v V'_1(Q; N)_v \subset V_0(Q)$ be  the subgroup defined by $V'_1(Q; N)_v = V_0(Q)_v$ if $v \not\in Q$ and 
\[ V'_1(Q; N)_v = \iota_\wv^{-1} \{ (a_{ij}) \in \Iw_\wv \mid \forall i = 1, \dots, n, a_{ii} \mapsto 1 \in k(\wv)^\times(p) / (p^N) \} \]
if $v \in Q$. Thus $V'_1(Q; N) \subset V_0(Q)$ is a normal subgroup with 
quotient $V_0(Q) / V'_1(Q; N) \cong \Delta_Q' / (p^N)$. We write $P_Q' \in 
\cC_\cO$ for the quotient of the ring $R_{S \cup Q}$ introduced in \cite[\S 2.19]{New19a} corresponding to pseudodeformations whose restriction to $G_{K_\wv}$ factors through $G_{K_{\wv}}^{ab}$ for each $v \in Q$. As in the case of $P_Q$, \cite[Proposition 2.5]{All19} again shows that for each $v \in Q$ there are 
unique characters $A_v^{(i)} : G_{K_\wv} \to (P_Q')^\times$ ($i = 1, \dots, n$) 
such that $A_v^{(i)} \text{ mod }\ffrm_{P'_Q} = 
\operatorname{ur}_{\alpha_v^{n-i} \beta_v^{i-1}}$ and $\det(A_v^{(1)} 
\oplus \dots \oplus A_v^{(n)})$ is the restriction to $G_{K_\wv}$ of 
the universal pseudocharacter over $G_{K}$. We now claim that the following 
statements hold:
\begin{itemize}
\item $H_G(V'_1(Q; N))_{\ffrm_{G, Q}}$ is a $\bT^{univ}_{G, S \cup Q} \otimes_\cO \cO[\Delta'_Q / (p^N)]$-module free as $\cO[\Delta'_Q / (p^N)]$-module, and there is an isomorphism $H_G(V'_1(Q; N))_{\ffrm_{G, Q}} \otimes_{\cO[\Delta'_Q / (p^N)]} \cO \cong H_G(V_0)_{\ffrm_G}$ of $\bT^{univ}_{G,S\cup Q}$-modules.
\item There is a unique structure on $H_G(V'_1(Q; N))_{\ffrm_{G, Q}}$ of $P'_Q$-module such that for any place $v \not\in S \cup Q$ of $F$ which splits $v = w w^c$ in $K$, $\Lambda_i^{univ}(\Frob_\wv)$ acts on $H_G(V'_1(Q; N))_{\ffrm_{G, Q}}$ as $q_w^{i(i-1)/2} T_w^{(i)}$.
\item The two induced $\cO[\Delta'_Q]$-module structures on $H_G(V'_1(Q; N))_{\ffrm_{G, Q}}$ (one by the isomorphism $V_0(Q) / V_1'(Q; N) \cong \Delta'_Q / (p^N)$, the other by the map $\Delta'_Q \to P'_Q$ associated to the tuple of characters $A_v^{(i)} \circ \Art_{K_\wv}|_{\cO_{K_\wv}^\times}$) are the same.
\end{itemize}
To prove the first point, we need to explain why $H_G(V'_1(Q; N))_{\ffrm_{G, 
Q}}$ is an $\cO[\Delta'_Q / (p^N)]$-module with the claimed coinvariants. The 
action of $\Delta'_Q / (p^N)$ is induced by the action of $V_0(Q)$ via the 
isomorphism $V_0(Q) / V_1'(Q; N) \cong \Delta'_Q / (p^N)$ (which therefore 
commutes with the action of $\bT^{univ}_{G, S \cup Q}$). The freeness 
follows because $V_0(Q) / V_1'(Q; N)$ acts freely on the quotient $G(F) 
\backslash G(\bA_F^\infty) / V_1'(Q; N)$, because $V_1'(Q; N)$ contains all the 
$p$-torsion elements of $V_0(Q)$; compare the proof of \cite[Lemma 7.4]{Kha09a} 
and \cite[Lemma 8.18]{Boc19}. The freeness of this action implies that there is 
an isomorphism $H_G(V'_1(Q; N))_{\ffrm_{G, Q}} \otimes_{\cO[\Delta'_Q / (p^N)]} 
\cO \cong H_G(V_0(Q))_{\ffrm_{G, Q}}$ of $\bT^{univ}_{G, S \cup 
Q}$-modules. To complete the proof of the first point, we need to explain why 
there is an isomorphism $H_G(V_0(Q))_{\ffrm_{G, Q}} \cong H_G(V_0)_{\ffrm_G}$. 
This follows from \cite[Proposition 3.1]{New19a}. The second part is proved in 
the same way as \cite[Lemma 4.7]{New19a}.  The third part is proved using  
\cite[Lemma 4.7]{New19a} and \cite[Proposition 
1.5.1]{bellaiche_chenevier_pseudobook}  (in particular, the uniqueness of  the 
decomposition of residually multiplicity-free pseudocharacters). 

We now need to explain why the above claims imply the properties in the statement of the proposition. There are canonical quotient morphisms $P'_Q \to P_Q$ and $\Delta'_Q \to \Delta_Q$. The proof is complete on noting that trace induces an isomorphism $H_G(V'_1(Q; N))_{\ffrm_{G, Q}} \otimes_{\cO[\Delta'_Q] / (p^N)} \cO[\Delta_Q / (p^N)] \cong H_G(V_1(Q; N))_{\ffrm_{G, Q}}$ and that the map $P'_Q \to P_Q$ factors through an isomorphism  $P'_Q \otimes_{\cO[\Delta'_Q]} \cO[\Delta_Q] \cong P_Q$.
\end{proof}
We are now ready to prove Theorem \ref{thm_automorphy_of_symmetric_power}. We will need to treat the cases $p > 2$ and $p = 2$ separately.
\begin{proof}[Proof of Theorem \ref{thm_automorphy_of_symmetric_power}, case $p > 2$]
Define $H_G = H_G(V_0)_{\ffrm_G}$ and $H_D = H_D(U_0)_{\ffrm_D}$.	The proof of the theorem will be based on the following proposition:
\begin{proposition}\label{prop_construction_of_patched_data_p_odd}
We can find an integer $q \geq 0$ with the following property: let $W_\infty = \cO \llbracket Y_1, \dots, Y_q, Z_1, \dots, Z_{4 |S \cup S_\infty| -1 } \rrbracket$. Then we can find the following data:
\begin{enumerate}
\item Complete Noetherian local $W_\infty$-algebras $P_\infty$, $R_\infty$ equipped with isomorphisms $P_\infty \otimes_{W_\infty} \cO \cong P$ and $R_\infty \otimes_{W_\infty} \cO \cong R$ in $\cC_\cO$.
\item A surjection $R_{loc} \llbracket X_1, \dots, X_g \rrbracket \to R_\infty$ in $\cC_\cO$, where $g = q + |S \cup S_\infty| - 1$.
\item A $P_\infty$-module $H_{G,\infty}$ and an $R_\infty$-module $H_{D,\infty}$ such that both $H_{G,\infty}$, $H_{D,\infty}$ are finite free $W_\infty$-modules, complete with isomorphisms $H_{G,\infty} \otimes_{W_\infty} \cO \cong H_G$ (as $P$-module) and $H_{D,\infty} \otimes_{W_\infty} \cO \cong H_D$ (as $R$-module).
\item A morphism $P_\infty \to R_\infty$ of $\cO$-algebras making the diagram
\[ \xymatrix{ P_\infty \ar[r] \ar[d] &  R_\infty \ar[d] \\
P \ar[r] & R } \]
commute.
\end{enumerate}
\end{proposition}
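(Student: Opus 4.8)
The plan is to run the Taylor--Wiles--Kisin patching method, gluing the spaces $H_G$ and $H_D$ together \emph{simultaneously} over suitably framed deformation rings; the only non-routine ingredient is the choice of Taylor--Wiles data. Using the hypothesis that $\operatorname{Proj}\overline{r}(G_F)$ contains $\PSL_2(\bF_{p^a})$ with $p^a > \max(5, 2n-1)$ --- the bound $p^a > 5$ guaranteeing that $\overline{r}$ satisfies the image hypotheses needed to run the Taylor--Wiles argument at all --- one shows that there is a fixed integer $q \geq 0$ such that for every $N \geq 1$ there exists a Taylor--Wiles datum $(Q_N, \widetilde{Q}_N, (\alpha_v, \beta_v)_{v \in Q_N})$ of level $N + \max(m_D, m_G)$ with $|Q_N| = q$ for which the dual Selmer group of $\ad^0\overline{r}(1)$ (for the deformation problem with ramification allowed at $Q_N$) vanishes. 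The feature special to the present situation is that one can arrange at the same time the stronger condition built into our notion of Taylor--Wiles datum, namely that $\alpha_v^{n-1}, \alpha_v^{n-2}\beta_v, \dots, \beta_v^{n-1}$ be pairwise distinct for each $v \in Q_N$: this comes down to choosing the Frobenius elements $\Frob_v$ so that the ratio $\alpha_v/\beta_v$ of their eigenvalues under $\overline{r}$ has multiplicative order $> 2n-1$, which is possible precisely because $p^a > 2n-1$ and the image of $\overline{r}$ is large (this is the reason for imposing the condition). The standard computation with the Greenberg--Wiles formula then gives, for each $N$, a surjection $R_{loc} \llbracket X_1, \dots, X_g \rrbracket \twoheadrightarrow R_{Q_N}^\square$ with $g = q + |S \cup S_\infty| - 1$, and, since $\overline{r}$ is absolutely irreducible, $R_{Q_N}^\square \cong R_{Q_N}\llbracket Z_1, \dots, Z_{4|S\cup S_\infty|-1}\rrbracket$.

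Next one transfers this to the automorphic modules and frames the unitary side compatibly. On the $\GL_2$-side, set $\widetilde{H}_{D, N} = H_D(U_1(Q_N; N))_{\ffrm_{D, Q_N}} \otimes_{R_{Q_N}} R_{Q_N}^\square$; by Proposition \ref{prop_automorphic_forms_on_D_at_level_Q} this is an $R_{Q_N}^\square$-module, finite free over $\cO[\Delta_{Q_N}/(p^N)]\llbracket Z_1, \dots, Z_{4|S\cup S_\infty|-1}\rrbracket$, and killing the $Z_i$ together with the augmentation $\cO[\Delta_{Q_N}/(p^N)]\to\cO$ recovers $H_D$ as an $R$-module. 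On the unitary side, put $P_{Q_N}^\square = P_{Q_N}\llbracket Z_1, \dots, Z_{4|S\cup S_\infty|-1}\rrbracket$ and extend the universal morphism $P_{Q_N}\to R_{Q_N}$ to $P_{Q_N}^\square \to R_{Q_N}^\square$ by sending the $Z_i$ to the framing generators, and set $\widetilde{H}_{G, N} = H_G(V_1(Q_N; N))_{\ffrm_{G, Q_N}} \otimes_{P_{Q_N}} P_{Q_N}^\square$, which by Proposition \ref{prop_automorphic_forms_on_G_at_level_Q} is a $P_{Q_N}^\square$-module, finite free over $\cO[\Delta_{Q_N}/(p^N)]\llbracket Z_1, \dots, Z_{4|S\cup S_\infty|-1}\rrbracket$ with analogous coinvariants $H_G$ as a $P$-module. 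Finally, choosing for each $N$ generators of the groups $k(v)^\times(p)/(p^N)$ for $v \in Q_N$ yields surjections $\cO\llbracket Y_1, \dots, Y_q\rrbracket \twoheadrightarrow \cO[\Delta_{Q_N}/(p^N)]$, through which all of the above become data over quotients of $W_\infty$ by increasing powers of $\ffrm_{W_\infty}$.

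It remains to apply the patching formalism of \cite{Kis09, MR2551765} (a diagonal argument, or an ultraproduct construction) to the system over $N$ of diagrams $P_{Q_N}^\square \to R_{Q_N}^\square$, equipped with the modules $\widetilde{H}_{G, N}$, $\widetilde{H}_{D, N}$ and the surjections $R_{loc}\llbracket X_1, \dots, X_g\rrbracket \twoheadrightarrow R_{Q_N}^\square$. This produces complete Noetherian local $W_\infty$-algebras $P_\infty, R_\infty$ with $R_\infty$ a quotient of $R_{loc}\llbracket X_1, \dots, X_g\rrbracket$, a morphism $P_\infty \to R_\infty$ of $\cO$-algebras, a finite free $W_\infty$-module $H_{G,\infty}$ carrying a $P_\infty$-action and a finite free $W_\infty$-module $H_{D,\infty}$ carrying an $R_\infty$-action, together with isomorphisms $P_\infty \otimes_{W_\infty}\cO \cong P$, $R_\infty \otimes_{W_\infty}\cO \cong R$, $H_{G,\infty}\otimes_{W_\infty}\cO \cong H_G$ (of $P$-modules) and $H_{D,\infty}\otimes_{W_\infty}\cO \cong H_D$ (of $R$-modules). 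Here setting the $Y_i$ to zero undoes the Taylor--Wiles level, using $R_{Q_N}\otimes_{\cO[\Delta_{Q_N}]}\cO\cong R$ and $P_{Q_N}\otimes_{\cO[\Delta_{Q_N}]}\cO\cong P$ (the latter following from the construction of \cite{New19a}), and setting the $Z_i$ to zero undoes the framing; commutativity of the square and compatibility of the module structures with the ring maps are inherited from finite level. The one point requiring care is the remark made after the definition of $P_Q$: when $n$ is odd the maps $P_{Q_N}\to R_{Q_N}$ are $\cO[\Delta_{Q_N}]$-linear only up to the squaring endomorphism of $\Delta_{Q_N}$. This is harmless here, since we only assert that $P_\infty \to R_\infty$ is a morphism of $\cO$-algebras, not of $W_\infty$-algebras.

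I expect the genuine difficulty to lie entirely in the first step --- producing Taylor--Wiles data that simultaneously annihilate the dual Selmer group of $\ad^0\overline{r}(1)$ and realise the reinforced regular semisimplicity of $\Sym^{n-1}\overline{r}(\Frob_v)$, with $|Q_N|$ bounded independently of $N$; this is exactly what the image hypothesis on $\overline{r}$ is designed to make possible. Everything after that --- the framing, the module bookkeeping, and the passage to the limit --- is a routine, if lengthy, application of the standard patching machinery.
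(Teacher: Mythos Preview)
Your proposal is correct and follows essentially the same approach as the paper's proof: first Lemma \ref{lem_existence_of_TW_data_p_neq_2} (the existence of Taylor--Wiles data with the reinforced eigenvalue condition), then framing via $\cT \widehat{\otimes}_\cO (-)$, and finally a diagonalisation/pigeonhole patching argument \`a la \cite{Kis09}. One small numerical slip: the condition that $\alpha_v^{n-1}, \alpha_v^{n-2}\beta_v, \dots, \beta_v^{n-1}$ be distinct is equivalent to $\alpha_v/\beta_v$ having multiplicative order $> n-1$ (not $> 2n-1$), and this is exactly what $p^a > 2n-1$ buys, since a generator $t$ of $\bF_{p^a}^\times$ gives $\diag(t,t^{-1})$ in the projective image with $t^2$ of order $\geq (p^a-1)/2 > n-1$.
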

Before giving the proof of Proposition \ref{prop_construction_of_patched_data_p_odd}, we show how it implies the theorem. Let $\p \subset P$ denote the kernel of the morphism $P \to \cO$ associated to $t$. It is enough to show that $\p$ is in the support of $H_G$ as $P$-module. Indeed, this would imply (using \cite[Corollaire 5.3]{labesse} and the irreducibility of $\Sym^{n-1} r|_{G_K}$) the existence of a RACSDC automorphic representation $\Pi_n$ of $\GL_n(\A_K)$ such that $r_{\Pi_n, \iota} \cong \Sym^{n-1} (r|_{G_K})$. By descent (in the form of e.g.\ \cite[Lemma 1.5]{cy2}), this would imply the sought-after automorphy of $\Sym^{n-1} r$. Equivalently, we must show that if $\p_\infty$ is the pre-image of $\p$ under the morphism $P_\infty \to P$, then $\p_\infty$ is in the support of $H_{G,\infty}$ as $P_\infty$-module. (Since $\Supp_P H_G = \Supp_{P_\infty} H_{G,\infty} \cap \Spec P$, intersection taken inside $\Spec P_\infty$.)

The $P_\infty$-module $H_{G,\infty}$ is a Cohen--Macaulay module (i.e.\ it is finite and the dimension of its support is equal to its depth), since $H_{G,\infty}$ is a finite free $W_\infty$-module. Applying \cite[\href{https://stacks.math.columbia.edu/tag/0BUS}{Tag 0BUS}]{stacks-project}, it follows that each irreducible component of $\Supp_{P_\infty} H_{G,\infty}$ has dimension $q + 4|S \cup S_\infty|$. Similarly, we see that each irreducible component of $\Supp_{R_\infty}  H_{D,\infty}$ has dimension $q + 4|S \cup S_\infty|$. Since $R_\infty$ is a quotient of $R_{loc} \llbracket X_1, \dots, X_g \rrbracket $, a domain of Krull dimension $q + 4|S \cup S_\infty|$, we see that $R_{loc} \llbracket X_1, \dots, X_g \rrbracket \to R_\infty$ is an isomorphism, that $R_\infty$ is a domain, and that $H_{D,\infty}$ is a faithful $R_\infty$-module.

Let $\p' \subset P$ denote the kernel of the morphism $P \to \cO$ associated to $t'$, and let $\p'_\infty$ denote the pre-image of $\p'$ under the morphism $P_\infty \to P$. Then $\p'_\infty \in \Supp_{P_\infty} H_{G,\infty}$, by hypothesis, and therefore $\dim(P_{\infty, (\p'_\infty)}) \ge q + 4|S \cup S_\infty|-1$. We claim that the Zariski tangent space to the local ring $P_{\infty, (\p'_\infty)}$ has dimension at most $q + 4|S \cup S_\infty|-1$. Indeed, it suffices to note that the quotient $P_{\infty, (\p'_\infty)} / (Y_1, \dots, Y_q, Z_1, \dots, Z_{4 |S \cup S_\infty|-1}) = P_{(\p')}$ equals its residue field $E$, by the vanishing of the adjoint Bloch--Kato Selmer group of $r_{\Pi_n',\iota}$ (i.e.\ by \cite[Theorem A, Proposition 2.21, Example 2.34]{New19a}). We deduce that $P_{\infty, (\p'_\infty)}$ is a regular local ring of dimension $q + 4|S \cup S_\infty|-1$, so there is a unique irreducible component $Z$ of $\Spec P_\infty$ containing the point $\p'_\infty$, which has dimension $q + 4|S \cup S_\infty|$ and is contained in $\Supp_{P_\infty} H_{G,\infty}$.

Since $\Spec R_\infty$ is irreducible and the image of the morphism $\Spec R_\infty \to \Spec P_\infty$ contains $\p'_\infty$, we find that the morphism $\Spec R_\infty \to \Spec P_\infty$ factors through $Z$. In particular, $\p_\infty$ lies in $Z$, hence in $\Supp_{P_\infty} H_{G,\infty}$. This completes the proof of the theorem.

The proof of Proposition \ref{prop_construction_of_patched_data_p_odd} is based on a patching argument. We first prove a lemma which shows that there are enough Taylor--Wiles data. The argument is very similar (and essentially identical in the case $n = 2$) to the proof of \cite[Theorem 2.49]{DDT}. We spell out the details here just to show that the condition that the numbers $\alpha_v^{n-i} \beta_v^{i-1}$ ($i = 1, \dots, n$) are distinct does not cause any difficulty.
\begin{lemma}\label{lem_existence_of_TW_data_p_neq_2}
Let $q = \dim_k H^1(F_S  / F, \ad^0 \overline{r}(1))$, and let $g = q + |S \cup S_\infty|-1$. Then for any $N \geq 1$, we can find a Taylor--Wiles datum $(Q, \widetilde{Q}, (\alpha_v, \beta_v)_{v \in Q})$ of level $N  \geq 1$ such that there is a surjection $R_{loc} \llbracket X_1, \dots, X_g \rrbracket \to R_Q^{\square}$ of  $R_{loc}$-algebras.
\end{lemma}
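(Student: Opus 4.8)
The plan is to run the Taylor--Wiles argument in the form of \cite[Theorem 2.49]{DDT}, keeping track of the extra requirement that for each $v \in Q$ the elements $\alpha_v^{n-i}\beta_v^{i-1}$ ($i = 1, \dots, n$) be pairwise distinct; this is what the large-image hypothesis on $\overline{r}$ is for, and the numerical condition $p^a > \max(5, 2n-1)$ is exactly what makes it possible.

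First I would reduce to a purely cohomological statement. For a finite set $Q$ of places disjoint from $S$, let $\cL_Q$ be the Selmer condition on $\ad^0\overline{r}$ which is the tangent space of $R_v$ at each $v \in S \cup S_\infty$ and the usual Taylor--Wiles condition at each $v \in Q$, and let $\cL_Q^\perp$ be the annihilator condition on $\ad^0\overline{r}(1)$. It suffices to find, for every $N \geq 1$, a Taylor--Wiles datum $(Q, \widetilde{Q}, (\alpha_v, \beta_v)_{v \in Q})$ of level $N$ with $|Q| = q$ and $H^1_{\cL_Q^\perp}(F_S/F, \ad^0\overline{r}(1)) = 0$. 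Indeed, given such a $Q$, the standard presentation of the framed global deformation ring over $R_{loc}$ --- obtained from the Greenberg--Wiles formula and Poitou--Tate duality, using that $H^0(G_F, \ad^0\overline{r}) = H^0(G_F, \ad^0\overline{r}(1)) = 0$ by the large-image hypothesis --- exhibits $R_Q^\square$ as a quotient of $R_{loc}\llbracket X_1, \dots, X_g\rrbracket$ with $g = q + |S \cup S_\infty| - 1$, exactly as in \cite[Theorem 2.49]{DDT}. The eigenvalue condition on the $(\alpha_v, \beta_v)$ plays no role here.

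The datum $Q$ is built up one place at a time. Suppose $Q_0$ consists of Taylor--Wiles places of level $N$, each satisfying the eigenvalue condition, and that $H^1_{\cL_{Q_0}^\perp}(F_S/F, \ad^0\overline{r}(1)) \neq 0$; choose a nonzero class $[\phi]$. Set $L = \overline{F}^{\ker(\ad^0\overline{r})} \cdot K \cdot F(\zeta_{p^N})$, so that $G_L \subseteq G_K \cap G_{F(\zeta_{p^N})}$, the restriction $\phi|_{G_L} \colon G_L \to \ad^0\overline{r}(1)$ is a $G_F$-equivariant homomorphism, and (by large image) $H^1(\Gal(L/F), \ad^0\overline{r}(1)) = 0$, so $\phi|_{G_L} \neq 0$; since $p$ is odd, $\ad^0\overline{r}(1)$ is an irreducible $\overline{r}(G_F)$-module, and equivariance then forces $\{\phi(\tau) : \tau \in G_L\}$ to span $\ad^0\overline{r}(1)$. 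I would then seek $\sigma \in G_F$ with: (i) $\sigma$ acting trivially on $K$ and fixing $\zeta_{p^N}$; (ii) $\overline{r}(\sigma)$ regular semisimple with eigenvalue ratio of order $\geq n$, so that if $\Frob_v$ is conjugate to $\sigma$ then the eigenvalues $\alpha_v, \beta_v$ of $\overline{r}(\Frob_v)$ (which lie in $k$ by our choice of coefficient field) have $\alpha_v^{n-i}\beta_v^{i-1}$ pairwise distinct, and in particular $\alpha_v \neq \beta_v$; (iii) the component of $\phi(\sigma)$ along a nontrivial eigenspace of $\overline{r}(\sigma)$ on $\ad^0\overline{r}(1)$ nonzero --- the component that controls the Taylor--Wiles local condition at $v$. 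For (i)--(ii): since $\PSL_2(\FF_{p^a}) \subseteq \operatorname{Proj}\overline{r}(G_F)$ and $K\cdot F(\zeta_{p^N})/F$ is abelian, the subgroup $\operatorname{Proj}\overline{r}(G_K \cap G_{F(\zeta_{p^N})})$ is normal in $\operatorname{Proj}\overline{r}(G_F)$ with abelian quotient, hence contains the perfect group $\PSL_2(\FF_{p^a})$; a split maximal torus of $\PSL_2(\FF_{p^a})$ is cyclic of order $(p^a-1)/2 \geq n$ --- this inequality being exactly what $p^a > \max(5, 2n-1)$ supplies, the division by $2$ reflecting the fact that $\det\overline{r}(\Frob_v) = 1$ forces the eigenvalue ratio to be a square --- so it contains an element whose eigenvalue ratio has order $\geq n$; and since $\sigma$ fixes $\zeta_p$ we automatically get $\det\overline{r}(\sigma) = \overline{\epsilon}^{-1}(\sigma) = 1$, matching $q_v \equiv 1 \pmod p$. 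For (iii): fix $\sigma_0$ satisfying (i)--(ii); since $\{\phi(\tau)\}$ spans $\ad^0\overline{r}(1)$, some $\tau \in G_L$ has $\phi(\tau)$ with nonzero component along the chosen eigenspace, and since $\tau$ acts trivially on $\ad^0\overline{r}(1)$ we have $\phi(\tau^m\sigma_0) = m\,\phi(\tau) + \phi(\sigma_0)$, so choosing $m$ suitably and replacing $\sigma_0$ by $\tau^m\sigma_0$ (which alters neither $\overline{r}(\sigma_0)$ nor (i)--(ii)) secures (iii). Finally, Chebotarev applied to the finite extension $M/F$ cut out by $\overline{r}$, $\zeta_{p^N}$, $K$ and $\ker(\phi|_{G_L})$ yields infinitely many places $v \notin S \cup Q_0$, split in $K$, with $\Frob_v$ in the conjugacy class of $\sigma$; each such $v$ is a Taylor--Wiles place of level $N$ satisfying the eigenvalue condition, and the standard computation (as in \cite[Theorem 2.49]{DDT}) gives $\dim H^1_{\cL_{Q_0 \cup \{v\}}^\perp}(F_S/F, \ad^0\overline{r}(1)) = \dim H^1_{\cL_{Q_0}^\perp}(F_S/F, \ad^0\overline{r}(1)) - 1$. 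Iterating $q$ times produces the required datum.

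I expect the only real subtlety --- and the only place the argument truly departs from \cite[Theorem 2.49]{DDT} --- to be the simultaneous imposition of (ii) and (iii): one must be sure that the large image genuinely persists on restriction to $G_K \cap G_{F(\zeta_{p^N})}$ for all $N$, so that a regular semisimple element of eigenvalue-ratio order $\geq n$ is available there, and that the eigenspace condition (iii) can then be layered on top. Both are consequences of the simplicity of $\PSL_2(\FF_{p^a})$ and the absence of proper subgroups of small index (valid since $p^a > 5$), together with the irreducibility of $\ad^0\overline{r}(1)$; the rest is the routine bookkeeping of the Taylor--Wiles method.
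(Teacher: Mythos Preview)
Your approach coincides with the paper's: both reduce to killing one dual Selmer class at a time via Chebotarev, use simplicity of $\PSL_2(\bF_{p^a})$ to see that it persists in $\operatorname{Proj}\overline{r}(G_{F(\zeta_{p^N})})$, pick there a regular semisimple $\sigma$ with eigenvalue ratio of order $\geq (p^a-1)/2 > n-1$, and then multiply by some $\tau$ in the kernel of the adjoint action to control $\phi(\sigma)$, relying on the vanishing of $H^1(\PSL_2(\bF_{p^a}),\ad^0)$ and irreducibility of $\ad^0$ so that the values $\phi(\tau)$ span.

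One slip to fix: in your condition (iii), the component of $\phi(\sigma)$ that must be nonzero is the one in the \emph{$\sigma$-invariant} line of $\ad^0\overline{r}(1)$, not in a nontrivial eigenspace. Since $q_v\equiv 1\pmod p$, the image of the unramified class $[\phi]$ in $H^1(F_v,\ad^0\overline{r}(1))$ is $\phi(\Frob_v)\bmod(\Frob_v-1)\ad^0\overline{r}(1)$, and $(\Frob_v-1)\ad^0\overline{r}(1)$ is exactly the sum of the nontrivial eigenspaces; the paper phrases the condition simply as $\phi(\sigma)\notin(\sigma-1)\ad^0\overline{r}(1)$. With this correction your adjustment-by-$\tau$ step works unchanged (the paper uses $\tau\sigma$ rather than $\tau^m\sigma_0$, but either does the job).
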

\begin{proof}
	Let $(Q, \widetilde{Q}, (\alpha_v, \beta_v)_{v \in Q})$ be a Taylor--Wiles datum. A standard computation (compare e.g.\ \cite[Proposition 3.2.5]{Kis09}, \cite[Proposition 5.10]{Tho16}), shows that there is a surjection $R_{loc} \llbracket X_1, \dots, X_g \rrbracket \to R_Q^{\square}$ where $g = \lambda_Q + |Q| + |S \cup S_\infty| - 1$  and $\lambda_Q$ is the dimension of the group
	\[ \ker\left(H^1(F_S / F, \ad \overline{r}^0(1)) \to \prod_{v \in Q} H^1(F_v, \ad^0 \overline{r}(1)) \right). \]
	We therefore need to show that for any $N \geq 1$, we can find a Taylor--Wiles datum of level $N$ such that $|Q| = q$ and $\lambda_Q = 0$. By induction, and the Chebotarev density theorem, it is enough to show the following claim:
	\begin{itemize}
		\item Let $[\phi] \in H^1(F_S / F, \ad^0 \overline{r}(1))$ be non-zero. Then for any $N \geq 1$, there exists $\sigma \in G_{F(\zeta_{p^N})}$ such that $\phi(\sigma) \not\in (\sigma - 1) \ad^0 \overline{r}(1)$ and the eigenvalues $\alpha,\beta$ of $\rbar(\sigma)$ satisfy $(\alpha/\beta)^i \neq 1$ for $i = 1,\ldots,n-1$.
	\end{itemize}
	Let $L / F$ denote the extension cut out by $\operatorname{Proj} \overline{r}$. Recall our assumption that $\Gamma = \Gal(L / F)$ is conjugate in $\PGL_2(\overline{\bF}_p)$ either to $\PSL_2({\bF}_{p^a})$ or $\PGL_2(\bF_{p^a})$ for some $p^a > \max(5, 2n-1)$. In either case $[\Gamma, \Gamma] \cong\PSL_2({\bF}_{p^a})$ is a non-abelian simple group that acts absolutely irreducible on $\ad^0$. Moreover, \cite[Table (4.5)]{Cli75} shows that $H^1(\PSL_2({\bF}_{p^a}), \ad^0) = 0$. The inflation-restriction exact sequence implies that $H^1(L(\zeta_{p^N}) / F, \ad^0 \overline{r}(1)) = 0$. Another application of inflation-restriction shows that $\Res_{L(\zeta_{p^N}) / F} [\phi] \neq 0$; we may identify this restriction with a non-zero $G_F$-equivariant homomorphism $f : G_{L(\zeta_{p^N})} \to \ad^0 \overline{r}(1)$. Since $\ad^0 \overline{r}$ is absolutely irreducible as a $k[\PSL_2(\bF_{p^a})]$-module, $f(G_{L(\zeta_{p^N})})$ spans $\ad^0 \overline{r}(1)$ as $k$-vector space.
	
	To prove the claim above, choose any $\sigma \in G_{F(\zeta_{p^N})}$ such that the eigenvalues $\alpha, \beta \in k$ of $\overline{r}(\sigma)$ satisfy $(\alpha / \beta)^{i} \neq 1$ for $i = 1, \dots, n-1$. (This is possible since if $t$ generates $\bF_{p^a}^\times$, then $t^2$ has multiplicative order $\geq (p^a - 1) / 2 > n-1$ and $\operatorname{Proj} \overline{r}(G_{F(\zeta_{p^N})})$ contains an element which is conjugate to $\diag(t, t^{-1}) = \diag(t^2, 1)$.)
	
	If $\phi(\sigma) \not\in (\sigma-1) \ad^0 \overline{r}(1)$ then we're done. Suppose instead that $\phi(\sigma) \in (\sigma-1) \ad^0 \overline{r}(1)$. Since $f(G_{L(\zeta_{p^N})})$ spans $\ad^0 \overline{r}(1)$, we can find $\tau \in G_{L(\zeta_{p^N})}$ such that $f(\tau) \not\in (\sigma-1) \ad^0 \overline{r}(1)$. Then $\tau \sigma \in G_{F(\zeta_{p^N})}$, $\operatorname{Proj} \overline{r}(\tau \sigma) = \operatorname{Proj} \overline{r}(\sigma)$, and the cocycle relation shows that 
	\[ \phi(\tau \sigma) = \phi(\tau) + \phi(\sigma) \not\in (\tau \sigma-1) \ad^0 \overline{r}(1). \]
	In either case we have established the claim; this completes the proof.
\end{proof}
Now we give the proof of Proposition \ref{prop_construction_of_patched_data_p_odd}. Let $q = \dim_k H^1(F_S  / F, \ad^0 \overline{r}(1))$ and $g = q + |S \cup S_\infty| - 1$. For each $N \geq 1$, fix a Taylor--Wiles datum $(Q_N, \widetilde{Q}_N, (\alpha_v, \beta_v)_{v \in Q_N})$ of level $N + \max(m_D, m_G)$ such that there exists a surjection $R_{loc}\llbracket X_1, \dots, X_g \rrbracket \to R_{Q_N}^\square$ of $R_{loc}$-algebras. Fix $v_0 \in S$ and define $\cT = \cO \llbracket \{ Z_{v, i, j} \}_{v \in S \cup S_\infty, 1 \leq i, j \leq 2} \rrbracket / (Z_{v_0, 1, 1})$. We view $\cT$ as an augmented $\cO$-algebra via the augmentation which sends each $Z_{v, i, j}$ to $0$. Choose for each $N \geq 1$ a surjection $\cO \llbracket Y_1, \dots, Y_q \rrbracket \to \cO[\Delta_{Q_N} / (p^N)]$. Then we get surjections $W_\infty  \to \cT \widehat{\otimes}_\cO \cO[\Delta_{Q_N}/(p^N)] \to \cO[\Delta_{Q_N} / (p^N)]$.

Define $R_N = R^\square_{Q_N}$, $H_{D,N} = \cT \widehat{\otimes}_\cO H_D(U_1(Q; N))_{\ffrm_{D,Q_N}}$, $P_N = \cT \widehat{\otimes}_\cO P_{Q_N}$, $H_{G,N} = \cT \widehat{\otimes}_\cO H(V_1(Q; N))_{\ffrm_{G, Q_N}}$.  We fix a representative $r^{univ} : G_F \to \GL_2(R)$ for the universal deformation over $R$, and representatives $r_{Q_N}^{univ} : G_F \to \GL_2(R_{Q_N})$ for the universal deformations over $R_{Q_N}$ lifting $r^{univ}$ for each $N \geq 1$. These choices determine isomorphisms $R^\square \cong \cT \widehat{\otimes}_\cO R$ and $R_{N} \cong \cT \widehat{\otimes}_\cO R_{Q_N}$, which classify the universal $S \cup S_\infty$-framed liftings $(r^{univ}, \{ 1 + (Z_{v, i,j}) \}_{v \in S \cup S_\infty})$ (resp. $(r_{Q_N}^{univ}, \{ 1 + (Z_{v, i,j}) \}_{v \in S \cup S_\infty})$). Thus each ring $R_N, P_N$ has a $W_\infty$-algebra structure, and there are isomorphisms $R_N \otimes_{W_\infty} \cO \cong R$, $P_N \otimes_{W_\infty} \cO \cong P$. Moreover, Proposition \ref{prop_automorphic_forms_on_D_at_level_Q} and Proposition \ref{prop_automorphic_forms_on_G_at_level_Q} show that the modules $H_{G,N}, H_{D,N}$ are finite free as $\cT \widehat{\otimes}_\cO \cO[\Delta_{Q_N} / (p^N)]$-modules. Finally, completed tensor product with $\cT$ promotes the morphism $P_{Q_N} \to R_{Q_N}$ to a morphism $P_N \to R_N$. 

To patch these objects together we now carry out a diagonalisation argument along very similar lines to the proof of \cite[Proposition 3.3.1]{Kis09}. By \cite[Lemma 2.16]{New19a}, we can find an integer  $g_0  \geq  0$ and for each $N \geq 1$ a surjection $\cO \llbracket X_1, \dots, X_{g_0} \rrbracket \to P_N$ of $\cO$-algebras. Let $\fra \subset W_\infty$ denote the kernel of the augmentation $W_\infty \to \cO$, and for any $N \geq 1$ let $\fra_N$ denote the kernel of the map $W_\infty \to \cO[\Delta_{Q_N}/(p^N)]$. Choose a sequence $(\frb_N)_{N \geq 1}$ of open ideals of $W_\infty$ satisfying the following conditions:
\begin{itemize}
\item For each $N \geq 1$, $\frb_{N+1} \subset \frb_N$.
\item For each $N \geq 1$, $\fra_N \subset \frb_N$.
\item $\cap_{N \geq 1} \frb_N = 0$.
\end{itemize}
Let $s = \max(\dim_E H_D[1/p], \dim_E H_G[1/p])$. Let $r_N = \operatorname{length}_{W_\infty} (W_\infty / \frb_N)^s$. Then the sequence $(r_N)_{N \geq 1}$ is non-decreasing and 
\[ \operatorname{length}_{R_N}  H_{D, N} / (\frb_N) = \operatorname{length}_{W_\infty}  H_{D, N} / (\frb_N) \leq  r_N, \]
so $H_{D, N} / (\frb_N)$ has a natural structure of $R_N / \ffrm_{R_N}^{r_N}$-module. Similarly, $H_{G, N} / (\frb_N)$ has a natural structure of $P_N / \ffrm_{P_N}^{r_N}$-module.
Thus for every pair of integers $N \geq M \geq 1$ we have, by passage to quotient from the data constructed above, a diagram (of rings and modules)
\[ \xymatrix@C=1pc @R=.6pc{ &H_{D, N} / (\frb_M) \ar[r]  &H_{D} / (\frb_M)  \\ R_{loc}\llbracket X_1, \dots, X_g \rrbracket \widehat{\otimes}_\cO W_\infty \ar[r] &\ar@(ul,ur)[] R_N / \ffrm_{R_N}^{r_M} \ar[r] &\ar@(ul,ur)[] R / \ffrm_{R}^{r_M} \\ & & \\ 
\cO \llbracket X_1, \dots, X_{g_0} \rrbracket  \widehat{\otimes}_\cO W_\infty \ar[r] & P_N / \ffrm_{P_N}^{r_M} \ar@(dl,dr)[] \ar[r]  \ar[uu] & P / \ffrm_P^{r_M}\ar@(dl,dr)[]  \ar[uu] \\ & H_{G, N} / (\frb_M)\ar[r]  &H_{G} / (\frb_M)} \]
where the horizontal arrows are all surjective. Keeping $M$ fixed, the cardinalities of the rings and modules appearing in this diagram (excepting those in the first column) are uniformly bounded as $N$ varies. By the pigeonhole principle, we can therefore find an increasing sequence $(N_M)_{M \geq 1}$ of integers $N_M \geq M$ such that for each $M \geq 1$ there is a commutative diagram of $\cO$-algebras
\[  \xymatrix@C=.6pc @R=.6pc{ R_{loc}\llbracket X_1, \dots, X_g \rrbracket \widehat{\otimes}_\cO W_\infty\ar[rr] \ar[dr] & & R_{N_{M+1}} / \ffrm_{R_{N_{M+1}} }^{r_M}  \ar[dl] \ar[rr]  & &  R / \ffrm_{R}^{r_M} \ar[dl] \\
&R_{N_{M}}  / \ffrm_{R_{N_{M}} }^{r_M} \ar[rr] & & R / \ffrm_{R}^{r_M} \\
\cO \llbracket X_1, \dots, X_{g_0} \rrbracket  \widehat{\otimes}_\cO W_\infty \ar[rr] \ar[dr] && P_{N_{M+1}}  / \ffrm_{P_{N_{M+1}} }^{r_M} \ar[uu] \ar[rr] \ar[dl] & &  P / \ffrm_P^{r_M} \ar[uu] \ar[dl] \\
& P_{N_{M}}  / \ffrm_{P_{N_{M}} }^{r_M} \ar[uu] \ar[rr] & &  P / \ffrm_P^{r_M},\ar[uu]} \]
where the morphisms from the back square of the cube to the front are isomorphisms, and there are commutative diagrams of modules
\[ \xymatrix@C=.6pc @R=.6pc{& \ar[dl] H_{D, {N_{M+1}} } / (\frb_M) \ar[rr] & & H_D / (\frb_M) \ar[dl] \\
H_{D, {N_{M}} } / (\frb_M) \ar[rr] & & H_D / (\frb_M) & \\
& \ar[dl] H_{G, {N_{M+1}} } / (\frb_M) \ar[rr] & & H_G / (\frb_M) \ar[dl] \\
H_{G, {N_{M}} } / (\frb_M) \ar[rr] & & H_G / (\frb_M), & } \]
compatible with the module structures arising from the previous commutative cube, and where the arrows from back to front are again isomorphisms. We define 
\[ R_\infty = \varprojlim_M R_{N_M} / \ffrm_{R_{N_M}}^{r_M} \text{ and }P_\infty = \varprojlim_M P_{N_M} / \ffrm_{P_{N_M}}^{r_M}. \]
and similarly
\[ H_{D, \infty} = \varprojlim_M H_{D, N_M} / (\frb_M)  \text{ and }H_{G, \infty} = \varprojlim_M H_{G, N_M} / (\frb_M). \]
By passage to inverse limit, there is a diagram (of rings and modules)
\[ \xymatrix@C=1pc @R=.6pc{ &H_{D, \infty} \ar[r]  &H_{D}   \\ R_{loc}\llbracket X_1, \dots, X_g \rrbracket \widehat{\otimes}_\cO W_\infty \ar[r] &\ar@(ul,ur)[] R_\infty  \ar[r] &\ar@(ul,ur)[] R \\ & & \\ 
\cO \llbracket X_1, \dots, X_{g_0} \rrbracket  \widehat{\otimes}_\cO W_\infty \ar[r] &P_\infty\ar@(dl,dr)[] \ar[r]  \ar[uu] & P \ar@(dl,dr)[]  \ar[uu] \\ & H_{G, \infty} \ar[r]  &H_{G}. } \]
To complete the proof of the proposition, it remains to show that these objects have the following properties:
\begin{itemize}
\item The morphism $R_{loc} \llbracket X_1, \dots, X_g \rrbracket \to R_\infty$ is surjective. 
\item The modules $H_{D, \infty}$, $H_{G, \infty}$ are finite free over $W_\infty$ and the morphism $H_{D, \infty} \to H_D$ (resp. $H_{G, \infty} \to H_G$) factors over an isomorphism $H_{D, \infty} / (\fra) \cong H_D$ (resp. $H_{G, \infty} / (\fra) \cong H_G$).
\item The morphism $R_\infty \to R$ (resp. $P_\infty \to P$) factors over an isomorphism $R_\infty / (\fra) \cong R$ (resp. $P_\infty / (\fra) \cong P$).
\end{itemize}
The first point holds because $R_{loc} \llbracket X_1, \dots, X_g \rrbracket$ is a complete local ring and each map $R_{loc} \llbracket X_1, \dots, X_g \rrbracket \to R_{N_M} / \ffrm_{R_{N_M}}^{r_M}$ is surjective. The second follows from e.g.\ Nakayama's lemma and the freeness of the modules $H_{D, N_M} / \frb_M$, $H_{G, N_M} / \frb_M$. For the third point, we recall that $R_{N_M} / (\fra) = R$, and consequently $R_{N_M} / (\ffrm_{R_{N_M}}^{r_M}, \fra) = R / \ffrm_R^{r_M}$. We therefore need to show that the natural map
\[ \left( \varprojlim_M R_{N_M} / \ffrm_{R_{N_M}}^{r_M} \right) / (\fra) \to \varprojlim_M R_{N_M} / (\ffrm_{R_{N_M}}^{r_M}, \fra) \]
is an isomorphism, or equivalently that the ideal $\fra R_\infty$ of $R_\infty$ is closed in the $\ffrm_{R_\infty}$-adic topology. This is true since $R_\infty$ is a Noetherian ring. The same proof applies to the ring $P_\infty$.
\end{proof}
Now we treat the case $p = 2$.
\begin{proof}[Proof of Theorem \ref{thm_automorphy_of_symmetric_power}, case $p = 2$]
Define $H_G = H(V_0)_{\ffrm_G}$ and $H_D = H_D(U_0)_{\ffrm_D}$. The proof of the theorem in this case will be based on the following proposition, incorporating ideas from the 2-adic  patching argument given in \cite{Kha09a, MR2551765}. If $A \in \cC_\cO$, we follow \cite[\S 2.1]{Kha09a} in writing $\operatorname{Sp}_A$ for the functor $\operatorname{Sp}_A : \cC_\cO \to \operatorname{Sets}$ represented by $A$. We write $\widehat{\bG}_m : \cC_\cO \to \operatorname{Groups}$ for the functor which sends $A \in \cC_\cO$ to the group $\ker(A^\times \to (A / \ffrm_A)^\times)$.
\begin{proposition}\label{prop_construction_of_patched_data_p_equals_2}
We can find an integer $q \geq |S \cup S_\infty| - 2$ with the following property: let $W_\infty = \cO \llbracket Y_1, \dots, Y_q, Z_1, \dots, Z_{4 |S \cup S_\infty| - 1} \rrbracket$ and let $\gamma = 2 - |S \cup S_\infty| + q$. 
Then we can find the following data:
\begin{enumerate}
\item Complete Noetherian local $W_\infty$-algebras $P_\infty$, $R_\infty$ equipped with isomorphisms $P_\infty \otimes_{W_\infty} \cO \cong P$ and $R_\infty \otimes_{W_\infty} \cO \cong R$ in $\cC_\cO$.
\item A complete Noetherian local $\cO$-algebra $R'_\infty$ and surjections $R_{loc} \llbracket X_1, \dots, X_g \rrbracket \to R'_\infty$ and $R'_\infty \to R_\infty$ in $\cC_\cO$, where $g = 2q+1$.
\item A $P_\infty$-module $H_{G,\infty}$ and an $R_\infty$-module $H_{D,\infty}$ such that both $H_{G,\infty}$, $H_{D,\infty}$ are finite free $W_\infty$-modules, together with isomorphisms $H_{G,\infty} \otimes_{W_\infty} \cO \cong H_G$ (as $P$-module) and $H_{D,\infty} \otimes_{W_\infty} \cO \cong H_D$ (as $R$-module).
\item A morphism $P_\infty \to R_\infty$ of $\cO$-algebras making the diagram
\[ \xymatrix{ P_\infty \ar[r] \ar[d] &  R_\infty \ar[d] \\
P \ar[r] & R } \]
commute.
\item A free action of $\widehat{\bG}_m^\gamma$ on $\Sp_{R'_\infty}$ and a $\widehat{\bG}_m^\gamma$-equivariant morphism $\delta : \Sp_{R'_\infty} \to \widehat{\bG}_m^\gamma$, where $\widehat{\bG}_m^\gamma$ acts on itself by the square of the identity.
\end{enumerate}
These objects have the following additional properties:
\begin{enumerate}
\setcounter{enumi}{5}
\item We have $\delta^{-1}(1) = \Sp_{R_\infty}$. The induced action of $\widehat{\bG}_m^\gamma[2](\cO)$ on $R_\infty$ lifts to $H_{D,\infty}$. \label{property_actionlifts} 
\item There exists an action of $\widehat{\bG}_m^\gamma[2]$ on $\Sp_{P_\infty}$ such that the morphism $\Sp_{R_\infty} \to \Sp_{P_\infty}$ is $\widehat{\bG}_m^\gamma[2]$-equivariant,  and the induced action of $\widehat{\bG}_m^\gamma[2](\cO)$ on $P_\infty$ lifts to $H_{G,\infty}$.
\end{enumerate}
\end{proposition}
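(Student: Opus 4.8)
The plan is to rerun the Taylor--Wiles--Kisin patching argument from the proof of Proposition~\ref{prop_construction_of_patched_data_p_odd}, modified to accommodate $p = 2$ along the lines of the $2$-adic patching arguments of \cite{Kha09a, MR2551765}. The essential difficulty is that at $p = 2$ one cannot fix the determinant of a lift of $\overline{r}$ globally: twisting $\widetilde{r} \mapsto \widetilde{r} \otimes \chi$ by a character $\chi$ valued in $1 + \ffrm_A$ changes $\det \widetilde{r}$ by $\chi^2$, and $\chi \mapsto \chi^2$ is not surjective on the pro-$2$ group of such characters. So one would patch the framed deformation functor $\Def_Q^{\prime, \square}$, in which the determinant is pinned down only at the places of $S$, obtaining the ring $R'_\infty$, and recover the fixed-determinant ring $R_\infty$ as the fibre $\delta^{-1}(1)$ of a determinant map $\delta : \Sp_{R'_\infty} \to \widehat{\bG}_m^\gamma$. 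The pseudodeformation rings $P$, $P_Q$ and the Hecke modules are unchanged, but inherit the $\widehat{\bG}_m^\gamma[2]$-action coming from quadratic twists, which on pseudocharacters is $t \mapsto t \otimes (\chi^{n-1}|_{G_K})$, hence trivial when $n$ is odd.

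The first step is the analogue of Lemma~\ref{lem_existence_of_TW_data_p_neq_2} for $\Def_Q^{\prime, \square}$: for a suitable integer $q$ (the size of the Taylor--Wiles sets needed to kill the relevant dual Selmer group, governed now by $\ad \overline{r}$ rather than $\ad^0 \overline{r}$), for every $N \geq 1$ there is a Taylor--Wiles datum $(Q_N, \widetilde{Q}_N, (\alpha_v, \beta_v)_{v \in Q_N})$ of level $N$, with $|Q_N| = q$, admitting a surjection $R_{loc} \llbracket X_1, \dots, X_g \rrbracket \to R_{Q_N}^{\prime, \square}$ of $R_{loc}$-algebras with $g = 2q + 1$. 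The Chebotarev step is word-for-word the one in the excerpt: the large image hypothesis (including $p^a > \max(5, 2n-1)$) produces $\sigma \in G_{F(\zeta_{p^N})}$ with $\phi(\sigma) \notin (\sigma - 1) \ad \overline{r}(1)$ and with $(\alpha/\beta)^i \neq 1$ for $i = 1, \dots, n-1$, invoking $H^1(\PSL_2(\bF_{p^a}), \ad \overline{r}) = 0$ for $p^a > 5$ (which follows from \cite{Cli75} as in the odd case, via the exact sequence $0 \to \ad^0 \overline{r} \to \ad \overline{r} \to k \to 0$). The count $g = 2q + 1$ and the inequality $q \geq |S \cup S_\infty| - 2$ (equivalently $\gamma = 2 - |S \cup S_\infty| + q \geq 0$) come from the usual presentation of a framed global deformation ring over $R_{loc}$, now computed with $\ad \overline{r}$ and with the $p = 2$ shapes of the local rings $R_v$; the $\gamma$ extra generators of $R_{Q_N}^{\prime, \square}$ beyond the $q + |S \cup S_\infty| - 1$ of the fixed-determinant case are exactly the twisting directions.

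Next, for each $N$ fix such a datum of level $N + \max(m_D, m_G)$ and, with $\cT$ as in the odd case, set $R_N = R_{Q_N}^{\prime, \square}$, $R_N^\flat = R_{Q_N}^\square$, $P_N = \cT \widehat{\otimes}_\cO P_{Q_N}$, $H_{D, N} = \cT \widehat{\otimes}_\cO H_D(U_1(Q_N; N))_{\ffrm_{D, Q_N}}$ and $H_{G, N} = \cT \widehat{\otimes}_\cO H_G(V_1(Q_N; N))_{\ffrm_{G, Q_N}}$. Each $R_N$ carries a free action of $\widehat{\bG}_m^\gamma$ by twisting with a $\gamma$-dimensional family of characters of $G_{F, S \cup Q_N}$ valued in $1 + \ffrm_A$, compatibly with the framing and the local conditions at $S$, together with a $\widehat{\bG}_m^\gamma$-equivariant map $\delta_N : \Sp_{R_N} \to \widehat{\bG}_m^\gamma$ recording the determinant ($\widehat{\bG}_m^\gamma$ acting on itself by squaring), with $\delta_N^{-1}(1) = \Sp_{R_N^\flat}$ and $R_N^\flat \otimes_{W_\infty} \cO \cong R$. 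The subgroup $\widehat{\bG}_m^\gamma[2]$ preserves $\Sp_{R_N^\flat}$ and $\Sp_{P_N}$, and its action lifts to $H_{D, N}$ via $f \mapsto f \otimes \chi$ and, when $n$ is even, to $H_{G, N}$ via $f \mapsto f \otimes \theta_\chi$; the inputs are that these twists preserve the localisations at $\ffrm_{D, Q_N}$ and $\ffrm_{G, Q_N}$ (recorded in the excerpt, together with the lemma producing $\theta_\chi$) and that for $n$ odd the action on $P_N$ is trivial. By Propositions~\ref{prop_automorphic_forms_on_D_at_level_Q} and~\ref{prop_automorphic_forms_on_G_at_level_Q}, $H_{D, N}$ and $H_{G, N}$ are finite free over $\cT \widehat{\otimes}_\cO \cO[\Delta_{Q_N} / (p^N)]$, and completed tensor with $\cT$ promotes $P_{Q_N} \to R_{Q_N} \to R_{Q_N}^{\prime, \square}$ to $P_N \to R_N^\flat \to R_N$. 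One then runs the diagonalisation and pigeonhole argument of the proof of Proposition~\ref{prop_construction_of_patched_data_p_odd}, the only addition being that, since $\widehat{\bG}_m^\gamma$ and $\widehat{\bG}_m^\gamma[2]$ act through finite quotients on each finite truncation $R_{N_M} / \ffrm^{r_M}$ (and on the truncations of the modules), one refines the subsequence $(N_M)$ so as also to stabilise these actions and the maps $\delta_N$. Passing to the inverse limit produces $R'_\infty$, $R_\infty = \delta^{-1}(1)$, $P_\infty$, $H_{D,\infty}$, $H_{G,\infty}$, the free $\widehat{\bG}_m^\gamma$-action on $\Sp_{R'_\infty}$ and its equivariant $\delta$, and the $\widehat{\bG}_m^\gamma[2]$-action on $\Sp_{R_\infty}$ and $\Sp_{P_\infty}$ (with $\Sp_{R_\infty} \to \Sp_{P_\infty}$ equivariant) lifting respectively to $H_{D,\infty}$ and $H_{G,\infty}$; the surjection $R_{loc} \llbracket X_1, \dots, X_g \rrbracket \to R'_\infty$, the $W_\infty$-algebra structures, the finite freeness of $H_{D,\infty}$ and $H_{G,\infty}$ over $W_\infty$, and the isomorphisms after $\otimes_{W_\infty} \cO$ are obtained exactly as in the odd case (Noetherianness of the limit rings; Nakayama and freeness at finite level), and $P_\infty \to R_\infty$ is inherited from the $P_N \to R_N^\flat$.

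\textbf{Main obstacle.} The crux --- and the place where one must follow \cite{Kha09a, MR2551765} most carefully --- is making the $\widehat{\bG}_m^\gamma$-action and the determinant map $\delta$ on $\Sp_{R'_\infty}$ genuinely compatible with patching: one has to arrange the auxiliary data ($\cT$, the presentation of $R_{Q_N}^{\prime, \square}$ over $R_{loc}$, the Taylor--Wiles level) so that the twisting structure descends without loss to every finite-level truncation, stabilise it along the pigeonhole subsequence together with the rings and modules, and then check that the limiting action is free with $\delta^{-1}(1)$ exactly the fixed-determinant ring $R_\infty$. A secondary difficulty is the $p = 2$ Galois-cohomology bookkeeping behind the first step (the presentation count with $\ad \overline{r}$, the vanishing $H^1(\PSL_2(\bF_{p^a}), \ad \overline{r}) = 0$, and the resulting $g = 2q + 1$ and $\gamma \geq 0$). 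By contrast, lifting the quadratic-twist action to the Hecke modules is essentially formal once the lemma producing $\theta_\chi$ and the stability of $\ffrm_{D, Q}$ and $\ffrm_{G, Q}$ under quadratic twisting are available.
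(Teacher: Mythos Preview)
Your overall architecture is correct and matches the paper: patch the functor $\Def_Q^{\prime,\square}$ to get $R'_\infty$, recover $R_\infty$ as the fibre of a determinant map, and carry the quadratic-twist action on rings and Hecke modules through the patching. However, there is a genuine gap in your treatment of the ``first step'' (the analogue of Lemma~\ref{lem_existence_of_TW_data_p_neq_2}), and it propagates into your construction of the $\widehat{\bG}_m^\gamma$-action.

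You assert that the Chebotarev step is ``word-for-word'' the odd case, using $H^1(\PSL_2(\bF_{p^a}),\ad\overline r)=0$ via the sequence $0\to\ad^0\overline r\to\ad\overline r\to k\to 0$. In characteristic~$2$ this is not right: the scalars lie inside $\ad^0\overline r$, so $\ad^0$ is not irreducible and the sequence does not split; the $p=2$ cohomological bookkeeping is genuinely different and is the content of \cite[Lemma~5.10]{Kha09a}, which the paper simply cites (noting only that the stronger eigenvalue condition $\alpha_v^{n-i}\beta_v^{i-1}$ distinct goes through). More importantly, that lemma has a third conclusion you omit: for the chosen $Q$, if $\Theta_Q$ denotes the Galois group of the maximal abelian pro-$2$ extension of $F$ unramified outside $Q$ and $(S\cup S_\infty)$-split, then $\Theta_Q/(2^N)\cong(\bZ/2^N\bZ)^\gamma$. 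This is exactly what pins down the twisting group: the determinant of a point of $\Def_Q^{\prime,\square}$ differs from $\epsilon^{-1}$ by a character that is trivial at every $v\in S\cup S_\infty$, hence factors through $\Theta_Q$, so one gets a map $\delta_N:\Sp_{R'_N}\to\check\Theta_{Q_N}$ and an action of $\check\Theta_{Q_N}$ by twisting. The isomorphism $\Theta_{Q_N}/(2^N)\cong(\bZ/2^N\bZ)^\gamma$ is what allows one, after choosing compatible surjections, to patch these to an action of $\widehat{\bG}_m^\gamma$ on $\Sp_{R'_\infty}$ with $\delta^{-1}(1)=\Sp_{R_\infty}$. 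Your phrase ``a $\gamma$-dimensional family of characters'' and the remark that ``the $\gamma$ extra generators \dots\ are exactly the twisting directions'' gesture at this, but without the $\Theta_Q$ statement you have not explained why the twisting family has rank exactly $\gamma$ at every level $N$, which is what you need for the limit action to be by $\widehat{\bG}_m^\gamma$ (and free). Once you invoke \cite[Lemma~5.10]{Kha09a} in full, the rest of your sketch (the diagonalisation including the group actions, and the lifts to $H_{D,\infty}$, $H_{G,\infty}$ via $f\mapsto f\otimes\chi$ and $f\mapsto f\otimes\theta_\chi$) is essentially what the paper does, which in turn defers to \cite[Proposition~9.3]{Kha09a}.
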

Once again, we show how Proposition \ref{prop_construction_of_patched_data_p_equals_2} implies the theorem in this case before giving the proof of the proposition. Let $\p \subset P$ denote the kernel of the morphism $P \to \cO$ associated to $t$. It is again enough to show that $\p$ is in the support of $H_G$ as $P$-module, or equivalently that the pullback $\p_\infty \subset P_\infty$ of $\p$ is in the support of $H_{G,\infty}$ as $P_\infty$-module. 

The $P_\infty$-module $H_{G,\infty}$ is a Cohen-Macaulay module, and each irreducible component of $\Supp_{P_\infty} H_{G,\infty}$ has dimension $q + 4 |S \cup S_\infty|$. Similarly, each irreducible component of $\Supp_{R_\infty} H_{D,\infty}$ has dimension $q + 4 |S \cup S_\infty|$. 

Let $R_\infty^{inv} \subset R'_\infty$ denote the subring of invariants for the action of $\widehat{\bG}_m^\gamma$ (cf. \cite[\S 2.4]{Kha09a}). Then the morphism $\Sp_{R'_\infty} \to \Sp_{R^{inv}_\infty}$ is smooth of relative dimension $\gamma$ (one can apply \cite[Proposition 2.5]{Kha09a}, which is used for a very similar purpose in the proof of \cite[Proposition 9.3]{Kha09a}). On the other hand, $\Sp_{R_\infty} \to \Sp_{R_\infty^{inv}}$ is a torsor for the group $\widehat{\bG}_m^\gamma[2]$, showing that $R_\infty^{inv}$ has Krull dimension at least $q + 4 |S \cup S_\infty|$. We conclude that $\Spec R'_\infty$ has dimension at least $q + 4 |S \cup S_\infty| +\gamma = \dim R_{loc} \llbracket X_1, \dots, X_g \rrbracket$. Since $ R_{loc} \llbracket X_1, \dots, X_g \rrbracket$ is a domain, it follows that the map $ R_{loc} \llbracket X_1, \dots, X_g \rrbracket \to R'_\infty$ is an isomorphism, that $\Spec R'_\infty$ is irreducible, that $\Spec R_\infty^{inv}$ is irreducible of dimension $q+4|S\cup S_\infty|$, and that $\widehat{\bG}_m^\gamma[2](\cO)$ acts transitively on the set of irreducible components of $\Spec R_\infty$, all of which have dimension $q+4|S\cup S_\infty|$. Property (\ref{property_actionlifts}) of Proposition \ref{prop_construction_of_patched_data_p_equals_2} implies that $\Supp_{R_\infty} H_{D,\infty}$ is invariant under the action of $\widehat{\bG}_m^\gamma[2](\cO)$, so we conclude that $H_{D,\infty}$ has full support in $\Spec(R_\infty)$ (in fact, considering $H_{D,\infty}[\frac{1}{2}]$ over $R_\infty[\frac{1}{2}]$ we can conclude that $H_{D,\infty}$ is a faithful $R_\infty$-module).

Let $\p' \subset P$ denote the kernel of the morphism $P \to \cO$ associated to $t'$, and let $\p'_\infty$ denote its pullback to $P_\infty$. Then $\p'_\infty \in \Supp_{P_\infty} H_{G,\infty}$, by hypothesis. Similarly, let $\mathfrak{r}, \mathfrak{r}' \subset R$ denote the kernels of the morphisms $R \to \cO$ associated to $r, r'$ respectively, and let $\mathfrak{r}_\infty, \mathfrak{r}'_\infty \subset R_\infty$ denote their pullbacks under the morphism $R_\infty \to R$. Then $\p_\infty$ (resp. $\p'_\infty$) is the image of $\mathfrak{r}_\infty$ (resp $\mathfrak{r}'_\infty$) under the map $\Spec R_\infty \to \Spec P_\infty$. Let $\mathfrak{r}''_\infty \in \Spec R_\infty$ denote a point which is in the $\widehat{\bG}_m^\gamma[2](\cO)$-orbit of $\mathfrak{r}'_\infty$ and on the same irreducible component of $\Spec R_\infty$ as $\mathfrak{r}_\infty$, and let $\p''_\infty$ denote the image of $\mathfrak{r}''_\infty$ in $\Spec P_\infty$. Then $\p''_\infty$, $\p_\infty$ lie on a common irreducible component of $\Spec P_\infty$. Since the action of $\widehat{\bG}_m^\gamma[2](\cO)$ extends to $P_\infty$ and $H_{G,\infty}$ and the morphism $P_\infty \to R_\infty$ is equivariant for this action, $\Supp_{P_\infty} H_{G,\infty}$ is invariant under $\widehat{\bG}_m^\gamma[2](\cO)$ and contains $\p''_\infty$. 

We now observe that the Zariski tangent space of the local ring $P_{\infty, (\p''_\infty)}$ has dimension at most $q + 4|S \cup S_\infty| - 1$. Indeed, translating by the element of $\widehat{\bG}_m^\gamma[2](\cO)$ which takes $\p''_\infty$ to $\p'_\infty$, it suffices to show that the Zariski tangent space of the local ring $P_{\infty, (\p'_\infty)}$ has dimension at most $q +  4 |S \cup S_\infty| - 1$, or  even that $P_{\infty, (\p'_\infty)} / (Y_1,  \dots, Y_q, Z_1, \dots,  Z_{4 |S \cup S_\infty| - 1})  = P_{(\p')}$ is a field. This again follows from \cite[Theorem A, Proposition 2.21, Example 2.34]{New19a}. It follows that $P_{\infty, (\p''_\infty)}$ is a regular local ring of dimension $q +  4 |S \cup S_\infty| - 1$ and that there is a unique irreducible component of $P_\infty$ containing the point $\p''_\infty$, which has dimension $q + 4 |S \cup S_\infty|$ and is contained in $\Supp_{P_\infty} H_{G,\infty}$. We deduce that $\p_\infty \in \Supp_{P_\infty} H_\infty$, as required.

The proof of Proposition \ref{prop_construction_of_patched_data_p_equals_2} is again based on a patching argument. Here is the analogue   of Lemma \ref{lem_existence_of_TW_data_p_neq_2} in our case.
\begin{lemma}
Let $q = \dim_k H^1(F_S / F, \ad \overline{r}) - 2$, and let $g = 2q +1$. Then $q \geq |S \cup S_\infty| - 2$ and for any $N \geq 1$, we can find a Taylor--Wiles  datum $(Q, \widetilde{Q}, (\alpha_v, \beta_v)_{v \in Q}))$ of level $N \geq 1$ with the following properties:
\begin{enumerate}
\item $|Q| = q$.
\item There is a surjection $R_{loc} \llbracket X_1, \dots, X_g \rrbracket \to R_Q^{\prime, \square}$ of $R_{loc}$-algebras.
\item Let $\Theta_Q$ denote the Galois group of the maximal abelian pro-2 extension of $F$ which is unramified outside $Q$ and $(S \cup S_\infty)$-split. Then there is an isomorphism $\Theta_Q  / (2^N) \cong (\bZ / 2^N \bZ)^\gamma$, where $\gamma = 2 - |S \cup S_\infty| + q$.
\end{enumerate}
\end{lemma}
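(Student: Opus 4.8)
The plan is to carry out the Taylor--Wiles prime construction as in the proof of Lemma~\ref{lem_existence_of_TW_data_p_neq_2}, with two $p=2$ modifications: one works throughout with the full adjoint module $\ad\overline{r} = M_2(k)$ in place of $\ad^0\overline{r}$ (because $\Def'$ leaves the global determinant free) and one keeps simultaneous track of the determinant, i.e.\ of the trivial subquotients of $\ad\overline{r}$; these trivial subquotients produce $\gamma$ and, ultimately, the $\widehat{\bG}_m^\gamma$-action of Proposition~\ref{prop_construction_of_patched_data_p_equals_2}. The first step is the inequality $q \ge |S \cup S_\infty| - 2$. When $p=2$ the mod-$\varpi$ cyclotomic character is trivial and the trace form on $M_2(k)$ is a perfect $G_F$-invariant pairing, so $(\ad\overline{r})^{*}(1) \cong \ad\overline{r}$; moreover the $G_F$-module $\ad\overline{r}/(k\cdot\Id)$ has no invariants, since an invariant line lifts to $X \in M_2(k)$ with $gXg^{-1} - X \in k\cdot\Id$ for all $g$, and restricting to the perfect group $\PSL_2(\F_{p^a}) \subseteq \operatorname{Proj}\overline{r}(G_F)$ (recall $p^a > 5$) kills the resulting homomorphism to $(k,+)$, forcing $X \in M_2(k)^{\PSL_2(\F_{p^a})} = k\cdot\Id$. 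Combined with $H^0(F_S/F, \ad\overline{r}) = k$, the exact sequence $0 \to k\cdot\Id \to \ad\overline{r} \to \ad\overline{r}/(k\cdot\Id) \to 0$ yields an injection $H^1(F_S/F, \F_2) \hookrightarrow H^1(F_S/F, \ad\overline{r})$. Finally the global Euler characteristic formula (whose archimedean term vanishes for the trivial module) gives $\dim_k H^1(F_S/F, \F_2) = 1 + \dim_k H^2(F_S/F, \F_2)$, and since $\mu_2 \subset F$ one has $\dim_k H^2(G_{F_v}, \F_2) = 1$ for each $v \in S \cup S_\infty$, so Poitou--Tate gives $\dim_k H^2(F_S/F, \F_2) \ge |S\cup S_\infty| - 1$. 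Hence $\dim_k H^1(F_S/F, \ad\overline{r}) \ge |S\cup S_\infty|$, i.e.\ $q \ge |S\cup S_\infty|-2$ and $\gamma \ge 0$.

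Next I would construct $Q$. Arguing exactly as in Lemma~\ref{lem_existence_of_TW_data_p_neq_2}, but with the module $\ad\overline{r}$ replacing $\ad^0\overline{r}(1)$, one finds for each $N \ge 1$ a Taylor--Wiles datum with $|Q| = q$ killing the dual Selmer group of $\ad\overline{r}$ attached to the local conditions of $\Def'$. By induction and Chebotarev this reduces to moving a single nonzero class by an element $\sigma \in G_{F(\zeta_{2^N})}$ with $\overline{r}(\sigma)$ regular semisimple of eigenvalues $\alpha, \beta$ satisfying $(\alpha/\beta)^i \ne 1$ for $1 \le i \le n-1$; such $\sigma$ exists because $\operatorname{Proj}\overline{r}(G_{F(\zeta_{2^N})}) = \PSL_2(\F_{p^a})$ contains a conjugate of $\diag(t^2,1)$ for a generator $t$ of $\F_{p^a}^\times$ (of odd order $p^a - 1 > 2n - 2$), and one uses $H^1(\PSL_2(\F_{p^a}), \ad^0\overline{r}) = 0$ (\cite[Table (4.5)]{Cli75}; together with perfectness this gives $H^1(\PSL_2(\F_{p^a}), \ad\overline{r}) = 0$ as well). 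For such $Q$, a presentation estimate as in \cite[Proposition 3.2.5]{Kis09} --- modified because the global determinant is not fixed --- gives a surjection $R_{loc}\llbracket X_1, \dots, X_g \rrbracket \twoheadrightarrow R_Q^{\prime,\square}$ of $R_{loc}$-algebras with $g = 2|Q| + 1 = 2q+1$: each Taylor--Wiles prime now carries a $2$-dimensional local deformation (the two eigenvalue characters $A_v, B_v$ vary independently), so the naive odd-$p$ count $|Q| + |S\cup S_\infty| - 1$ is augmented by exactly $\gamma$. This gives (1) and (2).

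For (3), note that $\Hom(\Theta_Q, \bZ/2^N)$ is the dual Selmer group of $\bZ/2^N$ with local conditions ``split at $S \cup S_\infty$, unrestricted at $Q$'', so by global duality it is dual to the Kummer--Selmer group of classes in $\cO_{F,S\cup S_\infty}^\times \otimes \bZ/2^N$ that become $2^N$-th powers at every $v \in Q$; since $q_v \equiv 1 \bmod 2^N$, imposing this condition at $v$ kills the rank of the composite $\cO_{F,S\cup S_\infty}^\times \otimes \bZ/2^N \to \cO_{F_v}^\times \otimes \bZ/2^N \cong \mu_{2^N}(F_v)$. A further application of Chebotarev to the Kummer extension $F(\zeta_{2^N}, \sqrt[2^N]{\cO_{F,S\cup S_\infty}^\times})/F$ --- linearly disjoint, by the big-image hypothesis, from the $\PSL_2(\F_{p^a})$- and $\ad^0\overline{r}(1)$-extensions used above --- lets me arrange that, of the $q$ primes already chosen, exactly $|S\cup S_\infty| - 2$ detect a rank-$(|S\cup S_\infty|-2)$ subgroup of the $(S\cup S_\infty)$-units and the remaining $\gamma$ detect nothing there, without disturbing the earlier conditions. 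Greenberg--Wiles then gives $\Hom(\Theta_Q, \bZ/2^N)$ the length $N\gamma$, and since $\Theta_Q$ is abelian pro-$2$ this forces $\Theta_Q/(2^N) \cong (\bZ/2^N)^\gamma$.

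The hard part is exactly this coordination. Because $\ad\overline{r}$ at $p=2$ has $k\cdot\Id$ as a submodule and the trace as a quotient, the conditions killing its dual Selmer group and those controlling $\Theta_Q$ are entangled, so the primes of $Q$ must be produced by a single Chebotarev argument simultaneously realising both --- as well as $q_v \equiv 1 \bmod 2^N$ and the strengthened eigenvalue condition on $\overline{r}(\Frob_v)$. What makes this possible is the sandwiched-image hypothesis, forcing $\operatorname{Proj}\overline{r}(G_F) = \PSL_2(\F_{p^a})$ with $p^a > \max(5, 2n-1)$: a perfect group with $H^1(\PSL_2(\F_{p^a}), \ad^0\overline{r}) = 0$ and, in this range, trivial Schur multiplier, which at once removes the extraneous invariants needed in step one and guarantees the linear disjointness of all the auxiliary Galois extensions. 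This is the $2$-adic bookkeeping of \cite{Kha09a, MR2551765} adapted to the present situation.
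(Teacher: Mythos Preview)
The paper's own proof is a one-line citation of \cite[Lemma 5.10]{Kha09a}, noting only that the stronger eigenvalue condition (that $\alpha_v^{n-i}\beta_v^{i-1}$ be pairwise distinct for $i=1,\dots,n$, rather than just $\alpha_v\neq\beta_v$) can be imposed by the same Chebotarev argument used in Lemma~\ref{lem_existence_of_TW_data_p_neq_2}, thanks to the large-image hypothesis. You are attempting to unpack that reference, and your sketch has the right architecture: replace $\ad^0\overline{r}$ by $\ad\overline{r}$ because the global determinant is free in $\Def'$, track the scalar sub and trace quotient to produce $\gamma$, and coordinate the Chebotarev conditions for (2) and (3) simultaneously. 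Your proof of the inequality $q\ge|S\cup S_\infty|-2$ is correct.

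There are, however, genuine gaps in your treatment of (3). First, knowing that $\Hom(\Theta_Q,\bZ/2^N)$ has length $N\gamma$ does \emph{not} force $\Theta_Q/(2^N)\cong(\bZ/2^N)^\gamma$: the group $(\bZ/2\bZ)^{N\gamma}$ has the same order. One must also control the $2$-rank, i.e.\ arrange that $\Hom(\Theta_Q,\bZ/2)$ has dimension exactly $\gamma$; in \cite{Kha09a} this comes from choosing the primes so that both the mod-$2$ and mod-$2^N$ Selmer groups have the right size. Second, your $(S\cup S_\infty)$-unit rank count is off: the Dirichlet rank of $\cO_{F,S\cup S_\infty}^\times$ is $|S\cup S_\infty|-1$, not $|S\cup S_\infty|-2$, and there is an additional $\bZ/2$ from $\{\pm1\}$ that enters the Greenberg--Wiles formula. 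Third, the simultaneous Chebotarev argument --- choosing each prime to move a class in the dual Selmer group of $\ad\overline{r}$, satisfy the strengthened eigenvalue condition, have $q_v\equiv 1\bmod 2^N$, \emph{and} hit the right target in the Kummer extension of $(S\cup S_\infty)$-units --- is exactly the delicate part, and you have deferred it to ``the $2$-adic bookkeeping of \cite{Kha09a, MR2551765}''. Since that is precisely what the paper does from the outset, your proposal ultimately lands in the same place as the paper's proof; the intermediate expansion is a reasonable outline but not yet a self-contained argument.
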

\begin{proof}
This is contained in \cite[Lemma 5.10]{Kha09a}, except that result specifies only that if $v \in Q$ then the eigenvalues $\alpha_v, \beta_v \in k$ of $\overline{r}(\Frob_v)$ are distinct. Here we require that the numbers $\alpha_v^{n-i} \beta_v^{i-1}$ ($i = 1, \dots, n$) are distinct. However, reading the proof of \emph{loc. cit.} we see that we can indeed choose $v$ so that $\overline{r}(\Frob_v)$ satisfies this stronger requirement (using of course our assumption that the projective image of $\overline{r}$ contains $\PSL_2(\bF_{2^a})$ for some $a \geq 1$ such that $2^a > \max(5, 2n-1)$, as we did in the proof of Lemma \ref{lem_existence_of_TW_data_p_neq_2}).
\end{proof}
Now we give the proof of Proposition \ref{prop_construction_of_patched_data_p_equals_2}. Let $q = \dim_k H^1(F_S  / F, \ad \overline{r})-2$ and $g = 2q+1$. For each $N \geq 1$, fix a Taylor--Wiles datum $(Q_N, \widetilde{Q}_N, (\alpha_v, \beta_v)_{v \in Q_N})$ of level $N + \max(m_D, m_G)$ such that there exists a surjection $R_{loc}\llbracket X_1, \dots, X_g \rrbracket \to R_{Q_N}^{\prime,\square}$ of $R_{loc}$-algebras. Define $\cT = \cO \llbracket Z_1, \dots, Z_{4 (|S| + |S_\infty|) - 1} \rrbracket$. Choose for each $N \geq 1$ a surjection $\cO \llbracket Y_1, \dots, Y_q \rrbracket \to \cO[\Delta_{Q_N}]$. Then we get a surjection $W_\infty  \to \cT \widehat{\otimes}_\cO \cO[\Delta_{Q_N}]$.

Define $R_N = R^\square_{Q_N}$, $R'_N = R^{\prime,\square}_{Q_N}$, $H_{D,N} = \cT \widehat{\otimes}_\cO H_D(U_1(Q; N))_{\ffrm_{D,Q_N}}$, $P_N = \cT \widehat{\otimes}_\cO P_{Q_N}$, $H_{G,N} = \cT \widehat{\otimes}_\cO H_G(V_1(Q; N))_{\ffrm_{G,Q_N}}$.  We fix a representative $r^{univ} : G_F \to \GL_2(R)$ for the universal deformation over $R$, and representatives $r_{Q_N}^{univ} : G_F \to \GL_2(R_{Q_N})$ for the universal deformations over $R_{Q_N}$ lifting $r^{univ}$ for each $N \geq 1$. These choices determine compatible isomorphisms $R^\square \cong \cT \widehat{\otimes}_\cO R$ and  $R_{N} \cong \cT \widehat{\otimes}_\cO R_{Q_N}$. Thus each ring $R_N, P_N$ has a $W_\infty$-algebra structure, and there are isomorphisms $R_N \otimes_{W_\infty} \cO \cong R$, $P_N \otimes_{W_\infty} \cO \cong P$. Moreover, Proposition \ref{prop_automorphic_forms_on_D_at_level_Q} and Proposition \ref{prop_automorphic_forms_on_G_at_level_Q} show that the modules $H_{G,N}, H_{D,N}$ are finite free as $\cT \widehat{\otimes}_\cO \cO[\Delta_{Q_N} / (p^N)]$-modules. Completed tensor product with $\cT$ promotes the morphism $P_{Q_N} \to R_{Q_N}$ to a morphism $P_N \to R_N$.

Let $\check{\Theta}_{Q_N} = \Sp_{\cO[\Theta_{Q_N}]}$ denote the group functor $A \mapsto \Hom(\Theta_{Q_N}, A^\times)$. Then $\check{\Theta}_{Q_N}$ acts on $\Sp_{R'_N}$ by twisting: if $r$ is a lifting corresponding to a morphism $R'_N \to A$ and $\chi : \Theta_{Q_N} \to A^\times$ is a character, then $r \otimes \chi$ is a lifting which determines another morphism $R'_N \to A$. There is a morphism $\delta_N : \Sp_{R'_N} \to \check{\Theta}_{Q_N}$ given by taking the determinant and multiplying by $\epsilon$, and $\delta_N^{-1}(1) = \Sp_{R_N}$. The induced action of $\check{\Theta}_{Q_N}[2](\cO)$ on $R_N$ lifts to an action on $H_D(U_1(Q; N))_{\ffrm_{D,Q_N}}$, given by twisting by quadratic characters as above (see also \cite[\S 7.5]{Kha09a}).  Similarly we can define compatible actions of $\check{\Theta}_{Q_N}[2]$ on $P_N$ and of $\check{\Theta}_{Q_N}[2](\cO)$ on $H_G(V_1(Q; N))_{\ffrm_{G,Q_N}}$, which are trivial if $n-1$ is even and which correspond to twisting by the quadratic characters $\chi|_{G_K}$ (resp. $\theta_\chi$ for $\chi \in \check{\Theta}_{Q_N}[2](\cO)$)  when $n-1$ is odd. We extend these to actions on $H_{D, N}$ and $H_{G,N}$ by completed tensor product with $\cT$. The morphism $P_N \to R_N$ is equivariant for these actions. A very similar argument to the proof of \cite[Proposition 9.3]{Kha09a} (with modifications as in the proof of Proposition \ref{prop_construction_of_patched_data_p_odd} above) now shows how to use the above data to construct the objects required by the statement of Proposition \ref{prop_construction_of_patched_data_p_equals_2}. 
\end{proof}
\section{Killing ramification}\label{sec_killing_ramification}

Our goal in this section is to prove the following theorem (Theorem \ref{introthm_sympowers} of the introduction):
\begin{theorem}\label{thm_sym_powers}
Let $n \geq 1$. Let $\pi$ be a regular algebraic, cuspidal automorphic representation of  $\GL_2(\bA_\bQ)$ which is non-CM. Then $\Sym^{n-1} \pi$ exists.
\end{theorem}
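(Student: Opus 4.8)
The plan is to deduce Theorem~\ref{thm_sym_powers} from Theorem~\ref{thm_automorphy_of_symmetric_power} by induction on the cardinality of the set $sc(\pi)$ of primes $p$ at which $\pi_p$ is supercuspidal, imitating the ``killing ramification'' strategy of \cite{Kha09}. First I would dispose of some cases by other means: if $\pi$ is CM or a holomorphic limit of discrete series at $\infty$ (i.e.\ of weight one), the automorphy of $\Sym^{n-1}\pi$ is classical (see Appendix~\ref{wt1}), so we may assume $\pi$ is non-CM and regular algebraic of weight $k \geq 2$. Next I would reduce to the case $k = 2$: since $\pi$ is non-CM, for all but finitely many primes $\ell$ the representation $\overline{r}_{\pi, \iota}$ (for $\iota : \overline{\bQ}_\ell \to \bC$) has large non-dihedral image, so fixing such an $\ell > n$ we may invoke the weight part of Serre's conjecture together with level-raising to produce a weight-$2$, non-CM $\pi_0$ with $\overline{r}_{\pi_0, \iota} \cong \overline{r}_{\pi, \iota}$ and $sc(\pi_0) = sc(\pi)$ (the local behaviour at $\ell$ changes, but $\ell \notin sc(\pi_0) \cup sc(\pi)$). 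As $\ell > n$, the representation $\Sym^{n-1}\overline{r}_{\pi, \iota}$ is absolutely irreducible with adequate image and $\Sym^{n-1}r_{\pi,\iota}|_{G_{\bQ_\ell}}$ is potentially diagonalizable, so if $\Sym^{n-1}\pi_0$ is automorphic the automorphy lifting theorems of \cite{BLGGT} upgrade the automorphy of $\Sym^{n-1}\overline{r}_{\pi, \iota}$ to that of $\Sym^{n-1}r_{\pi, \iota}$, giving the existence of $\Sym^{n-1}\pi$. We are thus reduced to $\pi$ of weight $2$, and we now induct on $|sc(\pi)|$; the base case $sc(\pi) = \emptyset$ is exactly the main result of \cite{New19b}.

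For the inductive step, suppose $sc(\pi) \neq \emptyset$ and that $\Sym^{n-1}\pi''$ is automorphic for every weight-$2$, non-CM $\pi''$ with $|sc(\pi'')| < |sc(\pi)|$. Fix $p \in sc(\pi)$. The aim is to exhibit a weight-$2$, non-CM automorphic representation $\pi'$ with $sc(\pi') = sc(\pi) \setminus \{p\}$, together with a congruence $\overline{r}_{\pi, \iota_q} \cong \overline{r}_{\pi', \iota_q}$ for a suitable prime $q$ and isomorphism $\iota_q : \overline{\bQ}_q \to \bC$, such that, after a solvable base change to a totally real field $F$, the pair $(\pi, \pi')$ satisfies the hypotheses of Theorem~\ref{thm_automorphy_of_symmetric_power} (with ``$p$'' there taken to be our $q$). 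Since $\Sym^{n-1}\pi'$ is then automorphic by the inductive hypothesis, that theorem yields the automorphy of $\Sym^{n-1}r_{\pi, \iota_q}|_{G_F}$, and hence, by solvable descent, the existence of $\Sym^{n-1}\pi$.

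The construction of $q$ and $\pi'$ is where the ``seasoning'' of Section~\ref{sec_killing_ramification} (Definition~\ref{def_seasoned}) is needed. The hypotheses of Section~\ref{sec_ALT_for_sym_powers} require the residual representation at the congruence prime to have image caught in a sandwich $\PSL_2(\bF_{q^a}) \subset \operatorname{Proj}\overline{r}_{\pi,\iota_q}(G_F) \subset \PGL_2(\bF_{q^a})$ with $q^a > \max(5, 2n-1)$; the lower bound $2n-1$ is forced by the sharpened Taylor--Wiles condition used throughout Section~\ref{sec_ALT_for_sym_powers}, that $\alpha_v^{n-1}, \alpha_v^{n-2}\beta_v, \dots, \beta_v^{n-1}$ be distinct. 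One therefore first replaces $\pi$ by a congruent representation equipped with a \emph{good dihedral prime} $q$, which by construction forces $\overline{r}_{\pi, \iota_q}$ to have image of this kind; imposing the good dihedral structure would in general enlarge $sc(\pi)$, and to avoid this one simultaneously introduces \emph{Steinberg} ramification at an auxiliary prime $r$ (a Steinberg representation not being supercuspidal, this leaves $sc$ unchanged). One must check that this seasoning can be performed in weight $2$, compatibly with the non-ordinarity conditions at $p$, and without obstructing the reduction of the automorphy of $\Sym^{n-1}\pi$ to that of the seasoned representation. With a seasoned $\pi$ in hand, one kills the prime $p$ by the level-lowering argument of \cite{Kha09}: since $r_{\pi,\iota_q}|_{G_{F_v}}$ for $v \mid p$ is potentially Barsotti--Tate, the corresponding local deformation ring has a non-ordinary crystalline point of Hodge--Tate weights $\{0,1\}$ which is unramified at $p$, and the modularity lifting theorems of \cite{Kis09, MR2551765} then produce a weight-$2$, non-CM $\pi'$ which is congruent to $\pi$ modulo $q$, is unramified (but still non-ordinary) at $p$, and agrees with $\pi$ at all other ramified places; in particular $sc(\pi') = sc(\pi) \setminus \{p\}$, and $\pi'$ may again be taken seasoned. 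A final solvable base change arranges conditions (6)--(11) of Section~\ref{sec_ALT_for_sym_powers}, after which Theorem~\ref{thm_automorphy_of_symmetric_power} applies.

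I expect the main obstacle to be the seasoning step: arranging the good dihedral prime so that $\overline{r}_{\pi,\iota_q}$ acquires precisely the large image required by Section~\ref{sec_ALT_for_sym_powers}, while keeping $sc(\pi)$ from growing (the purpose of the auxiliary Steinberg prime $r$) and while ensuring that passage to the seasoned representation is reversible at the level of $\Sym^{n-1}$-automorphy. Once the bookkeeping of seasoned representations is in place, the substantive inputs are confined to \cite{New19b} (the base case), Theorem~\ref{thm_automorphy_of_symmetric_power} (the inductive step), and the classical modularity and level-changing machinery used to move between congruent forms.
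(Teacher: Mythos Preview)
Your overall strategy is right, but there is a genuine confusion in the inductive step about which prime plays the role of the congruence prime in Theorem~\ref{thm_automorphy_of_symmetric_power}. In the paper (Proposition~\ref{prop_seasoned_reduce_the_level}), to remove a prime $p$ from $sc(\pi)$ one works with a congruence \emph{modulo $p$ itself}: one chooses $\iota : \overline{\bQ}_p \to \bC$ and constructs $\pi'$ with $\overline{r}_{\pi', \iota} \cong \overline{r}_{\pi, \iota}$, with $\pi'_p$ a ramified principal series (so $p \notin sc(\pi')$) and $r_{\pi', \iota}|_{G_{\bQ_p}}$ potentially Barsotti--Tate and non-ordinary (this is Lemma~\ref{lem_replacement_for_breuil_mezard}). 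The purpose of the good dihedral prime $q$ and the auxiliary Steinberg primes is \emph{not} to serve as the congruence prime, but to guarantee that $\operatorname{Proj}\overline{r}_{\pi, \iota}(G_\bQ)$ contains $\PSL_2(\bF_{p^a})$ with $p^a > 2n-1$ for \emph{every} $p \in sc(\pi)$ (Proposition~\ref{prop_seasoned_large_image}). Your proposal instead takes the congruence prime to be $q$ and then asserts that ``$r_{\pi,\iota_q}|_{G_{F_v}}$ for $v \mid p$ is potentially Barsotti--Tate'': this is meaningless, since potential Barsotti--Tate is a notion for $p$-adic representations of $p$-adic Galois groups, whereas here the coefficients are $q$-adic and $p \neq q$. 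More seriously, killing a supercuspidal at $p$ via a congruence modulo a \emph{different} prime $q$ would require $\overline{r}_{\pi,\iota_q}|_{I_{\bQ_p}}$ to admit a non-supercuspidal lift, which fails in general since the inertial type at $p$ is rigid under mod-$q$ reduction.

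There is a second, structural gap. You write that the Steinberg prime $r$ is introduced ``to avoid'' the growth of $sc(\pi)$ when the good dihedral prime $q$ is imposed. That is not its role: $\pi_q$ is supercuspidal by construction, so seasoning \emph{does} enlarge $sc(\pi)$ by one (Proposition~\ref{prop_congruence_to_seasoned} gives $sc(\pi') = sc(\pi) \cup \{q\}$). The Steinberg primes $r$ and $s_{i,j}$ are there to rule out reducibility and dihedrality of the various mod-$p$ residual representations, including the delicate case $p = q$. Consequently a straight induction on $|sc(\pi)|$ does not close: after seasoning you are one prime worse off. The paper handles this with the zigzag of hypotheses $(H_k)$ and $(H'_k)$ in the proof of Proposition~\ref{prop_sym_powers_weight_2_2_3}: from $(H_k)$ one obtains $(H'_{k+1})$ by removing any prime from a seasoned $\pi$; from $(H'_k)$ one obtains $(H_k)$ by first seasoning (so $|sc|$ jumps to $k+1$) and then removing a prime $p \neq q$ to land back at $|sc| = k$ while \emph{remaining seasoned}. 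The cases $2 \in sc(\pi)$ or $3 \in sc(\pi)$ are then treated separately at the end.
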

We fix $n$, which we can assume to be $\geq 3$. The proof of Theorem \ref{thm_sym_powers} will be roughly by induction on the 
cardinality of $sc(\pi)$, the set of primes $p$ such that $\pi_p$ is 
supercuspidal; the case where $sc(\pi)$ is empty is exactly the main result of 
\cite{New19b}.

We begin with some preparatory definitions and results.
\begin{defn}
Let $\pi$ be a regular algebraic, cuspidal automorphic representation of $\GL_2(\bA_\bQ)$. We define the semisimple conductor $M_\pi$ of $\pi$ to be $M_\pi = \prod_l N((\rec_{\bQ_l} \pi_l)^{ss})$ (where $N$ denotes conductor).
\end{defn}
\begin{lemma}\label{lem_semisimple_conductor}
Let $\pi$ be a regular algebraic, cuspidal automorphic representation of $\GL_2(\bA_\bQ)$, let $p$ be an odd prime, and let $\iota : \overline{\bQ}_p \to \bC$ be an isomorphism. If $\overline{r}_{\pi, \iota}$ is reducible or dihedral\footnote{By dihedral, we mean that the representation is induced from an index two subgroup of $G_{\Q}$.}, then the prime-to-$p$ part of its conductor divides $M_\pi$.
\end{lemma}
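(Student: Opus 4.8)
The plan is to work one prime at a time: it suffices to show that for every prime $l \neq p$, the exponent of $l$ in the conductor of $\overline r_{\pi,\iota}$ --- that is, the Artin conductor exponent $a\big(\overline r_{\pi,\iota}|_{G_{\Ql}}\big)$ --- is at most $v_l(M_\pi)$. First I would unwind $v_l(M_\pi)$ using the local-global compatibility recalled in the introduction: since the Artin conductor of a Weil--Deligne representation is unchanged by Frobenius-semisimplification and by unramified twists, $v_l(M_\pi)$ equals the Artin conductor exponent $a(\rho_0)$ of a semisimple representation $\rho_0 \colon G_{\Ql} \to \GL_2(\overline{\bQ}_p)$ whose associated Weil--Deligne representation is obtained from $\WD\big(r_{\pi,\iota}|_{G_{\Ql}}\big)$ by setting the monodromy operator to $0$. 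I would record the elementary point that $\rho_0$ then has the same Jordan--H\"older factors, as a $G_{\Ql}$-representation, as $r_{\pi,\iota}|_{G_{\Ql}}$ itself: deleting the monodromy operator changes neither the underlying Weil group representation nor its semisimplification, and for $l \neq p$ the composition factors of a $p$-adic representation of $G_{\Ql}$ depend only on that semisimplification.

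Next I would isolate the key local fact: for $l \neq p$ and any continuous $\sigma \colon G_{\Ql} \to \GL_m(\overline{\bQ}_p)$, the semisimple reduction $\overline\sigma$ satisfies $a(\overline\sigma) \leq a(\sigma)$. Here I would split $a(\cdot) = \operatorname{Sw}(\cdot) + \big(\dim(\cdot) - \dim(\cdot)^{I_{\Ql}}\big)$ and treat the two terms separately. The Swan conductor is insensitive both to reduction modulo $\varpi$ --- the image of wild inertia under $\sigma$ is a finite group of order prime to $p$, being a continuous image of the pro-$l$ group of wild inertia inside $\GL_m$ of a $p$-adic ring, and it therefore injects under reduction --- and to semisimplification, because the higher ramification subgroups are pro-$l$, so $\operatorname{Sw}$ is additive in short exact sequences of representations over $\overline{\bF}_p$. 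On the other hand, the dimension of the space of inertia invariants can only increase under reduction modulo $\varpi$ (apply the snake lemma to multiplication by $\varpi$ on a $G_{\Ql}$-stable lattice) and under semisimplification (a trivial subrepresentation contributes that many trivial composition factors). Combining the two bounds gives $a(\overline\sigma) \leq a(\sigma)$.

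Then I would invoke the hypothesis, which is exactly what makes the statement true. Because $\overline r_{\pi,\iota}$ is reducible or dihedral, $\overline r_{\pi,\iota}|_{G_{\Ql}}$ is semisimple: in the reducible case it is a sum of two characters, while in the dihedral case $\overline r_{\pi,\iota} \cong \Ind_{G_K}^{G_{\bQ}} \overline\theta$ for a quadratic field $K/\bQ$, so by Mackey's formula $\overline r_{\pi,\iota}|_{G_{\Ql}}$ is a sum of representations induced from characters along local extensions of degree at most $2$, which are semisimple since $p$ is odd. Now $\overline r_{\pi,\iota}|_{G_{\Ql}}$ has the same composition factors as the reduction modulo $\varpi$ of $r_{\pi,\iota}|_{G_{\Ql}}$, hence (by the first paragraph) the same composition factors as the semisimple reduction $\overline{\rho_0}$ of $\rho_0$. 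Two semisimple representations with the same composition factors are isomorphic, so $\overline r_{\pi,\iota}|_{G_{\Ql}} \cong \overline{\rho_0}$, and the key local fact gives $a\big(\overline r_{\pi,\iota}|_{G_{\Ql}}\big) = a(\overline{\rho_0}) \leq a(\rho_0) = v_l(M_\pi)$, as desired.

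I expect the only real difficulty to be the bookkeeping with semisimplifications rather than any computation, and this is also where the hypothesis is genuinely needed. One cannot simply bound $a\big(\overline r_{\pi,\iota}|_{G_{\Ql}}\big)$ by $a\big(r_{\pi,\iota}|_{G_{\Ql}}\big)$, because the latter can be strictly larger than $v_l(M_\pi) = a(\rho_0)$ --- precisely when $\pi_l$ is a twist of the Steinberg representation, so that the Weil--Deligne representation has nonzero monodromy --- and one must instead know that $\overline r_{\pi,\iota}|_{G_{\Ql}}$ is the \emph{semisimplification} of the reduction of $r_{\pi,\iota}|_{G_{\Ql}}$. For example, if $\pi_l$ is Steinberg the honest (non-semisimplified) reduction of $r_{\pi,\iota}|_{G_{\Ql}}$ can have conductor exponent $1$ while $v_l(M_\pi) = 0$, so the semisimplicity of $\overline r_{\pi,\iota}|_{G_{\Ql}}$ provided by the hypothesis is essential. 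The deformation-theoretic input of the earlier sections plays no role here.
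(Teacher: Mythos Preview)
Your proof is correct and follows the same route as the paper: establish that $\overline r_{\pi,\iota}|_{G_{\bQ_l}}$ is semisimple for each $l\neq p$, then use that the Artin conductor of a semisimple mod-$p$ reduction is bounded by that of the characteristic-zero semisimplification. The only difference is in how local semisimplicity is obtained: the paper makes the single global observation that, under either hypothesis, the image of $\overline r_{\pi,\iota}$ has order prime to $p$ (it sits inside a diagonal torus in the reducible case, and is an extension of $\bZ/2\bZ$ by a finite subgroup of $\overline{\bF}_p^\times$ in the dihedral case, with $p$ odd), so Maschke's theorem gives semisimplicity of every restriction at once. Your case-by-case argument via Mackey's formula in the dihedral case reaches the same conclusion, and your explicit verification that $a(\overline\sigma)\le a(\sigma)$ spells out what the paper leaves implicit.
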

\begin{proof}
If $\overline{r}_{\pi, \iota}$ is reducible or dihedral then its image has order prime to $p$,  and for any prime $l \neq p$, $\overline{r}_{\pi, \iota}|_{G_{\bQ_l}}$ is semisimple.
This shows that the conductor of $\overline{r}_{\pi, \iota}|_{G_{\bQ_l}}$ divides the conductor of $(\rec_{\bQ_l} \pi_l)^{ss}$.
\end{proof}
\begin{lemma}\label{lem_replacement_for_breuil_mezard_weak}
Let $\pi$ be a regular algebraic, cuspidal automorphic representation of $\GL_2(\bA_\bQ)$. Let $p$ be a prime, let $\iota : \overline{\bQ}_p \to \bC$ be an isomorphism, and suppose that $\operatorname{Proj} \overline{r}_{\pi, \iota}(G_\bQ)$ contains a conjugate of $\PSL_2(\bF_{p^a})$ for some $p^a > 5$. Then we can find another regular algebraic, cuspidal automorphic representation $\pi'$ of $\GL_2(\bA_\bQ)$ with the following properties:
\begin{enumerate}
\item There is an isomorphism $\overline{r}_{\pi, \iota} \cong \overline{r}_{\pi', \iota}$.
\item $\pi'$ has weight 2 and is not $\iota$-ordinary. 
\item There is an isomorphism $\rec_{\bQ_p} \pi'_p \cong \omega_1 \oplus \omega_2$, where $\omega_1, \omega_2 : W_{\bQ_p} \to \bC^\times$ are characters of conductor dividing $p^3$. 
\item For each prime $l \neq p$, $\pi_l$ is a twist of the Steinberg representation (resp. supercuspidal) if and only if $\pi'_l$ is. 
\end{enumerate}
\end{lemma}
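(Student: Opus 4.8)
The plan is to realize $\pi'$ as a non-ordinary automorphic lift of $\overline{r}_{\pi,\iota}$ with prescribed local behaviour, produced by a Taylor--Wiles--Kisin patching argument in the rank-$2$ setting (following Section \ref{sec_ALT_for_sym_powers}, with the $p = 2$ modifications of \cite{Kha09a, MR2551765}); the genuinely new input is a purely local statement at $p$ which substitutes for the Breuil--M\'ezard conjecture.

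First I would fix the local deformation conditions. At $p$, choose an inertial type $\tau = \psi_1 \oplus \psi_2$, a sum of two characters $\psi_1, \psi_2 : I_{\bQ_p} \to \overline{\bQ}_p^\times$ each of conductor dividing $p^3$, such that the fixed-determinant, Hodge--Tate weights $\{0,1\}$, potentially Barsotti--Tate deformation ring $R_p^\tau$ of $\overline{r}_{\pi,\iota}|_{G_{\bQ_p}}$ of type $\tau$ is non-zero and has a non-ordinary $\overline{\bQ}_p$-point. At each prime $l \neq p$, impose a local deformation condition which contains the point $r_{\pi,\iota}|_{G_{\bQ_l}}$ (so is non-empty) and whose characteristic-zero points have the same Weil--Deligne type as $r_{\pi,\iota}|_{G_{\bQ_l}}$: the Steinberg quotient of \cite[Corollary 2.6.7]{Kis09} when $\pi_l$ is a twist of Steinberg, a fixed (irreducible) type quotient when $\pi_l$ is supercuspidal, and a fixed (principal-series) type quotient otherwise. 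Allow extra ramification only at Taylor--Wiles primes, which we take away from the ramification of $\pi$, so that $\pi'$ is of principal-series type there and condition (4) is unaffected.

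Now run the patching argument. Since $\overline{r}_{\pi,\iota}$ is modular --- it comes from $\pi$ --- and has big image (its projective image contains a conjugate of $\PSL_2(\bF_{p^a})$ with $p^a > 5$), adding Taylor--Wiles primes kills the dual Selmer group of $\ad^0 \overline{r}_{\pi,\iota}$, and patching gives a patched deformation ring $R_\infty$ acting faithfully on a Cohen--Macaulay patched module $M_\infty$ of weight-$2$ automorphic forms, with $R_\infty$ a quotient of $R_{\mathrm{loc}} \llbracket X_1, \dots, X_g \rrbracket$ of the expected Krull dimension. Exactly as in the proof of Theorem \ref{thm_automorphy_of_symmetric_power}, faithfulness and the Cohen--Macaulay property put every irreducible component of $\Spec R_\infty$ in the support of $M_\infty$; since $R_p^\tau$ has a non-ordinary component, so does $R_\infty$, and a characteristic-zero point on it produces an automorphic representation $\pi'$ with $\overline{r}_{\pi',\iota} \cong \overline{r}_{\pi,\iota}$ which is not $\iota$-ordinary. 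The remaining assertions are read off from the local conditions: $\pi'$ has weight $2$ with determinant a finite-order twist of $\epsilon^{-1}$; $\rec_{\bQ_p} \pi'_p \cong \omega_1 \oplus \omega_2$ with $\omega_i$ of conductor dividing $p^3$, since the type at $p$ was $\psi_1 \oplus \psi_2$ and the monodromy operator of a potentially crystalline representation vanishes; $\pi'$ is non-CM and cuspidal because $\overline{r}_{\pi',\iota}$ has big image (cuspidality also rules out the degenerate reducible points of the local conditions at the $l \neq p$); and $\pi'_l$ is a twist of Steinberg, resp.\ supercuspidal, if and only if $\pi_l$ is, for all $l \neq p$.

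The main obstacle is the local claim at $p$. Non-emptiness of $R_p^\tau$ for a suitable $\tau$ is automatic, since every $\overline{r}_{\pi,\iota}|_{G_{\bQ_p}}$ has a Serre weight and one only needs $\tau$ whose $\GL_2(\bZ_p)$-type contains that weight modulo $p$. What requires work is producing a non-ordinary lift of type $\tau$ while keeping $\tau$ a sum of two characters of bounded conductor: rather than invoking the full Breuil--M\'ezard conjecture, one argues by hand --- exhibiting an explicit non-ordinary Kisin (or Breuil) module lifting $\overline{r}_{\pi,\iota}|_{G_{\bQ_p}}$ of type $\tau$, or appealing to the explicit structure of two-dimensional potentially Barsotti--Tate deformation rings for principal-series types with distinct ramified characters (Kisin, Gee--M\'ezard) --- to see that the non-ordinary locus of $R_p^\tau$ is a non-empty union of irreducible components. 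The configurations requiring care are the tr\`es ramifi\'ee ones, where $\overline{r}_{\pi,\iota}|_{G_{\bQ_p}}$ admits no weight-$2$ crystalline lift and one is forced to use a genuinely wildly ramified type; the generous bound $p^3$ is there to handle these uniformly.
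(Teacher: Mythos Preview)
Your approach is genuinely different from the paper's, and the step you flag as ``the main obstacle'' is in fact a gap that your sketch does not close.

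The paper does not patch at all. It works directly with the cohomology of a quaternionic Shimura variety, with coefficients in Bushnell--Kutzko types $M_l$ at the supercuspidal primes and in a character $\chi_1 \otimes \chi_2$ of $U_0(p^3)$ at $p$ (where $\chi_1/\chi_2$ has conductor $p^3$, lifting a pair $\overline{\chi}_1, \overline{\chi}_2$ for which the mod-$p$ cohomology at level $U_0(p)$ is nonzero at $\mathfrak{m}_\pi$). The existence of a non-ordinary $\pi'$ is reduced to the strict inequality
\[
h^\delta(Y(U_0(p^3)), M_E(\chi_1 \otimes \chi_2)^\vee)_{\mathfrak{m}_\pi} \;>\; h^\delta(\cdots)^{s=0}_{\mathfrak{m}_\pi} + h^\delta(\cdots)^{s=1}_{\mathfrak{m}_\pi},
\]
where $s$ is the slope of $[U_0(p^3)\alpha_{p,1}U_0(p^3)]$. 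Hida theory identifies the two right-hand terms with mod-$p$ ordinary cohomology at level $U_0(p)$ for the characters $\overline{\chi}_1 \otimes \overline{\chi}_2$ and $\overline{\chi}_2 \otimes \overline{\chi}_1$, and the inequality is then a Mackey-formula count: the semisimplification of $\Ind_{U_0(p^3)}^{U_0(p)} \overline{\chi}_1 \otimes \overline{\chi}_2$ contains each of $\overline{\chi}_1 \otimes \overline{\chi}_2$ and $\overline{\chi}_2 \otimes \overline{\chi}_1$ with multiplicity at least $p+1$. No structural information about local deformation rings at $p$ is used; the bound $p^3$ is there to make this multiplicity count work, not to handle tr\`es ramifi\'ee phenomena.

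Your route, by contrast, defers everything to two local inputs you do not prove: (i) that for some principal-series type $\tau$ of conductor $\leq p^3$ the ring $R_p^\tau$ has a non-ordinary $\overline{\bQ}_p$-point, and (ii) that the non-ordinary component of $\Spec R_p^\tau$ actually lies in the support of the patched module. Your claim that $R_\infty$ acts \emph{faithfully} on $M_\infty$ is unjustified: Cohen--Macaulayness and the dimension count only force $\Supp_{R_\infty} M_\infty$ to be a union of irreducible components of $\Spec R_\infty$, and you have no a priori automorphic point on a non-ordinary component to anchor the support there (nor do you argue that $R_p^\tau$ is a domain). Establishing (i) and (ii) uniformly for all $\overline{r}_{\pi,\iota}|_{G_{\bQ_p}}$, including $p=2$, is exactly the Breuil--M\'ezard-type input that the lemma --- as its internal label \texttt{lem\_replacement\_for\_breuil\_mezard\_weak} signals --- is designed to replace.
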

\begin{proof}
Let $\Sigma$ be the set of primes $l \neq p$ such that $\pi_l$ is a twist of the Steinberg representation, and let $T$ be the set of primes $l \neq p$ such that $\pi_l$ is supercuspidal. Let $l_0 \geq 5$ be a prime such that $l_0 \equiv 1 \text{ mod }p$, $\pi_{l_0}$ is unramified, and $\overline{r}_{\pi, \iota}(\Frob_{l_0})$ has distinct eigenvalues (such a prime exists because of our assumption on the image of $\operatorname{Proj} \overline{r}_{\pi, \iota}$). Fix a coefficient field $E$ containing a $p^2$th root of unity. If $l$ is a prime such that $\pi_l$ is supercuspidal, then we can find, after possibly enlarging $E$, an $\cO[\GL_2(\bZ_l)]$-module $M_l$, finite free as $\cO$-module, such that $M_l \otimes_{\cO, \iota} \bC$ is a type for the Bernstein component containing $\pi_l$, in the sense of \cite[Definition A.1.4.1]{breuil-mezard}. We define $M = \otimes_{v \in T} M_l$, $M_k = M \otimes_\cO k$, and $M_E = M \otimes_\cO E$. We write $M^\vee$, $M_k^\vee$ and $M_E^\vee$ for the $\cO$-, $k$-, and $E$-linear duals of these $U$-modules, equipped with the dual action of $U$. 

Let $D$ denote  the quaternion algebra over $\bQ$ such that if $l$ is a prime, then $D$ is ramified at $l$ if and only if $l \in \Sigma$. (Thus $D$ is ramified at $\infty$ if and only if $|\Sigma|$ is odd.) Fix a maximal order $\cO_D \subset D$ and for each prime $l \not\in \Sigma$ an identification $\cO_D \otimes_\bZ \bZ_l \cong M_2(\bZ_l)$. If $U = \prod_l U_l \subset (\cO_D \otimes_\bZ \widehat{\bZ})^\times$ is an open compact subgroup, we write $Y(U)$ for the locally symmetric space of level $U$ (namely the object denoted $X_{\Res_{D / \bQ} \bG_m}^U$ in \cite[\S 3.1]{new-tho}). We regard $M$ as an $\cO[U]$-module by projection to $\prod_{l \in T} \GL_2(\bZ_l)$. If $U_p = \GL_2(\bZ_p)$ and $r \geq 1$ then we write $U_0(p^r) \subset U$ for the open compact subgroup with the same component at primes $l \neq p$ and component 
\[ \left\{ \left( \begin{array}{cc} a & b \\ c &d \end{array}\right) \in \GL_2(\bZ_p) \mid c \equiv 0 \text{ mod }p^r \right\} \]
 at the prime $p$. We identify any pair of characters $\chi_1, \chi_2 : (\bZ / p^r \bZ)^\times \to \cO^\times$ with the character $\chi_1 \otimes \chi_2 : U_0(p^r) \to \cO^\times$ given by the formula 
\[ \chi_1 \otimes \chi_2\left( \left( \begin{array}{cc} a & b \\ c &d \end{array}\right) \right) = \chi_1(a \text{ mod }p^r) \chi_2(d \text{ mod }p^r). \]
We write $M(\chi_1 \otimes \chi_2) = M \otimes_\cO \cO(\chi_1 \otimes \chi_2)$, regarded as $\cO[U_0(p^r)]$-module. We use similar notation for $k$- and $E$-valued characters.

 Let $\delta = 0$ if $D$ is ramified at $\infty$ and $\delta = 1$ otherwise. Let $U = \prod_l U_l$ be the open compact subgroup defined as follows:
 \begin{itemize}
 \item $U_{l_0} = \Iw_{l_0, 1}$.
 \item If $l \not\in \Sigma \cup \{ l_0 \}$, then $U_l = \GL_2(\bZ_l)$.
 \item If $l \in \Sigma$ and $\pi_l \cong \mathrm{St}_2(\chi_l)$, then $U_l = \ker ( \chi_l \circ \det : (\cO_D \otimes_\bZ \bZ_l)^\times \to \bC^\times )$. 
 \end{itemize}
 Then $U$ is neat, in the sense of \cite[\S 3.1]{new-tho} (because of the choice of $U_{l_0}$), and we can find characters $\overline{\chi}_1, \overline{\chi}_2 : \bF_p^\times \to 
k^\times$ such that 
\[ H^\delta(Y(U_0(p)), M_k(\overline{\chi}_1 \otimes \overline{\chi}_2)^\vee)_{\ffrm_\pi} \neq 0, \]
where $\ffrm_\pi \subset \bT^{univ}_{D, \Sigma \cup T \cup \{ l_0, p \}}$ is the maximal ideal associated to $\iota^{-1}\pi^\infty$ (notation for the Hecke algebra as in \S \ref{sec_ALT_for_sym_powers}), cf.~\cite[Corollary 2.12]{bdj}.

Let $\chi_1, \chi_2 : (\bZ / p^3 \bZ)^\times \to \cO^\times$ be lifts of $\overline{\chi}_1, \overline{\chi}_2$ such that $\chi_1 / \chi_2$ has conductor $p^3$. We will show that \begin{equation}\label{eqn_desired_non-vanishing} H^\delta(Y(U_0(p^3)), M_E(\chi_1 \otimes \chi_2)^\vee)^{0 < s < 1}_{\ffrm_\pi} \neq 0,
\end{equation}
where the superscript denotes the subspace where the Hecke operator $[U_0(p^3) \alpha_{p, 1} U_0(p^3)]$ acts with eigenvalues that have $p$-adic valuation $0 < s < 1$. Assuing (\ref{eqn_desired_non-vanishing}) holds, we can complete the proof of the lemma. Indeed, the Jacquet--Langlands correspondence then implies the existence of a regular algebraic, cuspidal automorphic representation $\pi'$ of $\GL_2(\bA_\bQ)$ satisfying the following conditions:
\begin{itemize}
\item There is an isomorphism $\overline{r}_{\pi, \iota} \cong \overline{r}_{\pi', \iota}$.
\item $\pi'$ has weight 2.
\item $\iota^{-1} \pi'_p|_{U_0(p^3)}$ contains a copy of $\chi_1 \otimes 
\chi_2$ on which $[U_0(p^3) \alpha_{p, 1} U_0(p^3)]$ acts with eigenvalue of $p$-adic valuation 
$0 < s <  1$. 
\item If $l \in \Sigma$ then  $\pi'_l$ is a twist of the Steinberg representation.
\item If  $l \in T$ then $\pi'_l|_{\GL_2(\bZ_l)}$ contains $M_l \otimes_{\cO, \iota} \bC$, hence (by definition of a type) $\pi'_l$ is supercuspidal.
\item If $l \not\in \Sigma \cup T \cup \{ p \}$, then $\pi'_l$ is a principal series representation. 
\end{itemize}
By \cite[A.2.4]{breuil-mezard}, there is an isomorphism $\rec_{\bQ_p} \pi'_p 
\cong \omega_1 \oplus \omega_2$, where $\omega_1, \omega_2 : W_{\bQ_p} \to 
\bC^\times$ are characters such that $\omega_1 \circ 
\Art_{\bQ_p}|_{\bZ_p^\times} = \iota \chi_1$ and $\omega_2 \circ 
\Art_{\bQ_p}|_{\bZ_p^\times} = \iota \chi_2$. Moreover, the space 
$\Hom_{U_0(p^3)}(\chi_1 \otimes \chi_2, \iota^{-1} \pi'_p)$ is 1-dimensional. 
Since the Hecke operator $[U_0(p^3) \alpha_{p, 1} U_0(p^3)]$ acts  with eigenvalue of $p$-adic valuation $0 < s <  
1$, $\pi'$ is not $\iota$-ordinary and therefore satisfies our requirements (cf.~\cite[Lemma 5.2]{ger}).

We now show that (\ref{eqn_desired_non-vanishing}) holds. We have
\[ \begin{split} h^\delta(Y(U_0(p^3)), M_E(\chi_1 \otimes \chi_2)^\vee)^{0 < s < 1}_{\ffrm_\pi}  & + h^\delta(Y(U_0(p^3)), M_E(\chi_1 \otimes \chi_2)^\vee)^{s = 0}_{\ffrm_\pi} \\ & + h^\delta(Y(U_0(p^3)), M_E(\chi_1 \otimes \chi_2)^\vee)^{s = 1}_{\ffrm_\pi}  \\ & = h^\delta(Y(U_0(p^3)), M_E(\chi_1 \otimes \chi_2)^\vee)_{\ffrm_\pi}, \end{split} \]
(where lowercase $h$ denotes dimension of cohomology over  $k$  or $E$). For each $\pi'$ contributing to $H^\delta(Y(U_0(p^3)), M_E(\chi_1 \otimes \chi_2)^\vee)$, $\iota^{-1} \pi'_p|_{U_0(p^3)}$ also contains a copy of $\chi_2 \otimes 
\chi_1$ with multiplicity one. The product of the eigenvalues of $[U_0(p^3) \alpha_{p, 1} U_0(p^3)]$ on the $\chi_1\otimes\chi_2$ and $\chi_2\otimes\chi_1$ isotypic spaces of $\iota^{-1} \pi'_p|_{U_0(p^3)}$ has $p$-adic valuation $1$. We deduce that
\[ h^\delta(Y(U_0(p^3)), M_E(\chi_1 \otimes \chi_2)^\vee)^{s = 1}_{\ffrm_\pi} = h^\delta(Y(U_0(p^3)), M_E(\chi_2 \otimes \chi_1)^\vee)^{s = 0}_{\ffrm_\pi}. \]
Moreover,
\[ h^\delta(Y(U_0(p^3)), M_E(\chi_1 \otimes \chi_2)^\vee)^{s = 0}_{\ffrm_\pi}  = h^\delta(Y(U_0(p)),  M_k(\overline{\chi}_1 \otimes  \overline{\chi}_2)^\vee)^{ord}_{\ffrm_\pi} \]
and 
\[ h^\delta(Y(U_0(p^3)), M_E(\chi_2 \otimes \chi_1)^\vee)^{s = 0}_{\ffrm_\pi}  = h^\delta(Y(U_0(p)),  M_k(\overline{\chi}_2 \otimes  \overline{\chi}_1)^\vee)^{ord}_{\ffrm_\pi}, \]
by Hida  theory. It  is  therefore  enough to show that
\begin{multline*} h^\delta(Y(U_0(p^3)), M_E(\chi_1 \otimes \chi_2)^\vee)_{\ffrm_\pi} > h^\delta(Y(U_0(p)),  M_k(\overline{\chi}_1 \otimes  \overline{\chi}_2)^\vee)^{ord}_{\ffrm_\pi}  \\ +  h^\delta(Y(U_0(p)),  M_k(\overline{\chi}_2 \otimes  \overline{\chi}_1)^\vee)^{ord}_{\ffrm_\pi}, 
\end{multline*}
or even  that
\begin{multline*} h^\delta(Y(U_0(p^3)), M_k(\overline{\chi}_1 \otimes \overline{\chi}_2)^\vee)_{\ffrm_\pi} > h^\delta(Y(U_0(p)),  M_k(\overline{\chi}_1 \otimes  \overline{\chi}_2)^\vee)_{\ffrm_\pi} \\ +  h^\delta(Y(U_0(p)),  M_k(\overline{\chi}_2 \otimes  \overline{\chi}_1)^\vee)_{\ffrm_\pi}. 
\end{multline*}
Using the exactness of $H^\delta(Y(U_0(p)), (?)^\vee)_{\ffrm_\pi}$ as a (contravariant) functor of smooth $k[U_0(p)]$-modules, it is therefore enough to show that 
$\Ind^{U_0(p)}_{U_0(p^3)} \overline{\chi}_1 \otimes \overline{\chi}_2$ contains 
$\overline{\chi}_1 \otimes \overline{\chi}_2$ and $\overline{\chi}_2 \otimes 
\overline{\chi}_1$ as Jordan--H\"older factors with multiplicity at least 2 (or 
when $\overline{\chi}_1 = \overline{\chi}_2$, that it contains 
$\overline{\chi}_1 \otimes \overline{\chi}_2$ with multiplicity at least 3). 

This is true. Indeed, the 
semisimplification of a smooth $k[\Iw_p]$-module (say finite-dimensional as $k$-vector space) is determined by its 
restriction to the diagonal torus in $\Iw_p$, and we can then use 
Mackey's formula to show that if $\overline{\psi} : \bF_p^\times \to k^\times$ 
is a character then the semisimplification of $\Ind^{U_0(p)}_{U_0(p^3)} 
\overline{\chi}_1 \otimes \overline{\chi}_2$ contains 
$\overline{\chi}_1\overline{\psi} \otimes 
\overline{\chi}_2\overline{\psi}^{-1}$ with multiplicity $p+2$ if 
$\overline{\psi} = 1$ and multiplicity $p+1$ if $\overline{\psi} \neq 1$. 
\end{proof}
\begin{lemma}\label{lem_replacement_for_breuil_mezard}
Let $\pi$ be a regular algebraic, cuspidal automorphic representation of $\GL_2(\bA_\bQ)$. Let $p \geq 3$ be a prime, let $\iota : \overline{\bQ}_p \to \bC$ be an isomorphism, and suppose that $\operatorname{Proj} \overline{r}_{\pi, \iota}(G_\bQ)$ contains a conjugate of $\PSL_2(\bF_{p^a})$ for some $p^a > 5$. Then we can find another regular algebraic, cuspidal automorphic representation $\pi'$ of $\GL_2(\bA_\bQ)$ with the following properties:
\begin{enumerate}
\item There is an isomorphism $\overline{r}_{\pi, \iota} \cong \overline{r}_{\pi', \iota}$.
\item $\pi'$ has weight 2 and is not $\iota$-ordinary. 
\item There is an isomorphism $\rec_{\bQ_p} \pi'_p \cong \omega_1 \oplus \omega_2$, where $\omega_1, \omega_2 : W_{\bQ_p} \to \bC^\times$ are characters of conductor dividing $p^3$. In particular, $N(\pi'_p) | p^6$.
\item For each prime $l \neq p$, $r_{\pi, \iota}|_{G_{\bQ_l}} \sim r_{\pi', \iota}|_{G_{\bQ_l}}$. In particular, $N(\pi_l) = N(\pi'_l)$. 
\end{enumerate}
\end{lemma}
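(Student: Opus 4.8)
The plan is to deduce this from Lemma~\ref{lem_replacement_for_breuil_mezard_weak} by re-running its proof with additional care at the ramified places away from $p$, so as to pin down not merely the Steinberg/supercuspidal trichotomy of $\pi'_l$ but its full inertial type.

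Concretely, I would repeat the construction in the proof of Lemma~\ref{lem_replacement_for_breuil_mezard_weak}, modifying the coefficient module $M = \bigotimes_l M_l$ and the level subgroup $U$ as follows. At every prime $l \ne p$ at which $\pi_l$ is ramified I impose a type for the Bernstein component of $\pi_l$: on the split factor $\GL_2(\bQ_l)$ when $l \notin \Sigma$ — a Bushnell--Kutzko type, exactly as is already done in \emph{loc.\ cit.}\ at the supercuspidal places $l \in T$, and likewise at any ramified principal series places — and on the quaternionic factor $(\cO_D \otimes_\bZ \bZ_l)^\times$ when $l \in \Sigma$, where $D \otimes_F F_l$ is a division algebra and the relevant type for $\pi_l \cong \mathrm{St}_2(\chi_l)$ is the one-dimensional representation $\chi_l \circ \det$ (so that the level subgroup $\ker(\chi_l \circ \det)$ used in \emph{loc.\ cit.}\ is refined to this coefficient system, now forcing $\pi'_l \cong \mathrm{St}_2(\chi_l)$). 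At the Steinberg places one still records, through the Iwahori-level Hecke data contained in $\ffrm_\pi$, the distinction between the Steinberg and the principal-series constituents, as in \emph{loc.\ cit.}; and at places $l \ne p$ where $\pi_l$ is unramified the level and coefficients remain spherical, forcing $\pi'_l$ unramified. Because $\pi$ itself contributes to the relevant cohomology — its local components literally contain the chosen types — the key mod-$p$ cohomological non-vanishing in \emph{loc.\ cit.}\ (the strict dimension inequality coming from the $\Ind^{U_0(p)}_{U_0(p^3)}$ computation, now tensored with the additional local factors) goes through verbatim, and yields a $\pi'$ satisfying (1), (2), (3) — whence $N(\pi'_p) \mid p^6$, since $\rec_{\bQ_p}\pi'_p \cong \omega_1 \oplus \omega_2$ with each $\omega_i$ of conductor dividing $p^3$ — with the refinement that for every $l \ne p$ the component $\pi'_l$ lies in the same Bernstein component as $\pi_l$, and is a twist of Steinberg if and only if $\pi_l$ is.

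To obtain (4) it then suffices to recall that, for $\GL_2$ and $l \ne p$, two lifts of $\overline{r}|_{G_{\bQ_l}}$ lie on a common irreducible component of the local lifting ring — i.e.\ satisfy $\sim$ — provided their Weil--Deligne representations have isomorphic restrictions to $I_{\bQ_l}$ and agree as to whether the monodromy operator $N$ vanishes; this follows from the (by now standard) analysis of these rings, cf.\ \cite[\S 1]{BLGGT}. Since $\pi'_l$ and $\pi_l$ lie in the same Bernstein component, the inertial local Langlands correspondence gives $\WD(r_{\pi,\iota}|_{G_{\bQ_l}})|_{I_{\bQ_l}} \cong \WD(r_{\pi',\iota}|_{G_{\bQ_l}})|_{I_{\bQ_l}}$; and supercuspidal and principal-series representations have $N = 0$ while twists of Steinberg have $N \ne 0$, so the monodromy dichotomy matches as well. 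Hence $r_{\pi,\iota}|_{G_{\bQ_l}} \sim r_{\pi',\iota}|_{G_{\bQ_l}}$ for every $l \ne p$; the equality $N(\pi_l) = N(\pi'_l)$ is then automatic, the conductor being a $\sim$-invariant.

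I expect the only real subtlety — and the reason Lemma~\ref{lem_replacement_for_breuil_mezard_weak} does not give this directly — to lie at the Steinberg places $l \in \Sigma$, where one must upgrade ``some twist of Steinberg'' to the correct twist. The mechanism above handles it, since over the quaternion algebra ramified at $l$ the type $\chi_l \circ \det$ determines $\pi'_l$ outright; what remains is to check that building in this finer type does not destroy the cohomological non-vanishing at $\ffrm_\pi$, which is where the hypothesis $p \geq 3$ enters (the residual representation $\overline{r}_{\pi,\iota}|_{G_{\bQ_l}}$, fixed by $\overline{r}_{\pi,\iota} \cong \overline{r}_{\pi',\iota}$, then pins down the residual local picture), and this verification should be routine. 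As an alternative one could first twist $\pi$ by a finite-order Hecke character, ramified only at $p$ and at the primes of $\Sigma$, chosen so that each $\pi_l$ becomes an unramified twist of Steinberg, apply the lemma in that case, and untwist; incorporating the types directly is cleaner.
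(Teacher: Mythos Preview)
Your approach is sound in outline but takes a genuinely different route from the paper's. The paper's proof is two sentences: it first invokes Lemma~\ref{lem_replacement_for_breuil_mezard_weak} to obtain a $\pi'$ satisfying (1)--(3), and then cites \cite[Corollary 3.1.7]{gee061} --- a general level-adjusting theorem built on automorphy lifting --- to replace this $\pi'$ by one that in addition satisfies $r_{\pi,\iota}|_{G_{\bQ_l}} \sim r_{\pi',\iota}|_{G_{\bQ_l}}$ for every $l \neq p$, while preserving (1)--(3). The conductor equality is then extracted from purity via \cite[p.~524, Remark (6)]{BLGGT}. The hypothesis $p \geq 3$ enters through the appeal to \cite{gee061}, not at the Steinberg places as you guess.

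Your proposal instead re-opens the proof of the weak lemma and refines the coefficient system by Bushnell--Kutzko types at \emph{every} ramified $l \neq p$, so that matching Bernstein components plus the Steinberg/non-Steinberg dichotomy forces $\sim$ directly. The endgame is correct: for $\GL_2$ at $l \neq p$, same inertial Weil--Deligne type together with the same ``$N = 0$ or not'' does pin down the component of the local lifting ring. What this buys is self-containment --- no appeal to the black box from \cite{gee061} --- at the cost of redoing the construction. One caution: the step you call ``routine'' is a little less so at the ramified principal series places, which do not appear in the weak lemma's setup at all. There you must check that imposing the extra type does not kill $H^\delta(Y(U_0(p)), M_k(\overline\chi_1 \otimes \overline\chi_2)^\vee)_{\ffrm_\pi}$; this is not literally ``verbatim'' from \cite[Corollary 2.12]{bdj}, since it needs the compatibility of the type's mod-$p$ reduction with $\overline{r}_{\pi,\iota}|_{I_{\bQ_l}}$, though for $\GL_2$ this is indeed known. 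The paper sidesteps all of this by outsourcing the local adjustment to Gee's result.
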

\begin{proof}
Lemma \ref{lem_replacement_for_breuil_mezard_weak} implies the existence of a $\pi'$ satisfying requirements (1)--(3). We can appeal to \cite[Corollary 3.1.7]{gee061} to replace it with a $\pi'$ also satisfying $r_{\pi, \iota}|_{G_{\bQ_l}} \sim r_{\pi', \iota}|_{G_{\bQ_l}}$ for each prime $l \neq p$. By purity, $r_{\pi, \iota}|_{G_{\bQ_l}}$ and $r_{\pi', \iota}|_{G_{\bQ_l}}$ `strongly connect' to each other in the terminology of \cite[\S 1.3]{BLGGT}. Remark (6) of \cite[p.\ 524]{BLGGT} implies that $N(\pi_l) = N(\pi'_l)$.
\end{proof}
\begin{defn}\label{def_seasoned}
Let $\pi$ be a regular algebraic, cuspidal automorphic representation of $\GL_2(\bA_\bQ)$ of weight 2 such that $2 \not\in sc(\pi)$ and let $q, t, r$ be prime numbers. We say that $\pi$ is seasoned with respect to $(q, t, r)$ if the following properties hold:
\begin{enumerate}
\item $t$ divides $q+1$, $t \not\in sc(\pi)$, and $t > \max(10,  8n(n-1))$ and $(q+1) / t > 2$.
\item There is an isomorphism $\rec_{\bQ_q} \pi_q \cong \Ind_{W_{\bQ_{q^2}}}^{W_{\bQ_{q}}} \chi$, where $\chi : W_{\bQ_{q^2}} \to \bC^\times$ is a character such that $\chi|_{I_{\bQ_q}}$ has order $t$. (In particular, $q \in sc(\pi)$ and $N(\pi_q) = q^2$.)
\item $r$ is a primitive root modulo $q$. If $M$ denotes the least common multiple of the prime-to-$q$ part of $M_\pi$ and $\prod_{p \in sc(\pi) - \{ q \}} p^{6}$, then $r \equiv 1 \text{ mod }M$.
\item $\pi_r$ is an unramified twist of the Steinberg representation.
\item For each prime $p  \in sc(\pi)$ and for each irreducible dihedral representation $\overline{\rho} : \Gal(\overline{\bQ}/\bQ) \to \GL_2(\overline{\bF}_p)$ of prime-to-$p$ conductor dividing $M q^2$, there exists a prime number $s$ such that $\pi_{s}$ is an unramified twist of the Steinberg representation, $\overline{\rho}(\Frob_s)$ is scalar, and $s \not\equiv 1 \text{ mod }p$.
\end{enumerate}
\end{defn}
\begin{proposition}\label{prop_seasoned_large_image}
If $\pi$ is seasoned with respect to $(q, t, r)$ then for each prime $p \in sc(\pi)$ there exists an isomorphism $\iota : \overline{\bQ}_p  \to \bC$ such that  $\overline{r}_{\pi, \iota}(G_\bQ)$  contains a conjugate of $\SL_2(\bF_{p^a})$ for some $p^a > 2n-1$.
\end{proposition}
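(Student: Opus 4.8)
The plan is to fix $p\in sc(\pi)$ and show the conclusion holds for every isomorphism $\iota:\overline{\bQ}_p\to\bC$, arguing separately according to whether $p=q$ or $p\neq q$. In each case I will exhibit an element of large (prime-to-$p$) order in $\operatorname{Proj}\overline{r}_{\pi,\iota}(G_\bQ)$, rule out the possibilities that $\overline{r}_{\pi,\iota}$ is reducible, dihedral, or of exceptional type, and then invoke Dickson's classification of the finite subgroups of $\PGL_2(\overline{\bF}_p)$ (valid since $p$ is odd, because $2\notin sc(\pi)$) together with the standard passage from the projective image to the linear image. Throughout we use that $t$ is prime and, since $t\mid q+1$ and $t>10$, divides neither $q$ nor $q-1$.

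Suppose first that $p\neq q$. By local--global compatibility and condition (2) of Definition~\ref{def_seasoned}, $\WD(r_{\pi,\iota}|_{G_{\bQ_q}})$ is a monodromy-free induction from $W_{\bQ_{q^2}}$ of a character whose restriction to $I_{\bQ_q}$ has order $t$; as $q\neq p$ this forces $r_{\pi,\iota}|_{I_{\bQ_q}}$ to have finite image, cyclic of order $t$, with regular semisimple non-identity elements (the relevant eigenvalue ratio is $\chi^{q-1}|_{I_{\bQ_q}}$, of order $t$ as $t\nmid q-1$). Since $p\nmid t$, reduction mod $p$ is harmless, so $\operatorname{Proj}\overline{r}_{\pi,\iota}(G_\bQ)$ contains an element of order $t>10>5$, ruling out $A_4,S_4,A_5$. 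If $\overline{r}_{\pi,\iota}^{ss}=\psi_1\oplus\psi_2$ were reducible, then $(\psi_1/\psi_2)|_{I_{\bQ_q}}$ would be a tamely ramified character of order $t$, hence would factor through $(\bZ/q\bZ)^\times$, impossible as $t\nmid q-1$; so $\overline{r}_{\pi,\iota}$ is irreducible. If it were dihedral, then by Lemma~\ref{lem_semisimple_conductor} its prime-to-$p$ conductor divides $M_\pi$; since the $q$-part of $M_\pi$ equals $N(\pi_q)=q^2$ and its prime-to-$q$ part divides $M$, this conductor divides $Mq^2$, so condition (5) applies with $\overline{\rho}=\overline{r}_{\pi,\iota}$ and produces a prime $s$ with $\pi_s$ an unramified twist of Steinberg, $s\not\equiv 1\bmod p$, and $\overline{r}_{\pi,\iota}(\Frob_s)$ scalar; but local--global compatibility at $s$ shows the (semisimple part of the) matrix $\overline{r}_{\pi,\iota}(\Frob_s)$ has eigenvalues in ratio $s^{\pm1}\not\equiv 1\bmod p$, so is not scalar, a contradiction. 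Dickson's theorem now gives $\PSL_2(\bF_{p^a})\subseteq\operatorname{Proj}\overline{r}_{\pi,\iota}(G_\bQ)\subseteq\PGL_2(\bF_{p^a})$ for some $a\geq 1$; the order-$t$ element lies in $\PGL_2(\bF_{p^a})$, so $t\mid p^a-1$ or $t\mid p^a+1$ (as $t\neq p$), whence $p^a\geq t-1>2n-1$ by condition (1). Finally $[\overline{r}_{\pi,\iota}(G_\bQ),\overline{r}_{\pi,\iota}(G_\bQ)]$ is a perfect subgroup of $\SL_2(\overline{\bF}_p)$ surjecting onto $\PSL_2(\bF_{p^a})$; since $\PSL_2(\bF_{p^a})$ has no faithful $2$-dimensional representation in characteristic $p$, this subgroup is conjugate to $\SL_2(\bF_{p^a})$, which is therefore contained up to conjugacy in $\overline{r}_{\pi,\iota}(G_\bQ)$.

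Now suppose $p=q$; here the ramification at $q$ is $p$-adic and the argument is more delicate. Condition (2) shows that $r_{\pi,\iota}|_{G_{\bQ_q}}$ is de Rham with irreducible associated Weil--Deligne representation, hence irreducible, and (as $\pi$ has weight $2$) potentially Barsotti--Tate, with tame inertial type $\chi|_{I_{\bQ_q}}\oplus\chi^q|_{I_{\bQ_q}}$; note $t<q-1$ since $(q+1)/t\geq 3$. The key point is that $\overline{r}_{\pi,\iota}$ is again irreducible: by the known description of mod-$q$ reductions of such potentially Barsotti--Tate representations, either $\overline{r}_{\pi,\iota}|_{G_{\bQ_q}}$ is irreducible, in which case $\overline{r}_{\pi,\iota}$ is globally irreducible, or its restriction to $I_{\bQ_q}$ has Jordan--H\"older factors of the form $\overline{\chi}|_{I_{\bQ_q}}\cdot\overline{\epsilon}^{c_1}$ and $\overline{\chi}^q|_{I_{\bQ_q}}\cdot\overline{\epsilon}^{c_2}$ with $c_i\in\{0,1\}$, each of order divisible by $t$ (as $t$ is a prime coprime to the order $q-1$ of $\overline{\epsilon}|_{I_{\bQ_q}}$); in the latter case a reducible semisimplification $\psi_1\oplus\psi_2$ is impossible, because each $\psi_i|_{I_{\bQ_q}}$, being the restriction to $I_{\bQ_q}$ of a character of $G_\bQ^{\mathrm{ab}}$, factors through some $(\bZ/q^k\bZ)^\times$ and so has order prime to $t$. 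To produce a large-order element I use the prime $r$: by condition (4) and local--global compatibility, $\overline{r}_{\pi,\iota}|_{G_{\bQ_r}}$ is an extension of unramified characters whose Frobenius eigenvalues are in ratio $r^{\pm1}$, which by condition (3) (that $r$ is a primitive root modulo $q$) has order $q-1$ in $\bF_q^\times$; thus $\operatorname{Proj}\overline{r}_{\pi,\iota}(G_\bQ)$ contains an element of order $q-1>18>5$. Ruling out the dihedral and exceptional cases exactly as before, Dickson's theorem yields $\PSL_2(\bF_{q^a})\subseteq\operatorname{Proj}\overline{r}_{\pi,\iota}(G_\bQ)\subseteq\PGL_2(\bF_{q^a})$ for some $a\geq 1$; since $q+1\geq 3t>24n(n-1)$ we have $q^a\geq q>2n-1$, and passing to the derived subgroup as above exhibits a conjugate of $\SL_2(\bF_{q^a})$ inside $\overline{r}_{\pi,\iota}(G_\bQ)$.

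The step I expect to be the main obstacle is establishing the irreducibility of $\overline{r}_{\pi,\iota}$ in the case $p=q$: whereas for $p\neq q$ the order-$t$ tame inertial image at $q$ is directly visible in $\overline{r}_{\pi,\iota}$, here one must control the mod-$q$ reduction of a potentially Barsotti--Tate representation of $G_{\bQ_q}$ with a prescribed tame inertial type and extract from it that every Jordan--H\"older character of the restriction to $I_{\bQ_q}$ has order divisible by $t$, which is what rules out a globally reducible reduction. A secondary point requiring care is the conductor bookkeeping that makes condition (5) of Definition~\ref{def_seasoned} applicable to a hypothetical dihedral $\overline{r}_{\pi,\iota}$.
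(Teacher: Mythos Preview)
Your treatment of the case $p\neq q$ is correct and matches the paper's. In the case $p=q$, your idea of producing a large-order element from $\Frob_r$ is genuinely different from the paper's and in fact cleaner: the paper instead makes a \emph{particular} choice of $\iota$ (equivalently, of the integer $a$) so that the local computation of \cite[Theorem~4.6.1]{gee061} yields a projective element of order at least $t/2$, whereas your observation that the Steinberg condition at $r$ forces $\operatorname{Proj}\overline{r}_{\pi,\iota}(\Frob_r)$ to have order $q-1$ (since $r$ is a primitive root modulo $q$) works for \emph{every} $\iota$ and already gives $q^a\geq q>2n-1$.

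However, your irreducibility argument when $p=q$ has a real gap. You assert that if $\overline{r}_{\pi,\iota}|_{G_{\bQ_q}}$ is reducible then its inertial Jordan--H\"older factors are of the form $\overline{\chi}|_{I_{\bQ_q}}\cdot\overline{\epsilon}^{c_1}$ and $\overline{\chi}^q|_{I_{\bQ_q}}\cdot\overline{\epsilon}^{c_2}$. This is not what the reduction theory says: $\overline{\chi}|_{I_{\bQ_q}}$ is a genuine niveau~$2$ character (it has order $t\nmid q-1$, hence does not factor through $\bF_q^\times$), while any character of $G_{\bQ_q}$ restricted to tame inertia \emph{must} factor through $\bF_q^\times$ and so is a power of $\overline{\omega}_1=\overline{\epsilon}$. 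Indeed, your own next sentence (``each $\psi_i|_{I_{\bQ_q}}$\ldots factors through some $(\bZ/q^k\bZ)^\times$'') is exactly this observation. So the ``contradiction'' you obtain is only between two incompatible descriptions of the same object, one of which is simply wrong; it does not rule out reducibility. The actual description (cf.\ \cite[Theorem~4.6.1]{gee061}) gives $(\psi_1/\psi_2)|_{I_{\bQ_q}}=\overline{\epsilon}^{\pm(i-1)}$ for a specific $i\in\{1,\dots,q\}$ determined by the type, and this \emph{is} compatible with coming from global characters. The paper extracts a contradiction by a second, global step: since $r\equiv 1\bmod M$ the characters $\psi_i$ are unramified at $r$, so $(\psi_1/\psi_2)(\Frob_r)=r^{\pm(i-1)}$; Steinberg at $r$ forces this to equal $r^{\pm1}$; and the primitive-root condition then pins down $i\in\{0,2\}\bmod(q-1)$, which is incompatible with $b\mid i$, $b\leq i\leq q+1-b$, $b>2$. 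This is precisely why conditions (3), (4) and the inequality $(q+1)/t>2$ of Definition~\ref{def_seasoned} are there. Once you replace your irreducibility step by this argument, the rest of your proof (including your $\Frob_r$ large-element trick) goes through.
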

\begin{proof}
We split into cases depending on whether or not $p = q$. First suppose that $p 
= q$, and fix an isomorphism $\iota : \overline{\bQ}_q \to \bC$. We first claim 
that $\overline{r}_{\pi, \iota}$ is irreducible. Otherwise, there's an 
isomorphism $\overline{r}_{\pi, \iota} \cong \overline{\chi}_1 \oplus 
\overline{\chi}_2$ for characters $\overline{\chi}_i : G_\bQ \to 
\overline{\bF}_q^\times$ of prime-to-$q$ conductor dividing $M_\pi$. 

Let $\omega_2 : I_{\bQ_q} \to \overline{\bQ}_q^\times$ be the Teichm\"{u}ller lift of the fundamental character of niveau 2. Let $b = (q+1) / t$. Then $\iota^{-1} \chi|_{I_{\bQ_q}} = \omega_2^{a(q-1)b}$ for some integer $a \in \{1, \dots, t-1 \}$ (this tame character has niveau $2$ since it extends to $W_{\Q_{q^2}}$ and its order $t$ does not divide $q-1$). Write $a(q-1)b = i + (q+1)j$ for some $i \in \{ 1, \dots, q \}$. Then (\cite[Theorem 4.6.1]{gee061}) we have $(\overline{\chi}_1 / \overline{\chi}_2)|_{I_{\bQ_q}} = \epsilon^{i-1}$ or $\epsilon^{1-i}$. After relabelling, we can assume that $(\overline{\chi}_1 / \overline{\chi}_2)|_{I_{\bQ_q}} = \epsilon^{1-i}$.

Since $r \equiv 1 \text{ mod }M$ (in particular, the characters $\overline{\chi}_i$ are unramified at $r$), we have $(\overline{\chi}_1 / 
\overline{\chi}_2)(\Frob_r) = r^{i-1}$. Since $\pi_r$ is an unramified twist of 
the Steinberg representation, we have $(\overline{\chi}_1 / 
\overline{\chi}_2)(\Frob_r) = r$ or $r^{-1}$. Since $r$ is a primitive root 
modulo $q$, this implies that one of $i$, $i-2$ is divisible by $q-1$.

Since $b$ divides $i$, $i$ is among the numbers $b, 2b, \dots, q+1-b$. Since $b > 2$, we see that neither $i$ nor $i-2$ can be divisible by $q-1$. This contradiction implies that  $\overline{r}_{\pi,  \iota}$ is irreducible.

We next claim that $\overline{r}_{\pi, \iota}$ is not dihedral. If 
$\overline{r}_{\pi, \iota}$ is dihedral, then Lemma \ref{lem_semisimple_conductor} shows that the prime-to-$q$ part of the conductor of 
$\overline{r}_{\pi, \iota}$ divides $M_\pi$, so there exists a prime number $s$ 
such that  $\pi_s$  is an unramified twist of the Steinberg representation, 
$\overline{r}_{\pi, \iota}(\Frob_s)$ is scalar, and $s \not\equiv 1 \text{ mod 
}p$. This is a contradiction.

To finish the proof in the case $p=q$, we need to make a particular choice of 
$\iota$. Such a choice fixes the value of $a \in \{ 1, \dots, t-1 
\}$; conversely,  any $a  \in \{ 1, \dots, t-1 
\}$ can be obtained by making a suitable choice of $\iota$.
We  choose $a$ so that $i = b$. Invoking \cite[Theorem 4.6.1]{gee061} once 
more, we see that the projective image of $\overline{r}_{\pi, \iota}$ contains 
an element of order in  the set \[\{  (q+1)  / \gcd(q+1, i+1), (q+1) / \gcd(q+1, i-1), 
(q-1)/\gcd(q-1, i-1) \},\] therefore of order at least $t/2$.  Since $t /2 >5$, the 
classification  of  finite subgroups of $\PGL_2(\overline{\bF}_q)$ shows that 
the projective image  of   $\overline{r}_{\pi, \iota}$ contains  
$\PSL_2(\bF_{q^a})$ (and is contained  in $\PGL_2(\bF_{q^a})$) for some $a \geq 
1$. If $q^a \leq 2n-1$ then $q^{2a} -1 \leq 4n(n-1)$, so every element of 
$\PGL_2(\bF_{q^a})$ of order  prime to $q$ has order at most $4n(n-1)$.  Since  
$t / 2 > 4n(n-1)$, we see that  we  must  have $q^a > 2n-1$.

Now suppose that  $p \neq  q$, and fix  an isomorphism  $\iota : \overline{\bQ}_p \to \bC$. Then $p \neq t$ and $t \nmid q-1$, so $\overline{r}_{\pi, \iota}|_{G_{\bQ_q}}$ is irreducible and its projective image contains elements  of order  $t$, and so $\overline{r}_{\pi, \iota}$ is irreducible and its projective image contains  elements of order  $t$. Using again the classification of finite subgroups of $\PGL_2(\overline{\bF}_p)$, we see that to complete the proof we just need to show that $\overline{r}_{\pi, \iota}$ is not dihedral. If it is dihedral then there exists a prime number $s$ such that $\pi_s$ is an unramified twist of the Steinberg representation, $\overline{r}_{\pi, \iota}(\Frob_s)$ is scalar,  and $s  \not\equiv 1 \text{ mod }p$. This is a contradiction.
\end{proof}
To prove the next proposition, we need to find primes with special properties, namely  that their Frobenius elements act on the composita of certain field extensions in a prescribed way. Using the Chebotarev density theorem, we see that it is equivalent to exhibit Galois automorphisms acting in the correct way. In order to do so,  it is helpful to recall the following lemma from basic Galois theory.
\begin{lemma}\label{lem_Galois_group_of_compositum}
Let $E / K$ be a finite Galois extension, and let $K_1 / K, K_2 / K$ be Galois subextensions. Then the natural map $\Gal(K_1 K_2 / K) \to \Gal(K_1 / K) \times_{\Gal(K_1 \cap K_2 / K)} \Gal(K_2 / K)$ is an isomorphism.
\end{lemma}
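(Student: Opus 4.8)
The plan is to check that the natural map in question, namely restriction $\sigma \mapsto (\sigma|_{K_1},\sigma|_{K_2})$, is a well-defined group homomorphism, that it is injective, and that it is surjective onto the fibre product; the only content is the surjectivity. Well-definedness is formal: since $K_1/K$ and $K_2/K$ are Galois (hence normal) subextensions of $E/K$, every $\sigma \in \Gal(K_1K_2/K)$ carries $K_i$ into itself, so $\sigma|_{K_i} \in \Gal(K_i/K)$ makes sense, and $\sigma|_{K_1}$ and $\sigma|_{K_2}$ manifestly restrict to the same automorphism of $K_1 \cap K_2$, so the pair lies in $\Gal(K_1/K)\times_{\Gal(K_1\cap K_2/K)}\Gal(K_2/K)$. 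It is visibly multiplicative. Injectivity is also immediate: if $\sigma$ restricts to the identity on both $K_1$ and $K_2$, it fixes the compositum $K_1K_2$ pointwise, so $\sigma = \mathrm{id}$.

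For surjectivity I would argue by comparing orders. On the one hand $\#\Gal(K_1K_2/K) = [K_1K_2:K]$ since $K_1K_2/K$ is Galois (a compositum of Galois extensions inside $E$). On the other hand, writing $G = \Gal(K_1/K)\times_{\Gal(K_1\cap K_2/K)}\Gal(K_2/K)$, the first projection $G \to \Gal(K_1/K)$ is surjective — given $\tau_1$, since $K_1\cap K_2/K$ is Galois the restriction $\Gal(K_2/K)\to\Gal(K_1\cap K_2/K)$ is onto, so one can choose $\tau_2$ with $\tau_2|_{K_1\cap K_2}=\tau_1|_{K_1\cap K_2}$ — with kernel $\{1\}\times\Gal(K_2/K_1\cap K_2)$, whence $\#G = [K_1:K]\cdot[K_2:K_1\cap K_2] = [K_1:K][K_2:K]/[K_1\cap K_2:K]$. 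Now one invokes the standard degree formula $[K_1K_2:K] = [K_1:K][K_2:K]/[K_1\cap K_2:K]$, valid because (say) $K_1/K$ is Galois: indeed $K_1K_2/K_2$ is then Galois and restriction gives $\Gal(K_1K_2/K_2)\xrightarrow{\sim}\Gal(K_1/K_1\cap K_2)$, so $[K_1K_2:K_2]=[K_1:K_1\cap K_2]$ and the formula follows by multiplicativity of degrees. Comparing, $\#\Gal(K_1K_2/K)=\#G$, and an injective homomorphism between finite groups of equal order is an isomorphism.

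The mildly delicate ingredient is the natural-irrationalities input $[K_1K_2:K_2]=[K_1:K_1\cap K_2]$ (equivalently the fact that $\Gal(K_1K_2/K_2)\to\Gal(K_1/K_1\cap K_2)$ is an isomorphism), and I expect that to be the main point to get right. As an alternative to the counting argument one can prove surjectivity directly: given $(\tau_1,\tau_2)\in G$, first lift $\tau_2$ to some $\sigma \in \Gal(K_1K_2/K)$ (possible since $K_2/K$ is Galois), observe that $\tau_1$ and $\sigma|_{K_1}$ agree on $K_1\cap K_2$ so that $\tau_1\circ(\sigma|_{K_1})^{-1}\in\Gal(K_1/K_1\cap K_2)$, lift this correction through $\Gal(K_1K_2/K_2)\xrightarrow{\sim}\Gal(K_1/K_1\cap K_2)$ to some $\tilde\rho$, and check that $\tilde\rho\circ\sigma$ maps to $(\tau_1,\tau_2)$; this uses the same natural-irrationalities isomorphism but avoids the degree computation.
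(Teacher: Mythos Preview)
Your proof is correct. The paper does not actually give a proof of this lemma: it is simply stated as a fact ``from basic Galois theory'' and then used in the proof of the subsequent proposition. Your argument (well-definedness and injectivity being formal, surjectivity via the degree formula $[K_1K_2:K]=[K_1:K][K_2:K]/[K_1\cap K_2:K]$ obtained from the natural-irrationalities isomorphism $\Gal(K_1K_2/K_2)\cong\Gal(K_1/K_1\cap K_2)$) is a standard and complete justification, and your alternative direct surjectivity argument is also fine.
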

\begin{proposition}\label{prop_congruence_to_seasoned}
Let $\pi$ be a regular algebraic, cuspidal automorphic representation of $\GL_2(\bA_\bQ)$ of weight 2 which is non-CM. Suppose  that $2, 3 \not\in sc(\pi)$. Then we  can find a  regular algebraic, cuspidal automorphic representation $\pi'$  of $\GL_2(\bA_\bQ)$ with the following properties:
\begin{enumerate}
\item There exist prime  numbers $(q, t, r)$ such that $\pi'$ is seasoned with respect to $(q, t, r)$ and $sc(\pi') = sc(\pi) \cup  \{ q \}$.
\item $\Sym^{n-1} \pi'$ exists if and only if $\Sym^{n-1} \pi$ does. 
\end{enumerate}
\end{proposition}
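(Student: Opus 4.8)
The plan is to produce $\pi'$ from $\pi$ by a single congruence modulo a large auxiliary prime $t$, raising the level of $\pi$ at a supercuspidal place $q$ (creating the good dihedral condition), at a Steinberg place $r$, and at a finite collection of further Steinberg places $s$ (arranging condition (5) of Definition~\ref{def_seasoned}). The point that makes $t$ the correct modulus is a compatibility phenomenon: the good dihedral ramification at $q$, where $\chi|_{I_{\bQ_q}}$ has order $t$, reduces mod $t$ to an \emph{unramified} representation of $G_{\bQ_q}$, because $\overline{\bF}_t^{\times}$ has no element of order $t$; likewise the Steinberg ramification at $r$ and the $s$ can be made residually unramified mod $t$ by imposing the usual level-raising congruences on $\overline{r}_{\pi,\iota_t}(\Frob_r)$ and $\overline{r}_{\pi,\iota_t}(\Frob_s)$. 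So a single $\pi'$ congruent to $\pi$ mod $t$ can carry all the new ramification, and — crucially for part (2) — the mod-$t$ representation is unchanged throughout.

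First I would fix a prime $t > \max(10,8n(n-1))$ with $t \notin sc(\pi)$, with $\pi_t$ unramified, and with $\overline{r}_{\pi,\iota}$ having image containing a conjugate of $\SL_2(\bF_{t^a})$ for every $\iota \colon \overline{\bQ}_t \to \bC$ and some $a \geq 1$; this holds for all but finitely many $t$ since $\pi$ is non-CM. Fix such an $\iota=\iota_t$. Then I would choose the auxiliary primes in order. By Dirichlet and Chebotarev I would pick $q$ with $q \equiv -1 \bmod t$, $(q+1)/t > 2$, $q \notin sc(\pi)\cup\{t\}$, $\pi_q$ unramified, and $\overline{r}_{\pi,\iota_t}(\Frob_q)$ of trace zero; a short computation with tame characters (using $t \nmid q-1$, so an order-$t$ tame character of $I_{\bQ_q}$ has niveau $2$) shows this is exactly the condition for $\overline{r}_{\pi,\iota_t}|_{G_{\bQ_q}}$ to be the reduction mod $t$ of $\Ind_{W_{\bQ_{q^2}}}^{W_{\bQ_q}}\widetilde{\chi}$ with $\widetilde{\chi}|_{I_{\bQ_q}}$ of order exactly $t$. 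Having fixed $q$, I would set $M$ to be the least common multiple of the prime-to-$q$ part of $M_\pi$ and $\prod_{p \in sc(\pi)}p^6$ (this is not circular, since all the new Steinberg ramification contributes trivially to the semisimple conductor away from $q$), list the finitely many irreducible dihedral $\overline{\rho}\colon G_\bQ \to \GL_2(\overline{\bF}_p)$ of prime-to-$p$ conductor dividing $Mq^2$ for $p \in sc(\pi)\cup\{q\}$, and finally choose $r$ (a primitive root modulo $q$, with $r \equiv 1 \bmod M$, level-raising-compatible mod $t$) and a prime $s=s(p,\overline{\rho})$ for each such pair (with $\overline{\rho}(\Frob_s)$ scalar, $s \not\equiv 1 \bmod p$, level-raising-compatible mod $t$), all by Chebotarev; Lemma~\ref{lem_Galois_group_of_compositum} is used here to see that the conditions imposed in the field cut out by $\overline{r}_{\pi,\iota_t}$, in the various cyclotomic fields, and in the field cut out by $\operatorname{Proj}\overline{\rho}$ are jointly realisable, using the bigness of the image of $\overline{r}_{\pi,\iota_t}$.

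With the primes in hand I would invoke level-raising for weight-$2$ cuspidal automorphic representations of $\GL_2(\bA_\bQ)$ — in the Steinberg case as in \cite{Kha09a}, and in the supercuspidal case via the theory of types and cohomology of quaternionic locally symmetric spaces exactly as in the proof of Lemma~\ref{lem_replacement_for_breuil_mezard_weak} (compare \cite{gee061}) — to obtain $\pi'$ of weight $2$ with $\overline{r}_{\pi',\iota_t} \cong \overline{r}_{\pi,\iota_t}$, with $\pi'_v \cong \pi_v$ away from $\{q,r\}\cup\{s(p,\overline{\rho})\}$, with $\rec_{\bQ_q}\pi'_q \cong \Ind_{W_{\bQ_{q^2}}}^{W_{\bQ_q}}\chi$ where $\chi|_{I_{\bQ_q}}$ has order $t$, and with $\pi'_r$ and each $\pi'_{s(p,\overline{\rho})}$ an unramified twist of the Steinberg representation. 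Then $sc(\pi') = sc(\pi)\cup\{q\}$ (so $2 \notin sc(\pi')$), $M_{\pi'} = q^2 M_\pi$, and conditions (1)--(5) of Definition~\ref{def_seasoned} hold by construction (the conductor bound $Mq^2$ in (5) being exactly what was arranged), so $\pi'$ is seasoned with respect to $(q,t,r)$; this gives (1). For (2): $\overline{r}_{\pi',\iota_t} \cong \overline{r}_{\pi,\iota_t}$ has image containing $\SL_2(\bF_{t^a})$ with $t$ much larger than $2n-1$, so $\Sym^{n-1}\overline{r}_{\pi,\iota_t}$ is irreducible and, $t$ being large, adequate; and $\Sym^{n-1}r_{\pi,\iota_t}$, $\Sym^{n-1}r_{\pi',\iota_t}$ are crystalline at $t$ (as $\pi_t$, hence $\pi'_t$, is unramified) with Hodge--Tate weights $\{0,\dots,n-1\}$ in the Fontaine--Laffaille range, hence potentially diagonalisable, with common semisimple reduction $\Sym^{n-1}\overline{r}_{\pi,\iota_t}$. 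So if $\Sym^{n-1}\pi$ exists then $\Sym^{n-1}\overline{r}_{\pi,\iota_t}$ is automorphic, and the automorphy lifting theorem of \cite{BLGGT} applied to the potentially diagonalisable $\Sym^{n-1}r_{\pi',\iota_t}$ shows $\Sym^{n-1}\pi'$ exists; the converse follows by the same argument with $\pi$ and $\pi'$ exchanged.

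The main obstacle I expect is the choice of auxiliary primes together with the verification that it genuinely makes $\pi'$ seasoned — in particular, that for \emph{every} $p \in sc(\pi)\cup\{q\}$ and \emph{every} irreducible dihedral $\overline{\rho}$ of prime-to-$p$ conductor dividing $Mq^2$ a suitable Steinberg prime $s$ exists. This is precisely where the hypothesis $2,3 \notin sc(\pi)$ enters: one must find $\sigma \in G_\bQ$ fixing the field cut out by $\operatorname{Proj}\overline{\rho}$ while acting nontrivially on $\bQ(\zeta_p)$, and since $\overline{\rho}$ is irreducible that field has non-cyclic dihedral Galois group over $\bQ$, whose cyclic quotients have order at most $2$, so $\Gal(\bQ(\zeta_p)/\bQ) \cong \bZ/(p-1)$ is a quotient only when $p \in \{2,3\}$. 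A secondary technical point is ensuring the level-raising inputs are available in exactly the required form — raising the level at $q$ to a prescribed supercuspidal type while leaving the rest of the level untouched — but this is supplied by the type-theoretic argument already developed for Lemma~\ref{lem_replacement_for_breuil_mezard_weak}.
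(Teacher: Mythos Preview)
Your proposal is correct and follows essentially the same route as the paper. Two refinements worth noting: the paper imposes $t \equiv 1 \pmod 4$ and $t > N(\pi)$, and in place of your vague ``level-raising-compatible mod $t$'' it asks concretely that $\overline{r}_{\pi,\iota_t}(\Frob_r)$ be scalar with $r \equiv 1 \pmod t$, and that $\Frob_{s}$ act as complex conjugation in the projective mod-$t$ image with $s \equiv -1 \pmod t$ --- the congruence $t \equiv 1 \pmod 4$ is precisely what makes these Frobenius conditions compatible across $E_1(\zeta_{p_i})$ and $E_2(\zeta_t)$ in the Chebotarev step (since then $c \in [G,G]$ and $c$ acts trivially on the quadratic subfield of $\bQ(\zeta_t)$), while $t > N(\pi)$ guarantees $t$ is unramified in the dihedral splitting fields $E_1$. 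Also, the cited result of \cite{gee061} yields only $r_{\pi,\iota_t}|_{G_{\bQ_l}} \sim r_{\pi',\iota_t}|_{G_{\bQ_l}}$ at the untouched primes, not $\pi'_l \cong \pi_l$; this weaker conclusion suffices.
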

\begin{proof}
Fix a prime $t > \max(10,8n(n-1), N(\pi))$  such that $t \equiv 1 \text{ mod }4$  and there exists an isomorphism $\iota : \overline{\bQ}_t \to \bC$ such that  $G = \operatorname{Proj} \overline{r}_{\pi, \iota}(G_\bQ)$ is conjugate either to $\PSL_2(\bF_t)$ or $\PGL_2(\bF_t)$. Since $t > 5$, the group $\PSL_2(\bF_t)$ is simple. The condition $t \equiv 1 \text{ mod }4$ implies that $-1 \text{ mod }t$ is a square and that the image of complex conjugation $c$ in $G$ lies in $[G, G]$. Using the Chebotarev density theorem, we can therefore choose a prime $q$  such that $q  \equiv -1 \text{ mod }t$, $(q+1) > 2t$, and the image of $\Frob_q$ in $G$ is in the  conjugacy class of complex conjugation.

Similarly, we can choose a prime $r$ such that $r$ is a primitive root modulo 
$q$, $\overline{r}_{\pi, \iota}(\Frob_r)$ is scalar, and $r$ splits in 
$\bQ(\zeta_{M_\pi t})$ and in $\bQ(\zeta_{p^{6}})$ for every $p \in sc(\pi)$.  Indeed, the prime $q$ is unramified in $\overline{\bQ}^{\ker \operatorname{Proj}\rbar_{\pi,\iota}}(\zeta_{M_\pi t}, \{ \zeta_{p^{6}} \}_{p \in sc(\pi)})$ but totally ramified in $\bQ(\zeta_q)$, so the intersection of these two fields in $\overline{\bQ}$ is $\bQ$. We choose $r$ so that it splits in the first field and is totally inert in the second.

Let $sc(\pi) = \{ p_1, \dots, p_k \}$ and let $p_{k+1} = q$. For each $i = 1, 
\dots, k+1$, let $\overline{\rho}_{i, j}$ ($j \in X_i$) be a set of 
representatives for the (finitely many) conjugacy classes of irreducible 
dihedral representations $\overline{\rho} : G_\bQ \to 
\GL_2(\overline{\bF}_{p_i})$ of prime-to-$p_i$ conductor dividing $\operatorname{lcm}(M_\pi q^2, \prod_{p \in sc(\pi)} p^6)$. For any 
$(i, j)$, the abelianization of the projective image of $\overline{\rho}_{i, 
j}$ is isomorphic either to $\bZ / 2 \bZ$ or $(\bZ / 2 \bZ)^2$. In either case 
 we claim that we can find a prime $s_{i, j}$ such that 
$\overline{\rho}_{i, j}(\Frob_{s_{i, j}})$ is scalar, $s_{i, j} \not\equiv 1 
\text{ mod }p_i$, the image of $\Frob_{s_{i, j}}$ in $G$ is in the conjugacy 
class of complex conjugation, and $s_{i, j} \equiv -1 \text{ mod }t$. 

To see this, let $E_1 =  \overline{\bQ}^{\ker  \operatorname{Proj} \overline{\rho}_{i,  j}}$ and $E_2 = \overline{\bQ}^{\ker \operatorname{Proj} \overline{r}_{\pi, \iota}}$.  We want to show there is $\sigma \in \Gal(E_1 E_2(\zeta_{p_i}, \zeta_t) / \bQ)$ such that $\sigma|_{E_1} = 1$,  $\sigma|_{\bQ(\zeta_{p_i})} \neq 1$ and $\sigma|_{E_2(\zeta_t)} = c$. First we find $\tau \in \Gal(E_1 (\zeta_{p_i}) / \bQ)$ such that $\tau|_{E_1} = 1$ and $\tau|_{\bQ(\zeta_{p_i})} \neq 1$. $\Gal(E_1 / \bQ)$ is soluble and its maximal abelian quotient is a quotient of $(\bZ / 2 \bZ)^2$, so $\Gal(E_1 \cap \bQ(\zeta_{p_i}) / \bQ)$ is a quotient of $(\bZ / 2 \bZ)^2$. This shows that $E_1 \cap \bQ(\zeta_{p_i})$ is either trivial or quadratic. Since $(\bF_{p_i}^\times)^2$  contains 
non-identity elements (because $p_i \geq 5$, because $p_i \in sc(\pi)$) we can find a $\tau$ with the desired property using Lemma \ref{lem_Galois_group_of_compositum}. We can assume that $\tau$ acts trivially on the maximal abelian subextension of $E_1(\zeta_{p_i})$ of exponent 2.

Using Lemma \ref{lem_Galois_group_of_compositum} again, we're done if we can show that $\tau|_{E_1(\zeta_{p_i}) \cap E_2(\zeta_t)} = c|_{E_1(\zeta_{p_i}) \cap E_2(\zeta_t)}$. Let $E_2^{ab}$ denote the maximal abelian subfield of $E_2$. It has degree 1 or 2 over $\bQ$ (because of the form of the image of $\overline{r}_{\pi, \iota}$) and $\Gal(E_2 / E_2^{ab})$ is a non-abelian simple group. Thus the maximal soluble quotient of $\Gal(E_2(\zeta_t) / \bQ)$ is $\Gal(E_2^{ab}(\zeta_t) / \bQ)$, which is in fact abelian. Since $\Gal(E_1(\zeta_{p_i}) / \bQ)$ is soluble, this shows that $\Gal(E_1(\zeta_{p_i}) \cap E_2(\zeta_t) / \bQ)$ is abelian. Since $t$ is coprime to $qN(\pi)$, the prime $t$ is unramified in $E_1(\zeta_{p_i})$, while the quotient of $[E^{ab}_2(\zeta_t) : \bQ]$ by the ramification index of $t$ is 1 or 2. We conclude that $E_1(\zeta_{p_i}) \cap E_2(\zeta_t)$ is either trivial or quadratic. In particular, $\tau$ acts trivially on it. The element $c$ also acts trivially on it, since it acts trivially on $E_2^{ab}$ and also on the quadratic subfield of $\bQ(\zeta_t)$ (since $t \equiv 1 \text{ mod }4$). This completes the proof of the claim.

To conclude the proof of the proposition, we apply \cite[Corollary 3.1.7]{gee061}; it implies the 
existence of a regular algebraic, cuspidal automorphic representation $\pi'$ of 
$\GL_2(\bA_\bQ)$  of weight 2 such that $\overline{r}_{\pi', \iota} \cong 
\overline{r}_{\pi, \iota}$, such that  for each $s \in \{ r, s_{i, j} \}$, 
$\pi'_s$ is an unramified twist of the Steinberg representation, such that 
$\rec_{\bQ_q}  \pi'_q \cong \Ind_{W_{\bQ_{q^2}}}^{W_{\bQ_q}} \chi$   for a 
character $\chi : W_{\bQ_{q^2}} \to \bC^\times$  such that $\chi|_{I_{\bQ_q}}$ 
has order $t$, and such that for every other prime $p$,  we have $r_{\pi,  
\iota}|_{G_{\bQ_p}} \sim r_{\pi', \iota}|_{G_{\bQ_p}}$, with notation  as in 
\cite[\S 1]{BLGGT}. (The hypothesis `$(ord)$' of \cite[Proposition 3.1.5]{gee061} is automatic in our situation.)  In particular, we have $M_{\pi'} = q^2 M_\pi$. We see 
that $\pi'$ is seasoned with respect to $(q, t, r)$. To see that $\Sym^{n-1}  
\pi$ exists if and only if $\Sym^{n-1} \pi'$ does, apply e.g.\ \cite[Theorem 
4.2.1]{BLGGT}. The potential diagonalizability assumption is satisfied because $r_{\pi, \iota}|_{G_{\bQ_t}}, r_{\pi', \iota}|_{G_{\bQ_t}}$ are both Fontaine--Laffaille, while the representations $\Sym^{n-1} \overline{r}_{\pi, \iota} \cong \Sym^{n-1} \overline{r}_{\pi', \iota}$ are irreducible because the $m^\text{th}$ symmetric power of the standard representation of $\SL_2(\bF_t)$ is irreducible whenever $t > m$. 
\end{proof}
\begin{proposition}\label{prop_seasoned_reduce_the_level}
Let $\pi$ be a regular algebraic, cuspidal automorphic representation of  $\GL_2(\bA_\bQ)$ of weight $2$. Suppose that $\pi$ is seasoned with respect to $(q, t, r)$, and let $p \in sc(\pi)$ satisfy $p \geq 5$. Then we can find a regular algebraic, cuspidal automorphic representation $\pi'$ of $\GL_2(\bA_\bQ)$ with the following properties:
\begin{enumerate}
\item $\pi'$ has weight 2 and is non-CM.
\item $sc(\pi') = sc(\pi) - \{ p \}$. If $p \neq q$, then $\pi'$ is seasoned with respect to $(q, t, r)$.
\item If $\Sym^{n-1} \pi'$ exists, then so does $\Sym^{n-1} \pi$.
\end{enumerate}
\end{proposition}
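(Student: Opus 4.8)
The plan is to reduce this to the automorphy lifting theorem~\ref{thm_automorphy_of_symmetric_power}, using Proposition~\ref{prop_seasoned_large_image} to arrange large image and Lemma~\ref{lem_replacement_for_breuil_mezard} to remove the supercuspidal ramification at $p$. First I would invoke Proposition~\ref{prop_seasoned_large_image}, which (since $\pi$ is seasoned and $p \in sc(\pi)$) supplies an isomorphism $\iota : \overline{\bQ}_p \to \bC$ with $\overline{r}_{\pi, \iota}(G_\bQ)$ containing a conjugate of $\SL_2(\bF_{p^a})$ for some $p^a > 2n-1$; as $n \geq 3$ this gives $p^a > 5$, and the classification of finite subgroups of $\PGL_2(\overline{\bF}_p)$ lets one enlarge $a$ so that $\PSL_2(\bF_{p^a}) \subset \operatorname{Proj} \overline{r}_{\pi, \iota}(G_\bQ) \subset \PGL_2(\bF_{p^a})$ with $p^a > \max(5, 2n-1)$. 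In particular $\pi$ is non-CM and satisfies the standing hypotheses preceding Theorem~\ref{thm_automorphy_of_symmetric_power} (note that $\pi_p$ supercuspidal makes $r_{\pi, \iota}|_{G_{\bQ_p}}$ irreducible, hence not ordinary). Then, since $p \geq 5 > 3$, I would apply Lemma~\ref{lem_replacement_for_breuil_mezard} to obtain a regular algebraic, cuspidal automorphic representation $\pi'$ of $\GL_2(\bA_\bQ)$ of weight $2$, not $\iota$-ordinary, with $\overline{r}_{\pi', \iota} \cong \overline{r}_{\pi, \iota}$, such that $\rec_{\bQ_p} \pi'_p$ is a sum of two characters of conductor dividing $p^3$ --- so $N(\pi'_p) \mid p^6$ and $\pi'_p$ is not supercuspidal --- and $r_{\pi, \iota}|_{G_{\bQ_l}} \sim r_{\pi', \iota}|_{G_{\bQ_l}}$ for every prime $l \neq p$. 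This $\pi'$ is the claimed representation.

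Part~(1) is then clear: $\pi'$ has weight $2$ and is non-CM, the latter because $\overline{r}_{\pi', \iota} \cong \overline{r}_{\pi, \iota}$ has image containing $\SL_2(\bF_{p^a})$. For part~(2) I would use that $\sim$ at a prime $l \neq p$ preserves the relevant local invariants at $l$: the conductor, supercuspidality, the property of being a (twist of the) Steinberg representation, and the inertial type. Together with $N(\pi'_p) \mid p^6$ and the non-supercuspidality of $\pi'_p$, this gives $sc(\pi') = sc(\pi) - \{p\}$, and --- when $p \neq q$ --- reduces the five conditions of Definition~\ref{def_seasoned} for $\pi'$ to the corresponding conditions for $\pi$ (at $q$ one uses $q \neq p$). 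The only point needing care is that the modulus $M'$ attached to $\pi'$ in condition~(3) divides the modulus $M$ attached to $\pi$: this holds because $sc(\pi') - \{q\} \subseteq sc(\pi) - \{q\}$, because the prime-to-$q$ part of $M_{\pi'}$ agrees with that of $M_\pi$ away from $p$, and because the $p$-part of $M_{\pi'}$ divides $p^6$, which already divides $M$ (as $p \in sc(\pi) - \{q\}$); hence $r \equiv 1 \bmod M'$, $r$ is still a primitive root modulo $q$, and the auxiliary Steinberg primes $r$ and $s$ used for $\pi$ are $\neq p$ (since $\pi_p$ is supercuspidal) and continue to work for $\pi'$. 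For condition~(5) one also notes $M'q^2 \mid Mq^2$, so the set of dihedral representations to be tested for $\pi'$ is contained in that for $\pi$.

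For part~(3), assume $\Sym^{n-1} \pi'$ exists, so $\Sym^{n-1} r_{\pi', \iota}$ is automorphic. I would then apply Theorem~\ref{thm_automorphy_of_symmetric_power} over $F = \bQ$ to the pair $(\pi, \pi')$: the standing hypotheses on $\pi$ were arranged in the first step, and conditions~(1)--(5) on $\pi'$ hold because $\pi'$ has weight $2$ and is non-CM, $\pi'$ is not $\iota$-ordinary, $\overline{r}_{\pi', \iota} \cong \overline{r}_{\pi, \iota}$, the relation $r_{\pi, \iota}|_{G_{\bQ_l}} \sim r_{\pi', \iota}|_{G_{\bQ_l}}$ ($l \neq p$) preserves the property of being a twist of the Steinberg representation, and $\Sym^{n-1} r_{\pi', \iota}$ is automorphic. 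The conclusion of the theorem is that $\Sym^{n-1} r_{\pi, \iota}$ is automorphic, i.e.\ that $\Sym^{n-1} \pi$ exists.

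The conceptual input is wholly contained in Proposition~\ref{prop_seasoned_large_image}, Lemma~\ref{lem_replacement_for_breuil_mezard} and Theorem~\ref{thm_automorphy_of_symmetric_power}. The main obstacle here is the (essentially bookkeeping) verification in part~(2) that each numerical and local condition in the definition of ``seasoned'' survives the replacement of $\pi$ by $\pi'$; this hinges on the explicit conductor bound $N(\pi'_p) \mid p^6$ at the bad prime and on the fact that $\sim$ --- via ``strongly connecting'' in the sense of \cite{BLGGT}, once purity is invoked --- controls the local invariants at all primes $l \neq p$ that enter Definition~\ref{def_seasoned}.
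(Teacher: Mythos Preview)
Your proposal is correct and follows essentially the same approach as the paper: invoke Proposition~\ref{prop_seasoned_large_image} to get large residual image at $p$, apply Lemma~\ref{lem_replacement_for_breuil_mezard} to produce $\pi'$, and then deduce part~(3) from Theorem~\ref{thm_automorphy_of_symmetric_power}. The paper's own proof is terser, merely asserting that the conductor bound $N(\pi'_p)\mid p^6$ together with $r_{\pi,\iota}|_{G_{\bQ_l}}\sim r_{\pi',\iota}|_{G_{\bQ_l}}$ for $l\neq p$ suffices to preserve seasonedness (specifically conditions~(3) and~(5) of Definition~\ref{def_seasoned}); your careful bookkeeping of $M'\mid M$ and the persistence of the auxiliary Steinberg primes spells out exactly what the paper leaves implicit.
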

\begin{proof}
By Theorem  \ref{thm_automorphy_of_symmetric_power}, it's enough to find an  isomorphism $\iota : \overline{\bQ}_p \to \bC$ and a regular algebraic, cuspidal automorphic representation $\pi'$ of $\GL_2(\bA_\bQ)$ with the following  properties:
\begin{itemize}
\item The image of $\overline{r}_{\pi, \iota}$ contains a conjugate of $\SL_2(\bF_{p^a})$ for some $p^a > 2n-1$.
\item $\pi'$ has weight  2  and is non-CM. 
\item $\overline{r}_{\pi', \iota} \cong \overline{r}_{\pi, \iota}$.
\item $sc(\pi') = sc(\pi) - \{ p \}$.
\item For  any prime $l \neq p$, $r_{\pi', \iota}|_{G_{\bQ_l}} \sim r_{\pi, \iota}|_{G_{\bQ_l}}$.
\item $\pi'$ is not $\iota$-ordinary.
\item If $p \neq q$ then the conductor of $\pi'_p$ divides $p^{6}$.
\end{itemize}
If $p \neq q$, then the last condition ensures that $\pi'$ is still seasoned 
with respect to $(q, t, r)$ (more precisely, that conditions (3) and (5) in Definition \ref{def_seasoned} still hold). We choose $\iota$ satisfying the first condition using Proposition \ref{prop_seasoned_large_image}; then the existence of a $\pi'$ satisfying the above requirements is the content of Lemma \ref{lem_replacement_for_breuil_mezard}.
\end{proof}
\begin{proposition}\label{prop_sym_powers_weight_2_2_3}
Let $\pi$ be a regular algebraic, cuspidal automorphic representation of $\GL_2(\bA_\bQ)$. Suppose that $\pi$ is of weight $2$ and non-CM, and suppose that $2, 3 \not\in sc(\pi)$. Then $\Sym^{n-1} \pi$ exists.
\end{proposition}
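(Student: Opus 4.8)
The plan is to deduce Proposition \ref{prop_sym_powers_weight_2_2_3} by a bookkeeping induction that combines Proposition \ref{prop_congruence_to_seasoned} with repeated applications of Proposition \ref{prop_seasoned_reduce_the_level}, bottoming out at the main theorem of \cite{New19b}, which is exactly the case $sc(\pi) = \emptyset$.

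First I would dispose of the case $sc(\pi) = \emptyset$ by citing \cite{New19b} directly. Assuming $sc(\pi) \neq \emptyset$, I would apply Proposition \ref{prop_congruence_to_seasoned} --- whose hypotheses ($\pi$ of weight $2$, non-CM, and $2, 3 \notin sc(\pi)$) are precisely our standing assumptions --- to obtain primes $(q, t, r)$ and a regular algebraic, cuspidal automorphic representation $\pi_0$ that is seasoned with respect to $(q, t, r)$, with $sc(\pi_0) = sc(\pi) \cup \{ q \}$, and such that $\Sym^{n-1} \pi$ exists if and only if $\Sym^{n-1} \pi_0$ does. It then suffices to prove that $\Sym^{n-1} \pi_0$ exists. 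At this point I would record that every prime in $sc(\pi_0)$ is $\geq 5$: the primes of $sc(\pi) \subset sc(\pi_0)$ are, since $2, 3 \notin sc(\pi)$, and $q \geq 5$ because the seasoning conditions force $q + 1 > 2t$ with $t > 10$.

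Next I would strip the supercuspidal primes off $\pi_0$ one at a time, keeping the seasoning triple $(q,t,r)$ fixed and removing $q$ only at the very end. Writing $sc(\pi_0) \setminus \{ q \} = \{ p_1, \dots, p_k \}$, I would apply Proposition \ref{prop_seasoned_reduce_the_level} successively at $p_1, \dots, p_k$: each $p_i$ is $\geq 5$ and distinct from $q$, so the output representation remains seasoned with respect to the \emph{same} triple $(q, t, r)$, remains of weight $2$ and non-CM, and has one fewer supercuspidal prime; moreover the existence of $\Sym^{n-1}$ of the output implies it for the input. After $k$ steps I reach $\pi_k$, seasoned with respect to $(q, t, r)$, with $sc(\pi_k) = \{ q \}$. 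One further application of Proposition \ref{prop_seasoned_reduce_the_level}, now at $p = q$ (still $\geq 5$), yields $\pi_{k+1}$ of weight $2$, non-CM, with $sc(\pi_{k+1}) = \emptyset$, such that the existence of $\Sym^{n-1} \pi_{k+1}$ implies that of $\Sym^{n-1} \pi_k$. Since $sc(\pi_{k+1})$ is empty, $\Sym^{n-1} \pi_{k+1}$ exists by \cite{New19b}, and chaining the implications back up the tower gives the existence of $\Sym^{n-1} \pi_0$, hence of $\Sym^{n-1} \pi$.

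There is no genuine obstacle internal to this proposition: all of the hard work is packaged inside Theorem \ref{thm_automorphy_of_symmetric_power}, Proposition \ref{prop_congruence_to_seasoned}, and Proposition \ref{prop_seasoned_reduce_the_level}. The one point demanding care is verifying that the hypotheses of each invocation genuinely persist along the tower --- in particular that the ``seasoned with respect to $(q,t,r)$'' structure survives the removal of any prime $p \neq q$ (so that the $p_i$ may be removed in any order while $q$ and its companion primes are retained until the last step), and that no supercuspidal prime ever drops to $2$ or $3$. Both follow from the numbered conclusions of Proposition \ref{prop_seasoned_reduce_the_level} together with the observation above that every prime in $sc(\pi_0)$ is $\geq 5$.
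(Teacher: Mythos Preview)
Your proof is correct and uses exactly the same ingredients as the paper's proof: Proposition \ref{prop_congruence_to_seasoned}, Proposition \ref{prop_seasoned_reduce_the_level}, and the base case from \cite{New19b}. The only difference is organizational: the paper sets up an interleaved induction on two hypotheses $(H_k)$ and $(H'_k)$ (the latter restricted to seasoned representations), whereas you season once and then descend directly to $sc = \emptyset$ in a single chain, removing $q$ last so that the seasoning persists along the way. Your arrangement is in fact slightly more economical, since the paper's induction, when unwound, re-seasons at each level; but the two arguments are really the same.
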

\begin{proof}
If $k \geq 0$,  let $(H_k)$ denote the hypothesis that the conclusion of the  proposition  holds when $|sc(\pi)| \leq k$, and   let  $(H'_k)$ denote the hypothesis that the conclusion of the proposition holds when  $|sc(\pi)| \leq k$ and $\pi$ is seasoned with respect to some tuple $(q, t, r)$. As remarked above, $(H_0)$ follows from the results of  \cite{New19b}. It therefore suffices to prove the implications $(H_k) \Rightarrow (H'_{k+1})$ and $(H'_k) \Rightarrow (H_k)$.

The  first implication follows immediately from Proposition \ref{prop_seasoned_reduce_the_level}. For the second, assume that $(H'_k)$ holds and let $\pi$ be a regular algebraic, cuspidal automorphic representation of $\GL_2(\bA_\bQ)$ which is of weight 2 and non-CM, and such that $|sc(\pi)| = k \geq 1$. By Proposition \ref{prop_congruence_to_seasoned}, we can find a regular algebraic, cuspidal automorphic representation $\pi'$ of $\GL_2(\bA_\bQ)$ which is seasoned with respect to $(q, t, r)$, such that $sc(\pi') = sc(\pi) \cup \{ q \}$,  and such that the existence of $\Sym^{n-1} \pi$ is equivalent to the existence of $\Sym^{n-1} \pi'$.

Now choose a prime $p \in sc(\pi)$ (so $p \in sc(\pi')$ and $p \neq q$). Applying Proposition \ref{prop_seasoned_reduce_the_level} with this choice of $p$ gives another regular algebraic, cuspidal automorphic representation $\pi''$ of $\GL_2(\bA_\bQ)$ which is seasoned with respect to $(q, t, r)$, such that $|sc(\pi'')| = k$, and such that the existence of $\Sym^{n-1} \pi''$ implies that of $\Sym^{n-1} \pi'$. The existence of $\Sym^{n-1} \pi''$ follows from $(H'_k)$, so we're done.
\end{proof}
We can now give the proof of Theorem \ref{thm_sym_powers}.
\begin{proof}[Proof of  Theorem \ref{thm_sym_powers}]
Let $\pi$ be a non-CM, regular algebraic, cuspidal automorphic representation of $\GL_2(\bA_\bQ)$. We must show that $\Sym^{n-1} \pi$ exists. We first do this under the additional assumption that $\pi$ has weight 2 and that $2 \not\in sc(\pi)$. We can assume that $\pi_3$ is supercuspidal. Fix a prime $t > \max(5, 4n(n-1), N(\pi))$ such that $t \equiv 1 \text{ mod }4$ and there exists an isomorphism $\iota_t : \overline{\bQ}_t \to \bC$ such that $G = \operatorname{Proj} \overline{r}_{\pi, \iota_t}(G_\bQ)$ is conjugate either to $\PSL_2(\bF_t)$ or $\PGL_2(\bF_t)$. Using the Chebotarev density theorem, we can find a prime $q$ satisfying the following conditions:
\begin{itemize}
\item The prime $q$  satisfies $q \equiv -1 \text{ mod }t$, $q \equiv 1 \text{ mod }8$, and $q \equiv 1 \text{ mod }l$ for every prime $l < t$. 
\item The image of $\Frob_q$ in $G$ is in the conjugacy class of complex conjugation.
\end{itemize}
(Compare \cite[Lemma 8.2]{Kha09}.) By \cite[Corollary 3.1.7]{gee061}, we can find another regular algebraic, cuspidal automorphic representation $\pi'$ of weight 2 satisfying the following conditions:
\begin{itemize}
\item  $\overline{r}_{\pi, \iota_t} \cong \overline{r}_{\pi', \iota_t}$.
\item If $l \neq q$ is a prime, then $r_{\pi, \iota_t}|_{G_{\bQ_l}} \sim r_{\pi', \iota_t}|_{G_{\bQ_l}}$.
\item There is an isomorphism $\rec_{\bQ_q} \pi'_q \cong \Ind_{W_{\bQ_q^2}}^{W_{\bQ_q}} \chi$, where $\chi : W_{\bQ_{q^2}} \to \bC^\times$ is a character such that $\chi|_{I_{\bQ_q}}$ has order $t$.
\end{itemize}
Applying \cite[Theorem 4.2.1]{BLGGT} to $\Sym^{n-1} r_{\pi, \iota_t}$, we see that $\Sym^{n-1} \pi$ exists if and only if $\Sym^{n-1} \pi'$ does. (The potential diagonalizability assumption is satisfied because $r_{\pi, \iota_t}|_{G_{\bQ_t}}, r_{\pi', \iota_t}|_{G_{\bQ_t}}$ are both Fontaine--Laffaille, while the representations $\Sym^{n-1} \overline{r}_{\pi, \iota_t} \cong \Sym^{n-1} \overline{r}_{\pi', \iota_t}$ are irreducible because the $m^\text{th}$ symmetric power of the standard representation of $\SL_2(\bF_t)$ is irreducible whenever $t > m$. )

Let $\iota : \overline{\bQ}_3 \to \bC$ be an isomorphism. Then (\cite[Lemma 6.3]{Kha09}) there exists $a \geq 2$ such that the image of $\operatorname{Proj}\overline{r}_{\pi, \iota}$ is conjugate to $\PSL_2(\bF_{3^a})$ or $\PGL_2(\bF_{3^a})$. In fact, we must have $3^a > 2n-1$: otherwise $t \leq 3^{2a} - 1 \leq 4n(n-1)$, a contradiction to our assumption $t > 4n(n-1)$.

Applying Lemma \ref{lem_replacement_for_breuil_mezard_weak}, we can find another regular algebraic, cuspidal automorphic representation  $\pi''$ of $\GL_2(\bA_\bQ)$ of weight 2 such  that $\overline{r}_{\pi', \iota} \cong \overline{r}_{\pi'', \iota}$,  $2, 3 \not\in sc(\pi'')$,  $r_{\pi'', \iota}|_{G_{\bQ_3}}$ is potentially Barsotti--Tate and non-ordinary, and for each prime $l \neq 3$, $\pi'_l$ is a twist of the Steinberg representation if and only if $\pi''_l$ is. Then Proposition \ref{prop_sym_powers_weight_2_2_3} implies that $\Sym^{n-1} \pi''$ exists. We can then invoke Theorem \ref{thm_automorphy_of_symmetric_power} to see that $\Sym^{n-1} \pi'$ exists and hence $\Sym^{n-1} \pi$ exists.

The next case to treat is when $\pi$ has weight 2 but now $2 \in sc(\pi)$. In this case we can repeat the same argument with $3$ replaced by $2$ to conclude the existence of $\Sym^{n-1} \pi$.

Finally we treat the general case where $\pi$  has weight $k$ for some $k > 2$ and we make no assumption on $sc(\pi)$. In this case  we can 
find a prime  $t > \max(5, k(n+1))$ such that $\pi_t$ is unramified and an 
isomorphism $\iota  : \overline{\bQ}_t \to \bC$ such that the image of 
$\overline{r}_{\pi, \iota}$ contains a conjugate of $\SL_2(\bF_t)$. Applying Lemma \ref{lem_replacement_for_breuil_mezard_weak} again, we can find another 
regular algebraic, cuspidal automorphic representation $\pi'$ of 
$\GL_2(\bA_\bQ)$ of weight 2 such that $\overline{r}_{\pi', \iota} \cong 
\overline{r}_{\pi, \iota}$ and $r_{\pi', \iota}$ is potentially Barsotti--Tate. 
Then $\Sym^{n-1} \pi'$ exists and $\Sym^{n-1} \overline{r}_{\pi, \iota}$ is 
irreducible. We can now apply 
\cite[Theorem 4.2.1]{BLGGT} to conclude that $\Sym^{n-1} r_{\pi, \iota}$ is 
automorphic and therefore that $\Sym^{n-1} \pi$ exists. Note that $\Sym^{n-1} {r}_{\pi', \iota}$ is the symmetric power of a $2$-dimensional potentially 
diagonalizable representation (\cite[Lemma 4.4.1]{geekisin}), and hence potentially diagonalizable, cf.~the remark following \cite[Definition 3.3.5]{blgg}.
\end{proof}
\appendix
\section{The case of weight one forms}\label{wt1}
In this short appendix we record the automorphy of the symmetric power lifting for cuspidal Hecke eigenforms of weight $1$, or with CM. The most difficult case is due to Kim \cite[Theorem 6.4]{Kim2004}.
\begin{theorem}
Let $n \ge 1$. Let $\pi$ be a cuspidal automorphic representation of $\GL_2(\A_\Q)$ with $\pi_{\infty}$ holomorphic limit of discrete series, or with $\pi$ the automorphic induction of a Hecke character for a quadratic field. Then $\Sym^{n}\pi$ exists.
\end{theorem}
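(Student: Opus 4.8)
The plan is to split into two cases and, in each, to exhibit $\Sym^n\pi$ explicitly as an isobaric sum of automorphic representations whose existence is already known, using that all of the functorial constructions invoked below are compatible with the local Langlands correspondence at every place of $\bQ$ (so that the resulting isobaric sum really does carry the parameter $\Sym^n$ of the parameter of $\pi$).

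Suppose first that $\pi$ is the automorphic induction of a Hecke character $\chi$ of a quadratic field $K/\bQ$, and let $\sigma$ generate $\Gal(K/\bQ)$. Fixing $\iota:\overline{\bQ}_p\to\bC$ and writing $\psi=r_{\chi,\iota}$, we have $r_{\pi,\iota}\cong\Ind_{G_K}^{G_\bQ}\psi$, and restriction to $G_K$ gives $\Sym^n r_{\pi,\iota}|_{G_K}\cong\bigoplus_{i=0}^n\psi^i(\psi^\sigma)^{n-i}$, on which $G_\bQ/G_K$ acts by $i\mapsto n-i$. Reading off the $\sigma$-orbits, I would record that for $n=2m+1$ odd one has $\Sym^n r_{\pi,\iota}\cong\bigoplus_{i=0}^m\Ind_{G_K}^{G_\bQ}(\psi^i(\psi^\sigma)^{n-i})$, while for $n=2m$ even one has $\Sym^n r_{\pi,\iota}\cong\mu\oplus\bigoplus_{i=0}^{m-1}\Ind_{G_K}^{G_\bQ}(\psi^i(\psi^\sigma)^{n-i})$, where $\mu$ is the (de Rham) character of $G_\bQ$ extending $(\psi\psi^\sigma)^m$ that actually occurs. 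Each character here corresponds to a Hecke character of $K$ or of $\bQ$; its automorphic induction to $\GL_2(\bA_\bQ)$ exists classically (for the quadratic extension $K/\bQ$, as a theta series, degenerating to an isobaric sum of two Hecke characters of $\bQ$ when the character is $\sigma$-invariant), and $\Sym^n\pi$ is the resulting isobaric automorphic representation of $\GL_{n+1}(\bA_\bQ)$.

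In the second case $\pi_\infty$ is a holomorphic limit of discrete series, so by Deligne--Serre $\pi$ arises from a weight one cuspidal eigenform, with associated Artin representation $\rho:G_\bQ\to\GL_2(\bC)$ of finite image $G$. If the projective image of $\rho$ is dihedral then $\pi$ is an automorphic induction and we are back in the first case, so I would assume the projective image is $A_4$, $S_4$ or $A_5$, so that, up to scalars, $G$ is a binary tetrahedral, octahedral or icosahedral group. Since $\Sym^n\rho$ factors through $G$, it is a sum of irreducible representations of $G$, and it suffices to prove that each such representation, viewed as an Artin representation of $G_\bQ$, is automorphic in the strong sense; $\Sym^n\pi$ is then the corresponding isobaric sum. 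When the projective image is $A_4$ or $S_4$, every irreducible representation of $G$ has dimension at most $4$ and is, up to a finite order twist, one of $1$, a $2$-dimensional representation with solvable projective image, $\Sym^2\rho$, or $\Sym^3\rho$; automorphy follows from class field theory, the theorem of Langlands--Tunnell, Gelbart--Jacquet, and Kim--Shahidi ($\Sym^3$), respectively.

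The delicate case is $A_5$, where (up to scalars) $G=\SL_2(\bF_5)$, whose irreducible representations have dimensions $1,2,2,3,3,4,4,5,6$. Writing $\rho^\tau$ for the companion Artin representation attached to the Galois-conjugate weight one eigenform $\pi^\tau$ (so $\rho^\tau$ is again automorphic and is the other faithful $2$-dimensional representation of $G$), I would check that the irreducibles of dimension $\le 5$ are, up to a finite order twist, among $1$, $\rho$, $\rho^\tau$, $\Sym^2\rho$, $\Sym^2\rho^\tau$, $\Sym^3\rho$, $\rho\otimes\rho^\tau$ and $\Sym^4\rho$ --- automorphic by, in turn, class field theory, the automorphy of $\pi$ and $\pi^\tau$, Gelbart--Jacquet, Kim--Shahidi ($\Sym^3$ and $\Sym^4$), and Ramakrishnan's $\GL_2\times\GL_2\to\GL_4$ functoriality. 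The remaining $6$-dimensional faithful representation is the crux: following \cite[Theorem 6.4]{Kim2004}, one realises it (up to a finite order twist) inside a tensor product of lower-dimensional representations already known to be automorphic --- e.g.\ $\rho\otimes\Sym^2\rho^\tau$, an irreducibility one verifies by a character computation --- and deduces its automorphy from the Kim--Shahidi tensor product $\GL_2\times\GL_3\to\GL_6$. Assembling the isobaric sum then proves the theorem. The hard part is precisely this icosahedral bookkeeping: deciding which irreducible representation of $\SL_2(\bF_5)$ each symmetric power or low tensor product realises, and in particular pinning the $6$-dimensional one inside a functorial operation that is actually available.
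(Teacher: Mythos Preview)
Your proposal is correct and follows essentially the same route as the paper: reduce to the finite projective image via Deligne--Serre, handle the dihedral/automorphic-induction case by decomposing into characters and their inductions, and in the $A_4$, $S_4$, $A_5$ cases list the irreducible representations of the relevant binary polyhedral group and match each to an available functorial construction, with the $A_5$ $6$-dimensional piece handled via $\GL_2\times\GL_3\to\GL_6$ exactly as in \cite[Theorem~6.4]{Kim2004}. One small comment: in the automorphic-induction case there is no need to pass through $p$-adic Galois representations $r_{\pi,\iota}$ (and $\chi$ need not be algebraic), so it is cleaner to carry out the Clebsch--Gordan decomposition directly on the level of $L$-parameters or Hecke characters, as the paper does.
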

Note that in these cases $\Sym^{n}\pi$ is usually not cuspidal.
\begin{proof}
First we assume that $\pi_\infty$ is holomorphic limit of discrete series. Twisting by an algebraic Hecke character, we can assume that $\pi$ is generated by a holomorphic weight $1$ cuspidal Hecke eigenform. In particular, Deligne and Serre \cite{deligne-serre} constructed a continuous odd irreducible representation 
$r_\pi: G_\Q\to \GL_2(\C)$ with $r_\pi|_{W_{\Qp}} \cong \rec^T_{\Qp}(\pi_p)$ for all primes $p$. The projective image of $r_\pi$ is a finite subgroup of $\PGL_2(\C)$, and is therefore dihedral or isomorphic to a copy of $A_4, S_4$ or $A_5$ (moreover, each of these subgroups is unique up to conjugacy).  We can then establish the automorphy of $\Sym^{n}r_\pi$ case by case, depending on the projective image. In the dihedral case, $r_\pi$ is induced from a character $\psi$ of $G_K$ for $K/\Q$ quadratic, $\Sym^{n}r_\pi$ decomposes as a direct sum of characters and the inductions of characters from $K$ to $\Q$, and therefore $\Sym^{n}r_\pi$ is automorphic. 

In the other cases, we denote the inverse image of $\operatorname{Proj} (r_\pi)(G_{\Q})$ in $\SL_2(\C)$ by $\Gamma^1$. It is a binary polyhedral group. The image $r_{\pi}(G_{\Q})$ is a subgroup of $\mu_{2k}\Gamma^1 \subset \GL_2(\C)$ for some $k$ ($\mu_{2k}$ is the cyclic subgroup of the scalar matrices with order $2k$). We have $\mu_{2k}\Gamma^1 \cong \mu_{2k}\times\Gamma^1/\langle (-1,-I) \rangle$, so its irreducible representations are of the form $\psi\times \sigma$, with $\psi$ a character of $\mu_{2k}$, $\sigma$ an irreducible representation of $\Gamma^1$, and $\psi(-1) = \sigma(-I)$. Twisting by a Dirichlet character, we can assume that $r_\pi(G_\Q) = \mu_{2k}\Gamma^1$ (choose a prime $p$ where $\pi$ is unramified and which is $1$ mod $2k$, then twist by a Dirichlet character with conductor $p$ and order $2k$). Now to understand the decomposition of $\Sym^{n}r_{\pi}$ into irreducibles, it suffices to understand the decomposition of the representation $\Sym^{n}\C^2$ of $\Gamma^1$. See, for example, \cite[Appendix A]{MR2388772} for the character tables of the binary polyhedral groups, or use \cite{GAP4}.

For the $A_5$ case, the irreducible representations of $\Gamma^1$ and their relationship to (symmetric powers of) the two Galois-conjugate irreducible two-dimensional representations are described in \cite[\S5]{Kim2004}.  This allows automorphy of $\Sym^{n}r_\pi$ to be deduced from the automorphy of $\Sym^m$ for $m \le 4$, together with tensor product functorialities $\GL_2 \times \GL_2 \to \GL_4$ and $\GL_2\times\GL_3 \to \GL_6$ \cite[Theorem 6.4]{Kim2004}. 

Now we turn to the $A_4$ case. Considering the character table of the binary tetrahedral group, we see that the irreducible representations of dimension $>1$ comprise: three two-dimensional representations, isomorphic up to twist and a three-dimensional representation which is isomorphic to the symmetric square of the two-dimensional representations. Automorphy of $\Sym^{n}r_\pi$ therefore follows from automorphy of $\Sym^2r_\pi$. 

Finally, in the $S_4$ case, we consider the character table of the binary octahedral group. The irreducible representations of dimension $> 1$ are:

\begin{itemize} \item two faithful two-dimensional representations $V_1, V_2$, isomorphic up to twist,
	\item a two-dimensional representation induced from a character of the normal index two subgroup,
	\item two three-dimensional representations isomorphic to $\Sym^2V_1$ and its twist,
	\item a four-dimensional representation isomorphic to $\Sym^3V_1$. 
\end{itemize}	
So in this case automorphy of general symmetric powers follows from the automorphy of $\Sym^mr_\pi$ for $m \le 3$. 

If $\pi$ is an automorphic induction from a quadratic field $K$, as in the dihedral case, one can construct $\Sym^{n}\pi$ as an isobaric direct sum of Hecke characters and automorphic inductions of Hecke characters for $K$.
\end{proof}

\bibliographystyle{amsalpha}
\bibliography{sym_II}

\end{document}